\documentclass[aos, preprint]{imsart}

\RequirePackage{amsthm,amsmath,amsfonts,amssymb}
\RequirePackage[numbers]{natbib}
\RequirePackage[colorlinks,citecolor=blue,urlcolor=blue]{hyperref}
\RequirePackage{graphicx}
\usepackage{algorithm, algorithmic}
\usepackage{enumitem}
\usepackage{tikz}
\usetikzlibrary{shapes.multipart,patterns,arrows}

\usepackage{nicematrix}
\NiceMatrixOptions
{
	custom-line ={command= H, tikz= dashed, width= 1mm}, 
	custom-line = {letter= I, tikz= dashed, width= 1mm}, 
}

\startlocaldefs

\numberwithin{equation}{section}
\theoremstyle{plain}
\newtheorem{theorem}{Theorem}
\numberwithin{theorem}{section}
\newtheorem{corollary}[theorem]{Corollary}
\theoremstyle{definition}

\newtheorem{prop}[theorem]{Proposition}

\newtheorem{definition}[theorem]{Definition}

\newtheorem{remark}{Remark}

\def\param{\boldsymbol{{\mathbf{\theta}}}}
\def\Param{\boldsymbol{\Theta}}

\def\blambda{\boldsymbol{\lambda}}

\def\v{\mathbf{v}}
\def\w{\mathbf{w}}

\def\b{\mathbf{b}}
\def\d{\mathbf{d}}

\def\I{\mathbf{I}}
\def\b{\mathbf{b}}

\def\blambda{\boldsymbol{\lambda}}

\def\param{\boldsymbol{\theta}}
\def\Param{\boldsymbol{\Theta}}

\def\<{\langle}
\def\>{\rangle}

\def\A{\mathbf{A}}
\def\B{\mathbf{B}}

\def\0{{\mathbf 0}}

\def\.{\hskip.06cm}

\def\P{{\textup{\textsf{P}}}}

\DeclareMathOperator{\Var}{\textnormal{Var}}

\def\R{\mathbb{R}}

\def\E{\mathbb{E}}
\def\P{\mathbb{P}}

\def\X{\mathbf{X}}

\def\b{\mathbf{b}}

\def\x{\mathbf{x}}

\def\w{\mathbf{w}}

\DeclareMathOperator*{\argmin}{arg\,min}

\endlocaldefs

\usepackage{xcolor}

\begin{document}
	
	\begin{frontmatter}
		\title{Differentially Private Two-Stage \\ Empirical Risk Minimization \\  with Applications to Individualized Treatment Rule}
		\runtitle{Differentially Private Two-Stage Empirical Risk Minimization}
		
		\begin{aug}
			\author[A]{\fnms{Joowon}~\snm{Lee}\ead[label=e1]{jlee2256@wisc.edu}}
			\and
			\author[A,B]{\fnms{Guanhua}~\snm{Chen}\ead[label=e2]{gchen25@wisc.edu}}
			\address[A]{Department of Statistics, University of Wisconsin - Madison
				\printead[presep={,\ }]{e1}}
			
			\address[B]{Department of Biostatistics and Medical Informatics, University of Wisconsin - Madison
				\printead[presep={,\ }]{e2}}
		\end{aug}
		
		\begin{abstract}
			Differential privacy provides a formal framework for releasing statistical estimators that limit how much any single observation can influence the output, by injecting calibrated random noise. We study differentially private estimation in two-stage procedures common in causal inference and individualized treatment rule (ITR) learning, in which data-dependent weights are first estimated to enforce covariate balance and a parameter of interest is then obtained by weighted empirical risk minimization. We propose Differentially Private Two-Stage Empirical Risk Minimization (DP-2ERM), which privatizes the final estimator directly through objective perturbation calibrated to the data-dependent sensitivity of the full pipeline. The analysis combines deterministic weight-perturbation bounds for several covariate-balancing methods (inverse propensity weighting, entropy balancing weighting, and maximum mean discrepancy weighting) with probabilistic sensitivity bounds for the second-stage solution. The resulting calibration is sharper than the natural stage-wise composition baseline, which the same sensitivity analysis supplies as a byproduct. Simulation studies and a benchmark application to ITR learning demonstrate the improved privacy--utility trade-off.
		\end{abstract}
		
		\begin{keyword}
			\kwd{differential privacy}
			\kwd{empirical risk minimization}
			\kwd{individualized treatment rule}
			\kwd{causal inference}
			\kwd{objective perturbation}
			\kwd{covariate balancing weights}
			\kwd{weighted convex optimization}
		\end{keyword}
		
	\end{frontmatter}

	\section{Introduction}
	\label{Introduction}
	
	\subsection{Differential Privacy and Weighted Empirical Risk Minimization} 
	
	Differential privacy (DP) provides a rigorous notion of data protection by requiring that the
	distribution of any released output changes only slightly when the dataset differs in a single entry (Definition~\ref{def:DP}; see, e.g., \citealp{dwork2006calibrating,dwork2014algorithmic}).
	DP mechanisms achieve this by injecting calibrated random noise, with magnitude governed by a
	privacy budget: smaller privacy parameters correspond to stronger protection but typically require
	larger perturbations and hence reduced statistical utility.
	This framework is indispensable in sensitive domains such as healthcare and finance.
	Foundational approaches to private learning for convex \textit{empirical risk minimization} (ERM)  include
	objective perturbation \citep{chaudhuri2011differentially} (see also \citealp{kifer2012private}) and differentially private
	stochastic gradient methods such as DP-SGD \cite{abadi2016deep}. 
	
	Standard DP methods, including DP-ERM, effectively privatize single-stage estimation procedures, meaning that the input data is used once in the whole pipeline by solving the associated optimization problem. However, modern statistical pipelines often involve more complex, multi-stage architectures. For instance, in the ERM framework, sample-specific weights are introduced to form a weighted average of the empirical objectives to account for data heterogeneity, confounding, or selection bias.
	We formalize this class of problems as \textit{two-stage empirical risk minimization} (2ERM), which utilizes the input data in two distinct phases:
	\begin{description}[leftmargin=1.5cm, itemsep=0.1cm]
		\item[Stage 1:] (\textbf{Find optimal sample weights}) Use the data to compute sample-specific weights that satisfy a specific balancing criterion. 
		\item[Stage 2:] (\textbf{Find optimal parameter}) Obtain the estimated parameter of interest by solving a weighted ERM problem with the weights obtained in the first stage. 
	\end{description}
	See Section \ref{sec:problem_formulation} for a precise problem formulation of 2ERM. Note that the data-dependent weights found in the first stage serve as ``nuisance parameters'': they are instrumental for achieving various desired properties of the parameter of interest, yet they are not the primary objects of inference.
	
	A paramount instance of 2ERM—and the primary motivation for this work—is the estimation of \textit{individualized treatment rules} (ITRs) in precision medicine. The goal of ITR is to recommend personalized treatments by leveraging individual covariates to maximize clinical benefit \cite{kosorok2019precision,zhao2012estimating,zhang2012robust,qian2011performance,athey2021policy,wallace2015doubly}. In observational studies, where treatment assignment is not random, this process is inherently two-stage. The first stage must estimate individual-specific weights—typically \textit{inverse probability weighting} (IPW) \cite{rosenbaum1983central}, \textit{maximum mean discrepancy} (MMD) \cite{chen2024robust}, or \textit{entropy balancing weight} (EBW) \cite{hainmueller2012entropy}—to balance covariate distributions between treatment groups and address confounding bias. Then, in the second stage, the optimal treatment rule can be effectively estimated by solving the associated weighted ERM \cite{athey2021policy,tian2014simple,chen2017general}. 
	
	Beyond ITR, the 2ERM structure encapsulates a wide array of problems in statistics and machine learning. In domain adaptation/transfer learning \cite{zhuang2020comprehensive}, importance weighting is the standard remedy for covariate shift, where the first stage estimates density ratios—often via Kernel Mean Matching \cite{gretton2009covariate} or KL-divergence minimization \cite{shimodaira2000improving}—to align source and target distributions. Similarly, in robust learning with noisy labels, the first stage estimates ``cleanliness'' weights \cite{liu2015classification} to down-weight corrupted samples. The framework is equally relevant to algorithmic fairness, where pre-processing techniques reweight datasets to enforce statistical independence between sensitive attributes and labels \cite{kamiran2012data, calmon2017optimized} prior to classification.

	\subsection{Differentially Private Two-Stage ERM} 
	
	The goal of this work is to develop a rigorous and effective framework for \textit{differentially private two-stage ERM} (DP-2ERM), applicable to convex 2ERM pipelines, illustrated here on ITR. Our main application is ITR estimation, and the paper carries out its full theoretical and empirical development in the convex balancing-weight regime most relevant to this setting, while also providing a general analysis blueprint for sequential weighting-then-ERM procedures.  Existing work has studied DP in related settings, including DP \textit{outcome-weighted learning} (OWL) with known propensity weights \cite{spicker2024differentially}, DP weighted ERM under an asymptotic stability assumption that effectively reduces the two-stage sensitivity to that of a single-stage problem \cite{giddens2023differentially}, and DP covariate balancing for scalar \textit{average treatment effect} (ATE) estimation where the weights are released \cite{ohnishi2024differentially}. However, to the best of our knowledge, none provides end-to-end DP guarantees for ITR learning when the balancing weights are data-dependent solutions to a convex program and are not released; see Section~\ref{sec:related_work_DP_CI} for further discussion. Given the sensitivity of patient medical histories, the ability to estimate effective ITRs in a privacy-preserving manner is of substantial practical importance. Developing such a DP-2ERM framework, however, poses several challenges.
	
	First, a natural way to privatize 2ERM is to privatize each stage separately and then invoke a composition theorem. However, applying this strategy directly in our setting is nontrivial, because it would require the second-stage weighted ERM mechanism to satisfy a DP guarantee uniformly over all possible first-stage outputs; see, for example, \cite[Theorem 3.2]{dong2022gaussian} and \cite[Theorem 3.20]{dwork2014algorithmic}. In 2ERM, this uniformity requirement is particularly demanding because the first-stage weights are data-dependent and can vary substantially across neighboring datasets.
	
	Second, calibrating the second stage against such worst-case first-stage outputs can be overly conservative. In extreme cases, the learned weights may place substantial mass on a small subset of observations, making the weighted ERM objective highly sensitive to individual records. Protecting against all such possibilities uniformly would therefore require substantially more noise, leading to a poor privacy--utility trade-off and potentially a substantial loss of statistical utility in the final estimator.
	
	Third, a literal stage-wise composition strategy would also require privatizing the first-stage weight computation itself \cite{niu2022differentially}. However, adding noise directly to the weights can disrupt the constraints they are designed to enforce. For example, in causal inference, weights are constructed to balance covariate distributions between treatment groups in order to mitigate confounding. Even small perturbations for DP can lead to a failure of covariate balance, resulting in biased and unreliable treatment effect estimates.
	
	Lastly, allocating a significant portion of the privacy budget to the first stage is fundamentally inefficient. In the 2ERM paradigm, the sample-specific weights are internal nuisance quantities in the estimation pipeline and are not released. Using composition would therefore expend a limited privacy budget on nuisance parameters rather than on the primary object of inference.
	
	In our DP-2ERM framework, we instead establish end-to-end differential privacy for the full two-stage procedure directly. Specifically, we keep the first stage (weight computation) in its non-private form so that its balancing constraints remain intact, and then apply a tailored version of objective perturbation \cite{chaudhuri2011differentially} suitably extended to the 2ERM setting only to the released second-stage solution. Figure~\ref{fig:DP2ERM}
	summarizes this design, contrasting DP-2ERM with approaches that privatize the first-stage
	weights: the balancing weights remain internal, and objective perturbation is applied only to
	the released second-stage estimator.
	
	\begin{figure}[ht!]
		\centering
		\includegraphics[width=1\textwidth]{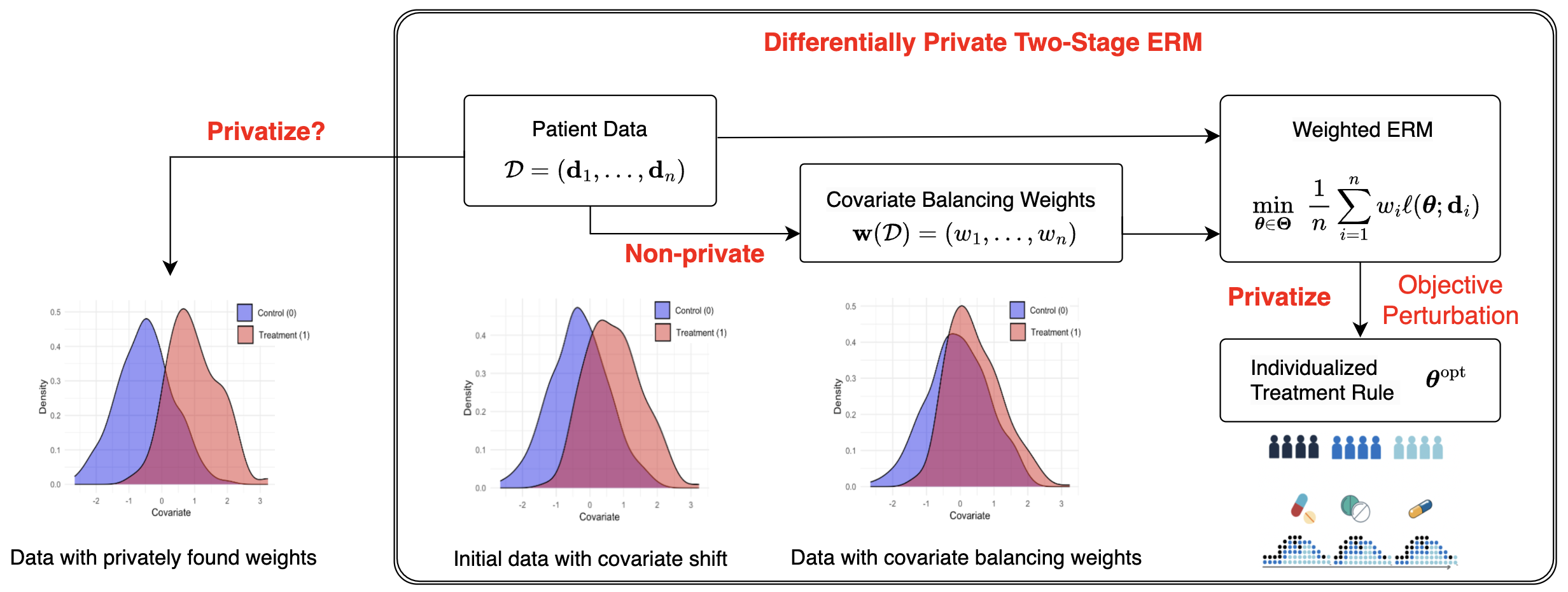}
		\caption{\textit{Differentially Private Two-Stage ERM Framework.} Our approach maintains non-private weight computation in the first stage to effectively address covariate shift, while applying privacy mechanisms exclusively to the second stage where weighted ERM is solved. This design is critical because standard composition theorems require calibrating noise against worst-case sequential dependencies to guarantee privacy, resulting in excessive noise injection that destroys utility. Our method ensures end-to-end differential privacy while preserving the effectiveness of the weights for balancing covariates.} 
		\label{fig:DP2ERM}
		\vspace{-0.1in}
	\end{figure}

	Crucially, 2ERM exhibits a unique amplification of data dependency: a change in a single data point shifts the weights for the entire sample, which in turn perturbs the objective function of the second stage. Because the objective function changes more aggressively than in a typical single-stage setting or a parallel stage problem, the final output is significantly more reactive to individual data points. The primary theoretical contribution of our direct approach is a sharp end-to-end analysis of this amplification effect. It replaces the uniform-in-weights calibration required by stage-wise composition (see Section~\ref{sec:related_work_composition}) with method-specific weight-stability bounds coupling the two stages, yielding a sharpened sufficient condition on the noise level and objective-perturbation regularization required to guarantee $(\varepsilon,\delta)$-DP of the released second-stage estimator.

	\subsection{Contributions}
	
	At a high level, this paper develops DP-2ERM, a differentially private framework
	for convex two-stage ERM pipelines, with ITR learning as the main application.
	Our main contributions are as follows.
	
	\begin{enumerate}[label=(\roman*), itemsep=0.1cm]
		
		\item \textbf{A sensitivity bound for objective perturbation with data-dependent weights
			(Theorem~\ref{thm:sensitivity_objective_perturbation}).}
		We prove an explicit sensitivity bound for objective perturbation in two-stage
		weighted ERM. In contrast to the standard single-stage setting, a one-record
		perturbation changes the second-stage objective both directly through the changed
		observation and indirectly through the upstream weights $\w(\mathcal D)$. Our
		theorem captures this indirect dependence through two weight-perturbation
		summaries, $W_1$ and $W_2$, and converts bounds on these quantities into
		sufficient calibrations for the noise level and regularization under both Gamma
		($\varepsilon$-DP) and Gaussian (($\varepsilon,\delta)$-DP) objective perturbation.
		The analysis also covers weighted losses with arbitrary-rank per-sample Hessians
		controlled by a trace bound, using a determinant perturbation inequality for
		weighted sums of positive semidefinite matrices.
		
		\item \textbf{Deterministic stability bounds for standard weighting schemes.} We establish non-asymptotic, deterministic stability bounds for the first-stage weight maps of IPW in randomized trials (Proposition~\ref{prop:stability_IPW_randomized_trial}), IPW with known and estimated logistic regression parameters (Propositions~\ref{prop:IPW_known_beta} and \ref{prop:IPW_estimated_stability}), MMD balancing (Proposition~\ref{prop:MMD_stability}), and entropy balancing (Theorem~\ref{thm:entropy_weight_stability}). Together with (i), these bounds yield method-specific DP guarantees; separately, they are of independent statistical interest as stability results for widely used covariate-balancing programs.
		
		\item \textbf{End-to-end privacy, utility, and consistency guarantees.} Plugging the stability results of (ii) into the sensitivity inequality of (i) yields noise and regularization calibrations that improve over the universal worst-case implied by stage-wise composition. We further bound the suboptimality gap of the private estimator relative to the non-private optimum (Theorem~\ref{thm:utility}), and show that under favorable weight-stability regimes ($\overline{W}_1,\overline{W}_2=O(\sqrt{n})$) DP-2ERM attains standard parametric rates---$O_\P(n^{-1/2})$ in parameter and $O_\P(n^{-1})$ in excess objective under strong convexity---so that privacy is obtained without degrading the asymptotic statistical rate (Theorem~\ref{thm:consistency}).
	\end{enumerate}
	
	We apply DP-2ERM to ITR learning and evaluate it in simulations and benchmark applications. In the simulations and benchmark application we report, DP-2ERM improves the privacy--utility trade-off relative to worst-case private baselines motivated by composition-style calibration, and more stable optimization-based balancing weights, particularly EBW, often lead to better private utility than standard IPW.  In addition, we show in Appendix  that the same weight-stability analysis yields differentially private estimation of ATE via a plug-in specialization.

	\section{Background and Related Work}
	
	\subsection{Differential Privacy}
	
	The mathematical framework of differential privacy was first introduced by Dwork, McSherry, Nissim, and Smith in \cite{dwork2006calibrating} to provide a rigorous foundation for privacy-preserving data analysis. Here, we provide a brief exposition of its core definitions. 
	
	Consider a randomized mechanism $\mathcal{M}$ that produces an output $\mathcal{M}(\mathcal{D})$ from a given dataset $\mathcal{D}$. The core principle of differential privacy is to ensure that the output distribution does not depend sensitively on the presence of any single individual, thereby protecting sensitive information against adversarial inference. This requirement can be viewed as distributional robustness under single data perturbations: the probability distributions of $\mathcal{M}(\mathcal{D})$ and $\mathcal{M}(\mathcal{D}')$ must remain statistically similar whenever $\mathcal{D}$ and $\mathcal{D}'$ are datasets of the same size $n$ that differ in exactly one entry. The following definition of $(\varepsilon, \delta)$-differential privacy formalizes this intuition \cite{dwork2009differential}.
	
	\begin{definition}[Differential Privacy]\label{def:DP}
		A randomized mechanism $\mathcal{M}$ satisfies \textit{$(\varepsilon, \delta)$-differential privacy} if, for any two datasets  $\mathcal{D}$ and $\mathcal{D}'$ of the same size $n$ that differ in exactly one entry, and for any measurable subset $S$ of the output space of $\mathcal{M}$, it holds that
		\begin{align}
			\P(\mathcal{M}(\mathcal{D}) \in S) \leq e^{\varepsilon} \P(\mathcal{M}(\mathcal{D}') \in S) + \delta.
		\end{align}
	\end{definition}
	
	The parameter $\varepsilon$ represents the privacy budget, quantifying the maximum allowable divergence between the output distributions of neighboring datasets; smaller values of $\varepsilon$ indicate stronger privacy. The parameter $\delta$ acts as a failure probability, relaxing the strict requirement of $\varepsilon$-DP by introducing an additive error term.
	
	\subsection{Differentially Private Empirical Risk Minimization}
	
	Chaudhuri, Monteleoni, and Sarwate \cite{chaudhuri2011differentially} introduced \emph{objective perturbation} for differentially private ERM in the standard (unweighted) setting, combining $L_{2}$-regularization with a random linear term added to the objective to guarantee differential privacy of the minimizer of the perturbed objective.
	
	However, the original objective perturbation analysis imposes restrictive regularity conditions, including convexity, differentiability, and strong convexity (typically enforced via $L_{2}$-regularization). As a result, it does not directly accommodate common nonsmooth formulations such as $L_{1}$-regularization for sparsity or other constrained/nonsmooth objectives without additional technical modifications. To address such settings, Kifer et al. \cite{kifer2012private} developed extensions that handle constrained optimization while maintaining privacy guarantees, and they also demonstrated how to leverage $(\varepsilon,\delta)$-DP with Gaussian perturbations. 
	
	A key limitation for our purposes is that these seminal works focus on equal-weight (unweighted) ERM. In contrast, 2ERM involves \emph{data-dependent} weights in the second-stage objective, creating a tighter coupling between the data, the weight map, and the final estimator. This coupling is common in causal inference, where data-dependent reweighting is routine—for example, through covariate balancing weights used to learn individualized treatment rules.

	\subsection{Covariate Balancing Weights and ITR}
	
	A motivating instance of 2ERM is learning ITRs from observational data. In observational studies, treatment assignment is confounded, so the treated and control groups typically differ in their covariate distributions; without adjustment, learning an ITR by na\"ive (unweighted) risk minimization can be biased toward regions of the covariate space that are over-represented in one arm.

	A standard approach is IPW, which weights each sample by the inverse of its propensity score. This requires specifying (and estimating) a propensity score model, and the resulting weights can be sensitive to model misspecification and limited overlap. For this reason, methods that directly estimate weights based on balance-seeking objectives have been proposed. These approaches include EBW \citep{hainmueller2012entropy}, stable balancing weights \citep{zubizarreta2015stable} as well as distributional covariate balancing weights \citep{huling2020energy, kallus2020generalized}. Distributional covariate balancing weights are modern alternatives to IPW that directly minimize a discrepancy between the empirical covariate distributions induced by the weights, thereby targeting covariate balance without committing to a parametric propensity model.
	
	All of the aforementioned covariate balancing weights 
	are computed by solving convex optimization problems of the form described in the first stage in \eqref{eq:find_optimal_weight}. These weights, once computed, are then used to formulate a weighted ERM problem in the second stage \eqref{eq:weighted_convex_opt} to estimate an ITR within a chosen decision-function class.
	
	This two-stage structure is precisely what motivates our 2ERM framework and our privacy analysis: a perturbation of one record can change the entire weight vector computed in the first stage, which in turn affects the sensitivity of the second-stage solution. We make the reduction to 2ERM explicit in Section~\ref{sec:running_example_ITR} and return to the private ITR instantiation in Section~\ref{sec:ITR}.

	\subsection{Related Work on Differentially Private Causal Inference and ITR Learning}
	\label{sec:related_work_DP_CI}
	
	We briefly review related work on differentially private causal inference and policy learning, none of which directly addresses the regime targeted by DP-2ERM in which optimization-based balancing weights are data-dependent solutions to convex programs, are not released, and feed into a second-stage weighted ERM under observational confounding.
	
	Niu et al. \cite{niu2022differentially} proposed a differentially private procedure for heterogeneous effect estimation that splits the data into three parts, privately estimates balancing weights and an outcome regression using budget $\varepsilon/3$ each, and then estimates the \textit{conditional average treatment effect} (CATE) using the remaining $\varepsilon/3$. Their approach is tailored to parametric weighting schemes, such as IPW based on a propensity model, and does not directly accommodate optimization-based balancing weights such as EBW and distributional balancing weights, which are typically obtained by solving convex programs. In addition, privatizing the first stage can degrade covariate balance, and allocating privacy budget across intermediate releases may lead to conservative noise levels and reduced utility.
	
	Giddens et al.\ \cite{giddens2023differentially} study weighted ERM under output perturbation and include OWL as an application. While conceptually related to our framework, their analysis relies on an asymptotic stability condition under which the sensitivity of the weighted ERM problem reduces to that of a standard unweighted ERM. Our setting is different: the main challenge is precisely to quantify how perturbations propagate through data-dependent balancing weights obtained from a first-stage optimization problem. In addition, their empirical focus is primarily on randomized clinical trials, where propensity scores are typically fixed and known.
	
	Spicker, Moodie, and Shortreed \cite{spicker2024differentially} study DP in the context of OWL for dynamic treatment regimes. Their approach privatizes a single-stage weighted SVM classifier, treating the propensity score weights as fixed and known inputs. As a result, their analysis does not confront the two-stage sensitivity amplification that is central to DP-2ERM.
	
	The closest concurrent work is Ohnishi and Awan \cite{ohnishi2024differentially}, who also consider a two-stage procedure involving covariate balancing weights. However, their target estimand is the ATE, a scalar quantity, rather than an ITR, and their framework privatizes and releases the weights themselves as an intermediate output. In contrast, DP-2ERM treats the balancing weights as internal nuisance parameters that are not released and establishes privacy directly for the final released estimator. We return in Section~\ref{sec:related_work_composition} to the specific limitations of stage-wise composition in the 2ERM setting.
	
	\subsection{Limitations of Stage-wise Composition for 2ERM}
	\label{sec:related_work_composition}
	
	A natural DP baseline for 2ERM is to privatize each stage separately and combine the guarantees by composition \citep{chaudhuri2011differentially, dwork2010boosting}. Concretely, one computes a private weight vector in the first stage with parameters $(\varepsilon_{1},\delta_{1})$ (e.g., via objective perturbation or noisy SGD), then solves the second-stage weighted ERM with an independent mechanism satisfying $(\varepsilon_{2},\delta_{2})$-DP. Standard composition yields an overall guarantee of at least $(\varepsilon_{1}+\varepsilon_{2},\delta_{1}+\delta_{2})$ \citep{dwork2006calibrating, dwork2009differential}. Sharper accounting methods (e.g., \citealp{dwork2010boosting, kairouz2015composition, dong2022gaussian}) improve constants and scaling under repeated compositions, but they still effectively require the second stage to be private \emph{uniformly} over all possible first-stage outputs. In 2ERM, those first-stage outputs are precisely the data-dependent weights entering the second-stage objective, so the required uniform guarantee is over a potentially large and highly heterogeneous class of weight vectors.
	
	A literal stage-wise composition strategy would also require privatizing the first-stage weight computation itself, even though these weights are internal nuisance quantities and are not released. In settings such as causal inference and ITR learning, perturbing the weights can directly undermine the covariate balancing constraints they are meant to enforce, which motivates the direct formulation we pursue in later sections, where privacy noise is injected only into the released second-stage estimator while the balancing role of the first stage is preserved.
	
	Realizing such a literal stage-wise composition baseline also requires a DP calibration for the second stage: weighted ERM with data-independent (fixed) weights. Existing DP-ERM results \citep{chaudhuri2011differentially,kifer2012private} cover equal-weight ERM under rank-$\le 2$ per-sample Hessians and do not directly provide such a calibration. The fixed-weight DP-ERM calibration used for the composition baseline in Section~\ref{sec:simulation} is therefore obtained as a corollary of our sensitivity analysis---which extends objective perturbation to weighted losses with arbitrary-rank Hessians controlled by the trace bound $\lambda$---combined with standard DP logistic regression \citep{chaudhuri2011differentially} for the first-stage fit.
	
	\section{Statement of Results}
	
	\subsection{Notations}\label{subsection:notation}
	
	In this paper, we use the notation $\R^{p}$ to represent the ambient space for data, equipped with the standard inner product $\langle\cdot, \cdot \rangle$, inducing the Euclidean norm $\lVert \cdot \rVert$. For each integer $n\ge 1$, $\I_{n}$ denotes the $n\times n$ identity matrix. We denote its Frobenius, operator (2-), and supremum norm by $\lVert \A \rVert_{F}^{2} := \sum_{i,j} a_{ij}^{2},  \lVert \A \rVert_{2} := \sup_{\x\in \R^{n},\, \lVert \x \rVert=1} \, \lVert \A\x \rVert,  \lVert \A \rVert_{\infty}:= \max_{i,j} |a_{ij}|$, respectively. For square symmetric matrices $\A,\B\in \R^{n\times n}$, $\A \preceq \B$ indicates that $\v^{\top}\A\v \le \v^{\top}\B\v $ holds for all unit vectors $\v\in \R^{n}$. For two scalars $a,b$, we denote $a\lor b = \max \{ a,b\}$ and $a\land b=\min\{a,b \}$, and we define the positive part of a scalar $u$ as $(u)_+ := u \lor 0$. For two datasets $\mathcal{D}$ and $\mathcal{D}'$, we write $\mathcal{D} \sim \mathcal{D}'$ to indicate that they are \textit{neighbors}, i.e., they differ in exactly one entry.
	
	\subsection{Problem Formulation: Two-Stage ERM}\label{sec:problem_formulation}
	Before presenting our private algorithm, we formally define the general 2ERM framework. This paradigm involves two-stage optimization problems defined as follows:
	
	\begin{enumerate}[label=(\roman*)]
		\item (\textbf{Find optimal sample weights}) Let $\Delta^{n}$ denote an $n$-dimensional probability simplex. Fix a closed convex parameter space $\Xi$ and a function $F:\Xi\rightarrow n\Delta^{n}$ that maps a parameter $\blambda \in \Xi$ to a nonnegative weight vector $F(\blambda)$ of total sum $n$.
		
		Find an optimal weight vector $\w(\mathcal{D})\in n\Delta^{n}$ for the given data $\mathcal{D}:=(\d_{1},\dots,\d_{n})$ by solving the following convex optimization problem
		\begin{align}\label{eq:find_optimal_weight}
			\w=\w(\mathcal{D}):=F(\blambda^{*}(\mathcal{D})), \quad \textup{where}\quad \blambda^{*}(\mathcal{D}) = \argmin_{\blambda\in \Xi} \,\,\mathcal{E}(\blambda; \mathcal{D}),
		\end{align}
		where $\mathcal{E}(\,\cdot\,;\mathcal{D})$ is a convex loss function depending on the observed data $\mathcal{D}$. (We allow trivial parameterization with $F=\textup{identity}$, e.g., MMD, see Section \ref{sec:MMD}.)
		
		\item (\textbf{Find optimal parameter}) Write $\w(\mathcal{D})=(w_{1},\dots,w_{n})$ in \eqref{eq:find_optimal_weight}. 
		Find the optimal parameter $\param$ by solving the following weighted empirical risk minimization
		\begin{align}\label{eq:weighted_convex_opt}
			\param^{\textup{opt}}(\mathcal{D}) = \argmin_{\param\in \Param} \, \left[ \mathcal{L}(\param; \mathcal{D}):= \frac{1}{n}\sum_{i=1}^{n} w_{i}  \cdot \ell(\param; \d_{i})  + R(\param) \right],
		\end{align}
		where $\ell(\,\cdot\,; \d_{i})$ is a convex per-sample loss function, $\Param\subseteq \R^{p}$ is a closed convex constraint set, and $R(\cdot)$ is a convex regularizer. 
	\end{enumerate}
	
	\subsection{Proposed Method: The DP-2ERM Algorithm}
	
	Recall that the standard ERM corresponds to the case where $\w(\mathcal{D})\equiv \mathbf{1}$. In 2ERM, the observed data $\mathcal{D}$ is used sequentially. To emphasize this dependence, we denote the optimal parameter as $\param^{\textup{opt}}(\mathcal{D})  = \param^{\textup{opt}}(\mathcal{D}; \w(\mathcal{D}))$.
	
	To solve the 2ERM problem under differential privacy constraints, we utilize objective perturbation introduced in \cite{chaudhuri2011differentially}, which is to add an $L_2$-regularization along with a random linear term to $\mathcal{L}(\theta;\mathcal{D})$:
	\begin{align}\label{eq:weighted_convex_opt_DP}
		\param^{\textup{priv}}&= \param^{\textup{priv}}(\mathcal{D},\b) = \argmin_{\param\in \Param} \, \left[ \mathcal{L}^{\textup{priv}}(\param; \mathcal{D},\b):= 
		\mathcal{L}(\param;\mathcal{D}) + \frac{\gamma}{2} \| \param \|_{2}^{2} + \frac{\langle \b, \param \rangle}{n} \right].
	\end{align}
	Here, $\b$ is a random noise vector sampled from a distribution (e.g., Gamma \cite{chaudhuri2011differentially} or Gaussian \cite{kifer2012private}) with scale explicitly calibrated to override the weight stability derived in our analysis, and $\gamma$ is a regularization parameter. The complete procedure is detailed in Algorithm \ref{algorithm:main}.
	
	\begin{algorithm}[H]
		\caption{DP-2ERM}
		\label{algorithm:main}
		\begin{algorithmic}[1]
			\REQUIRE Dataset $\mathcal{D}:=(\d_{1},\dots,\d_{n})$; privacy parameters \( \varepsilon \) and \( \delta \) (\( \delta = 0 \) for \(\varepsilon\)-differential privacy); closed and convex domain  $\Param \subseteq \mathbb{R}^{p}$; convex regularizer \( R(\param) \); convex loss function $\w\mapsto \mathcal{H}(\w;\mathcal{D})$ for weights; convex loss function $\param\mapsto \ell(\param;\d)$ for parameters with continuous Hessian; bound \( \zeta \) such that \( \|\nabla_{\param} \ell(\param; \d)\|_2 \leq \zeta \) for all \( \d \) and \( \param \in \Param\), and upper bound \( \lambda \) on the trace of Hessian \( \nabla^2_{\param} \ell(\param; \d) \) for all \( \d \) and \( \param \in \Param\).
			\ENSURE Private parameter estimate \( \param^{\text{priv}}  \in \Param \)
			\STATE Non-private optimal weights: $\w(\mathcal{D})=(w_{1},\dots,w_{n})\leftarrow \argmin_{\w\in n \Delta^{n}} \, \mathcal{H}\left(\w; \mathcal{D}\right)$ 
			\IF{\(\varepsilon\)-differential privacy is required}
			\STATE Sample \( \b \in \mathbb{R}^p \) from  Gamma distribution with density: $ \nu_{1}(\b; \beta) \propto e^{- \beta \|\b\|_2}$
			\ELSIF{\((\varepsilon, \delta)\)-differential privacy is required}
			\STATE Sample \( \b \in \mathbb{R}^p \) from  Gaussian distribution: $\displaystyle  \nu_{2}(\b; \sigma^{2}) = \mathcal{N}\left(\mathbf{0}, \sigma^2 \I\right)$ ($\sigma^2$ calibrated from the weight-stability bound; see Theorem~\ref{thm:stability_weights_gen} below)
			\ENDIF
			\STATE Private optimal parameter: $\param^{\text{priv}}\leftarrow \arg\min_{\param \in \Param} \frac{1}{n}\sum_{i=1}^{n} w_{i} \, \ell(\param; \d_{i})  + R(\param)  + \frac{\gamma}{2 } \|\param\|_2^2 + \frac{\langle \b, \param \rangle}{n} $  
			\RETURN \( \param^{\text{priv}} \)
		\end{algorithmic}
	\end{algorithm}
	
	To implement Algorithm~\ref{algorithm:main}, the practitioner must specify three ingredients: (i) assumptions on the covariate domain and loss function, from which $\zeta$ and $\lambda$ are obtained; (ii) a choice of weighting method together with an associated stability bound, from which $\overline{W}_{1}$ and $\overline{W}_{2}$ are obtained via Theorems~\ref{thm:stability_weights_gen} and~\ref{thm:DP_guarnatee}; and (iii) target privacy parameters $(\varepsilon,\delta)$, which in turn determine the calibrated noise level and regularization parameter $\gamma$ through Theorem~\ref{thm:DP_guarnatee}.

	\subsection{General Results on DP-2ERM}
	
	The core theoretical contribution of our analysis is the following probabilistic data sensitivity bound stated in Theorem \ref{thm:sensitivity_objective_perturbation} that characterizes the distributional perturbation of the private estimator $\param^{\textup{priv}}$ due to a change in a single data point.

	A key challenge arises from the algorithm's two-stage structure: changing a single data point ($\mathcal{D}\mapsto \mathcal{D}$') alters the private estimator $\param^{\textup{priv}}$  through two distinct channels—first by changing the weights in the first stage ($\w(\mathcal{D})\mapsto \w(\mathcal{D}')$), and second by directly affecting the training data in the weighted ERM in the second stage ($\mathcal{D}\mapsto \mathcal{D}'$). Consequently, the overall statistical property of the private estimator $\param^{\textup{priv}}(\mathcal{D}')$ critically depends on how much the weights are perturbed and also how sensitive the solution of the weighted ERM is to the input data. Our general data sensitivity bound in Theorem \ref{thm:sensitivity_objective_perturbation} formalizes this dependency, establishing a precise relationship between data perturbation and the resulting final private estimator $\param^{\textup{priv}}$. Proofs of results stated in this subsection are provided in Appendix,
	unless stated otherwise.

	\begin{theorem}[Data Sensitivity Bound for DP-2ERM via Objective Perturbation]\label{thm:sensitivity_objective_perturbation}
		Fix size-$n$ datasets $\mathcal{D}$ and $\mathcal{D}'$ that differ by at most one point and let $\w=(w_{1},\dots,w_{n})$ and $\w'=(w_{1}',\dots,w_{n}')$ be the non-private optimal sample weights solving \eqref{eq:find_optimal_weight} for the datasets $\mathcal{D}$ and $\mathcal{D}'$, respectively. Define $W_{1}=W_{1}(\mathcal{D},\mathcal{D'})$ and $W_{2}=W_{2}(\mathcal{D},\mathcal{D'})$ by 
		\begin{align}\label{eq:def_W1_W2}
			W_{1}&:=  \| \w-\w' \|_{1} + 2\max_{1\le i \le n}(w_{i}\land w_{i}'), \\ \nonumber
			W_{2}&:= \sqrt{\left(\|\w-\w' \|^{2}_{2}+ 2\max_{1\le i \le n} w_{i}w_{i}'\right)\left(1+\|\w-\w'\|_{0} \right)}.
		\end{align}
		Suppose 
		there exist $\lambda,\zeta$ such that $\textup{tr}(\nabla^{2} \ell(\param;\d))\le \lambda$ and  $ \|\nabla \ell(\param; \d)\|_2 \leq \zeta$ for all $\param\in \Param$ and $\d\in \mathcal{D}\cup \mathcal{D}'$.  
		Then for Algorithm \ref{algorithm:main} and for each Borel measurable set $\Param_{B}\subseteq \R^{p}$, we have  
		\begin{enumerate}[label=(\roman*)]
			\item With \textbf{Gamma-radial noise}
			$f_{\b}(\x)\propto \exp(-\beta \| \x \|_{2})$ (often informally called \textbf{Gamma noise} since $\|\b\|_{2}$ is Gamma-distributed under this law), for each Borel measurable set $\Param_{B}\subseteq \R^{p}$, 
			\begin{align}\label{eq:thmA1_gamma}
				\qquad   \P_{\mathcal{D}}(\param^{\textup{priv}}\in \Param_{B}) &\le \exp\left(  \beta  \zeta W_{1} +  \frac{\lambda}{\gamma n} W_{2}   \right) \P_{\mathcal{D}'}(\param^{\textup{priv}}\in \Param_{B}). 
			\end{align}
			\item With \textbf{Gaussian noise} $\b\sim \mathcal{N}(\mathbf{0},\sigma^{2}\I)$, for each $\delta>0$ and Borel measurable set $\Param_{B}\subseteq \R^{p}$, 
			\begin{align}\label{eq:thmA1_gaussian}
				\qquad \P_{\mathcal{D}}(\param^{\textup{priv}}\in \Param_{B})  &\le  \exp\left( \bar{\rho} W_{1}+ \frac{\lambda}{\gamma n} W_{2}   \right)  \P_{\mathcal{D}'}(\param^{\textup{priv}}\in \Param_{B}) + \delta, \nonumber \\
				\qquad \text{where } \bar{\rho} &:= \frac{1}{2\sigma^{2}} \left(2\zeta \sigma  \sqrt{ \left(\sqrt{p}+\sqrt{\log \delta^{-1}}\right)^{2} + \log \delta^{-1}}  + \zeta^{2}  \right).
			\end{align}

		\end{enumerate}   
		
	\end{theorem}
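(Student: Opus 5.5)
The argument follows the Chaudhuri--Monteleoni--Sarwate change-of-variables proof of objective perturbation, adapted to the data-dependent weights. For simplicity take $\Param=\R^{p}$ and $R$ smooth; the constrained and nonsmooth cases are then handled as in Kifer et al.\ by working with the normal cone and subgradients. The key observation is that the first stage is deterministic, so $\w=\w(\mathcal D)$ and $\w'=\w(\mathcal D')$ are fixed vectors, and all randomness comes from $\b$.

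\textbf{Step 1: invert the noise map.} Since the perturbed objective is $\gamma$-strongly convex, for each $\param$ there is exactly one noise vector producing it, namely
\begin{align*}
\b(\param;\mathcal D)=-n\Big(\tfrac1n\textstyle\sum_i w_i\nabla\ell(\param;\d_i)+\nabla R(\param)+\gamma\param\Big).
\end{align*}
This map is a bijection, and its Jacobian is $-n\big(\A+\gamma\I\big)$, where $\A=\tfrac1n\sum_i w_i\nabla^2\ell(\param;\d_i)+\nabla^2R(\param)$. The density of $\param^{\textup{priv}}$ under $\mathcal D$ is therefore $\nu(\b(\param;\mathcal D))\,|\det(n(\A+\gamma\I))|$, and likewise under $\mathcal D'$ with $\A'$. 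We bound the pointwise ratio of these two densities and then integrate over $\Param_B$.

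\textbf{Step 2: the noise ratio.} The difference of the two noise vectors is
\begin{align*}
\b(\param;\mathcal D)-\b(\param;\mathcal D')=\textstyle\sum_i\big(w_i'\nabla\ell(\param;\d_i')-w_i\nabla\ell(\param;\d_i)\big).
\end{align*}
Suppose the changed index is $j$.
\begin{itemize}
\item For $i\ne j$ we have $\d_i=\d_i'$, so the $i$-th term has norm at most $|w_i-w_i'|\zeta$.
\item For $i=j$, write $w_j=(w_j\wedge w_j')+(w_j-w_j\wedge w_j')$, and similarly for $w_j'$. The norm is then at most $\zeta|w_j-w_j'|+2\zeta(w_j\wedge w_j')$.
\end{itemize}
Summing gives $\|\Delta\b\|\le\zeta W_1$. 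For Gamma-radial noise, the triangle inequality then gives a noise ratio of at most $e^{\beta\zeta W_1}$.

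For Gaussian noise, the log-ratio equals $(\|\b'\|^2-\|\b\|^2)/(2\sigma^2)$, which is at most $(2\|\b\|\,\|\Delta\b\|+\|\Delta\b\|^2)/(2\sigma^2)$. On the event that $\|\b\|$ exceeds the chi-norm tail $\sigma\sqrt{(\sqrt p+\sqrt{\log\delta^{-1}})^2+\log\delta^{-1}}$ we pay $\delta$; this event has probability at most $\delta$ by Laurent--Massart or Gaussian Lipschitz concentration. Off that event, $\|\Delta\b\|\le\zeta W_1$ together with $W_1\ge$ a suitable normalization bounds the exponent by $\bar\rho W_1$. Matching $\bar\rho$ exactly needs care, because $\zeta^2W_1^2$ must be absorbed as $\zeta^2W_1$. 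If the intended convention has $W_1\le1$ after scaling, I would check that; otherwise the bad set should be defined in terms of $\langle\b,\Delta\b\rangle$ so that it is truly one-dimensional.

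\textbf{Step 3: the Jacobian ratio, the main obstacle.} Write $\A'=\A+E$, where
\begin{align*}
E=\tfrac1n\textstyle\sum_{i\ne j}(w_i'-w_i)\nabla^2\ell_i+\tfrac1n\big(w_j'\nabla^2\ell(\cdot;\d_j')-w_j\nabla^2\ell(\cdot;\d_j)\big).
\end{align*}
We need
\begin{align*}
\frac{\det(\A'+\gamma\I)}{\det(\A+\gamma\I)}=\det\big(\I+(\A+\gamma\I)^{-1}E\big)\le\exp\big(\lambda W_2/(\gamma n)\big),
\end{align*}
and the same bound in the reverse direction. Using $\log\det(\I+M)\le\operatorname{tr}M$ only for the positive part of $E$, together with $(\A+\gamma\I)^{-1}\preceq\gamma^{-1}\I$ and the trace bound, yields $\lambda\big(\|\w-\w'\|_1+w_j'\big)/(\gamma n)$, which is an $\ell_1$-type bound. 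The stated $W_2$ is instead an $\ell_2$ quantity times $\sqrt{1+\|\w-\w'\|_0}$, which suggests a Cauchy--Schwarz step over the support of $\w-\w'$ plus the changed index. The plan is:
\begin{itemize}
\item bound $\log\det(\I+M)\le\operatorname{tr}(M)$ with $M=(\A+\gamma\I)^{-1/2}E(\A+\gamma\I)^{-1/2}$;
\item split $\operatorname{tr}(M)$ into at most $1+\|\w-\w'\|_0$ rank-free summands, each bounded by $\lambda|c_i|/(\gamma n)$ with coefficients $c_i=w_i'-w_i$, plus a cross term for $j$ controlled by $2w_jw_j'$;
\item apply Cauchy--Schwarz, $\sum|c_i|\le\sqrt{(\sum c_i^2+2\max_i w_iw_i')(1+\|\w-\w'\|_0)}$.
\end{itemize}
This is the ``determinant perturbation inequality for weighted sums of PSD matrices'' mentioned in the contributions. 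Getting the $j$-term to appear as $2w_jw_j'$ rather than $w_j+w_j'$ is the delicate point: I would handle it by comparing both determinants to the intermediate matrix with the $j$-th term removed.

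\textbf{Step 4: combine.} Multiply the two ratios and integrate over $\Param_B$, adding $\delta$ in the Gaussian case. This gives \eqref{eq:thmA1_gamma} and \eqref{eq:thmA1_gaussian}.
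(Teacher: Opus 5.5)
Your Steps 1--2 are correct as computations. The Laurent--Massart event $\|\b\|\le\sigma\tilde L$, with $\tilde L:=\sqrt{(\sqrt{p}+\sqrt{\log\delta^{-1}})^{2}+\log\delta^{-1}}$, is exactly the right tail for this problem. With Step 3 completed as below, Steps 1--3 prove part (i).

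\textbf{The gap: part (ii).} The genuine gap is in part (ii), and neither repair you suggest can close it.
A convention with $W_1\le 1$ is impossible. Since $\sum_i(w_i\wedge w_i')=n-\tfrac12\|\w-\w'\|_1$, one has $\max_i(w_i\wedge w_i')\ge 1-\|\w-\w'\|_1/(2n)$, hence $W_1\ge 2$ always.
A one-dimensional bad event built from $\langle\b,\Delta\b\rangle$ could at best improve the cross term. Even that is delicate, because $\Delta\b$ is evaluated at $\param^{\textup{priv}}$ and so depends on $\b$. It leaves untouched the deterministic term $\|\Delta\b\|^{2}/(2\sigma^{2})$, which can equal $\zeta^{2}W_1^{2}/(2\sigma^{2})$.

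In fact (ii) is false as written. Take the following instance.
\begin{itemize}
\item Set $p=1$, $\Param=\R$, $R\equiv0$, and $\ell(\param;d)=-d\param$ with $d\in\{-1,1\}$, so $\zeta=1$ and $\lambda=0$.
\item Use constant weights $\w=\w'=\mathbf{1}$, so $W_1=2$.
\item Let $\mathcal{D}=(1,\dots,1)$, and let $\mathcal{D}'$ equal $\mathcal{D}$ except that its last entry is $-1$.
\end{itemize}
Then $\param^{\textup{priv}}=(n-b)/(\gamma n)$ under $\mathcal{D}$ and $(n-2-b)/(\gamma n)$ under $\mathcal{D}'$. For $\Param_B=\{\param>1/\gamma\}$ the left side of (ii) is $1/2$. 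The right side is $e^{2\bar\rho}\,\P(b<-2)+\delta\le\tfrac12\exp(2\tilde L/\sigma-1/\sigma^{2})+\delta$. This is below $1/2$ as soon as $\sigma\le1/(4\tilde L)$ and $\delta<1/4$.

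What your argument actually proves is (ii) with $\zeta^{2}$ replaced by $\zeta^{2}W_1$ inside $\bar\rho$. That is, the exponent is $\zeta\tilde L W_1/\sigma+\zeta^{2}W_1^{2}/(2\sigma^{2})+\lambda W_2/(\gamma n)$. Correspondingly, the calibration in Theorem~\ref{thm:DP_guarnatee}(ii) needs $\sqrt{\tilde L^{2}+\varepsilon}$ in place of $\sqrt{\tilde L^{2}+\varepsilon/\overline{W}_1}$. State and prove that corrected version rather than trying to reach the given $\bar\rho$.

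\textbf{Step 3 is easier than you expect.} It needs neither an intermediate matrix nor a new determinant inequality.
With $E=\A'-\A$, the bound $\log\det(\I+M)\le\textup{tr}\,M$ gives $\textup{tr}((\A+\gamma\I)^{-1}E)$. Since $0\le\textup{tr}((\A+\gamma\I)^{-1}H)\le\lambda/\gamma$ for every per-sample Hessian $H$, dropping negative terms leaves $\frac{\lambda}{\gamma n}\bigl(\sum_{i\ne j}(w_i'-w_i)_+ +w_j'\bigr)$.

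Your relaxation of this to $\|\w-\w'\|_1+w_j'$ is slightly too crude to imply $W_2$. It exceeds $W_2$ when $n$ is even, $\w=2\cdot\mathbf{1}_{\{i\le n/2\}}$, $\w'=2\cdot\mathbf{1}_{\{i>n/2\}}$ and $j>n/2$.
The missing step is only the split $w_j'\le|w_j'-w_j|+(w_j\wedge w_j')\le|w_j'-w_j|+\sqrt{w_jw_j'}$. Cauchy--Schwarz over the at most $1+\|\w-\w'\|_0$ resulting nonzero summands then gives $\sqrt{(1+\|\w-\w'\|_0)(\|\w-\w'\|_2^{2}+w_jw_j')}\le W_2$. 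The reverse ratio follows by symmetry.

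Using $\sum_i(w_i'-w_i)=0$, the same sum in fact equals $\tfrac12\|\w-\w'\|_1+(w_j\wedge w_j')$. So the Jacobian exponent is at most $\frac{\lambda}{\gamma n}\min\{W_2,\tfrac12W_1\}$.
Both ratio bounds are free of $R$ and of the Hessian ranks. They are therefore compatible with the smoothing-and-limit reduction of the constrained, nonsmooth case that you defer to Kifer et al., and (i) follows.
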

	
	\noindent\textit{Proof of Theorem~\ref{thm:sensitivity_objective_perturbation}.}
	See Appendix.
	
	From the above result, one can immediately deduce a sufficient noise level and $L_{2}$-regularization in the objective perturbation to ensure differential privacy of the entire DP-2ERM procedure. Indeed, for the Gamma noise, we can easily guarantee $\varepsilon$-DP of Algorithm \ref{algorithm:main} by choosing both the noise level $\beta^{-1}$
	and the $L_{2}$-regularization parameter $\gamma$ large enough so that the exponent in \eqref{eq:thmA1_gamma} is at most $\varepsilon$ for all possible neighboring datasets $\mathcal{D}$ and $\mathcal{D}'$. This gives us the following result on differential privacy guarantee of DP-2ERM.

	\begin{theorem}[Differential Privacy of DP-2ERM]\label{thm:DP_guarnatee}
		Suppose there exist $\lambda,\zeta$ such that $\textup{tr}(\nabla^{2} \ell(\param;\d))\le \lambda$ and  $ \|\nabla \ell(\param; \d)\|_2 \leq \zeta$ for all $\param\in \Param$ and for all feasible feature vectors $\d$. With $W_{i}$'s defined in \eqref{eq:def_W1_W2}, denote  
		\begin{align}
			\overline{W}_{i}:=\sup_{\mathcal{D}\sim \mathcal{D}'} W_{i}(\mathcal{D},\mathcal{D}')  \qquad \textup{for $i=1,2$},
		\end{align}
		where the supremums are over all size-$n$ datasets $\mathcal{D},\mathcal{D}'$ that differ by at most one point. Then the following holds for Algorithm \ref{algorithm:main}:
		\begin{enumerate}[label=(\roman*)]
			\item For the Gamma noise, 
			$\varepsilon$-DP is guaranteed provided 
			\begin{align}\label{eq:beta_gamma_choice}
				\frac{1}{\beta} \ge  \frac{2\zeta  }{ \varepsilon } \overline{W}_{1}\quad \textup{and}\quad \gamma \ge \frac{2\lambda}{\varepsilon n} \overline{W}_{2},
			\end{align}
			
			\item For the Gaussian noise, $(\varepsilon,\delta)$-DP is guaranteed provided
			\begin{align}\label{eq:sigma_Gaussian_choice}
				\sigma \ge \frac{\zeta}{\varepsilon} \left( \tilde{L} + \sqrt{\tilde{L}^{2} + \frac{\varepsilon}{\overline{W}_{1}}} \right) \overline{W}_{1} \quad \textup{and} \quad \gamma \ge \frac{2\lambda}{\varepsilon n} \overline{W}_{2},
			\end{align}
			where $\tilde{L}:=\sqrt{ (\sqrt{p}+\sqrt{\log \delta^{-1}})^{2} + \log \delta^{-1}}$. 
		\end{enumerate}
	\end{theorem}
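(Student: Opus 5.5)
The plan is to derive Theorem~\ref{thm:DP_guarnatee} directly from Theorem~\ref{thm:sensitivity_objective_perturbation}. We bound the exponent appearing there uniformly over all neighboring pairs, using $W_i(\mathcal{D},\mathcal{D}')\le \overline{W}_i$, and then pick the noise scale and $\gamma$ so that each of the two exponent terms is at most $\varepsilon/2$. Since the hypotheses on $\lambda,\zeta$ hold for all feasible $\d$, they hold in particular for $\d\in\mathcal{D}\cup\mathcal{D}'$ for every neighboring pair, so Theorem~\ref{thm:sensitivity_objective_perturbation} applies to every pair. Because the exponents there are increasing in $W_1$ and $W_2$ (all coefficients are nonnegative), we may replace $W_i$ by $\overline{W}_i$ throughout.

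For part (i), the condition $\gamma\ge 2\lambda\overline{W}_2/(\varepsilon n)$ gives $\frac{\lambda}{\gamma n}W_2\le \frac{\lambda}{\gamma n}\overline{W}_2\le \varepsilon/2$. The condition $1/\beta\ge 2\zeta\overline{W}_1/\varepsilon$ gives $\beta\zeta W_1\le \beta\zeta\overline{W}_1\le\varepsilon/2$. Hence the exponent in \eqref{eq:thmA1_gamma} is at most $\varepsilon$ for every pair $\mathcal{D}\sim\mathcal{D}'$ and every Borel set $\Param_B$. This is exactly Definition~\ref{def:DP} with $\delta=0$. Symmetry in $\mathcal{D},\mathcal{D}'$ is automatic, since the statement holds for every ordered pair.

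For part (ii), the $\gamma$-term is handled identically, so it remains to check that $\bar\rho\,\overline{W}_1\le\varepsilon/2$ under the stated lower bound on $\sigma$. Write $\bar\rho=\frac{1}{2\sigma^2}(2\zeta\sigma\tilde L+\zeta^2)$, with $\tilde L$ as in the statement. The requirement $\bar\rho\overline{W}_1\le\varepsilon/2$ is equivalent to the quadratic inequality
\begin{align*}
\varepsilon\sigma^2-2\zeta\tilde L\overline{W}_1\,\sigma-\zeta^2\overline{W}_1\ge 0 .
\end{align*}
Its positive root is
\begin{align*}
\sigma_+=\frac{\zeta\overline{W}_1}{\varepsilon}\Big(\tilde L+\sqrt{\tilde L^2+\varepsilon/\overline{W}_1}\Big),
\end{align*}
which one verifies by expanding $\sqrt{4\zeta^2\tilde L^2\overline{W}_1^2+4\varepsilon\zeta^2\overline{W}_1}=2\zeta\overline{W}_1\sqrt{\tilde L^2+\varepsilon/\overline{W}_1}$. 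The quadratic is nonnegative for $\sigma\ge\sigma_+$, and this is precisely the hypothesis \eqref{eq:sigma_Gaussian_choice}. Then \eqref{eq:thmA1_gaussian} yields $\P_{\mathcal{D}}(\param^{\textup{priv}}\in\Param_B)\le e^{\varepsilon}\P_{\mathcal{D}'}(\param^{\textup{priv}}\in\Param_B)+\delta$, which is $(\varepsilon,\delta)$-DP.

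The only step needing real care is this algebra in part (ii): getting the root of the quadratic right and noting that $\bar\rho$ is decreasing in $\sigma$, so any larger $\sigma$ also works. A minor side issue is that the definition of $\overline{W}_i$ implicitly assumes $\overline{W}_1>0$ and finite. If $\overline{W}_1=0$, the $W_1$-term vanishes and any $\sigma$ works, so that case can be set aside.
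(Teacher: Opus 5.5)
Your proposal is correct and is essentially the paper's argument: apply Theorem~\ref{thm:sensitivity_objective_perturbation} to every neighboring pair, bound $W_i\le\overline{W}_i$, and allot $\varepsilon/2$ to each exponent term. In the Gaussian case the stated threshold on $\sigma$ is exactly the positive root of $\varepsilon\sigma^2-2\zeta\tilde{L}\overline{W}_1\sigma-\zeta^2\overline{W}_1=0$, as you computed (and your side case $\overline{W}_1=0$ cannot occur, since the sum-to-$n$ normalization forces $W_1>0$).
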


	A key feature in our privacy guarantee in Theorem  \ref{thm:DP_guarnatee} above is that the required noise level and the regularization for the objective perturbation depend explicitly on the data sensitivity of the weights through the quantities $W_{1}$ and $W_{2}$. Intuitively, these quantities are larger for more data-sensitive weighting schemes, which in turn requires a larger noise level and regularization to guarantee privacy of DP-2ERM under the same privacy budget. 
	
	To make the privacy guarantee in Theorem \ref{thm:DP_guarnatee} practical, we need to bound the worst-case weight sensitivity measures $\overline{W}_{1}$ and $\overline{W}_{2}$. These quantities depend on the specific choice of the first-stage weighting mechanism (see Section \ref{sec:stability_CBW}). We first consider the simplest case: a single-stage weighted ERM where the weights $\mathbf{w}$ are constant, data-independent (e.g., $\mathbf{w} = (1, \dots, 1)$). In this scenario, Theorem \ref{thm:DP_guarnatee} simplifies as follows.
	
	\begin{corollary}[Differential Privacy of Data-Independent Weighted ERM]\label{cor:DP_guarnatee_weighted_ERM}
		Suppose data-independent weight $\w=(w_{1},\dots,w_{n})$. Then Theorem \ref{thm:DP_guarnatee} holds with $\overline{W}_{1}=2\max w_{i}$ and $\overline{W}_{2}=\sqrt{2} \max w_{i} $. 
	\end{corollary}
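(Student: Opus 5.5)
Since the weights are data-independent, the weight vector is the same for every dataset: $\w(\mathcal{D})=\w(\mathcal{D}')=\w$ for all neighboring pairs $\mathcal{D}\sim\mathcal{D}'$. The plan is to evaluate $W_{1}(\mathcal{D},\mathcal{D}')$ and $W_{2}(\mathcal{D},\mathcal{D}')$ from \eqref{eq:def_W1_W2} directly, take the supremum over neighboring pairs, and substitute these values into Theorem~\ref{thm:DP_guarnatee}. That theorem needs no other change, because its hypotheses on $\zeta$ and $\lambda$ do not involve the weights. We may also regard the fixed $\w$ as the output of a trivial first stage, for instance with $\mathcal{E}$ constant in $\mathcal{D}$ and $F$ constant. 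Then Theorem~\ref{thm:sensitivity_objective_perturbation}, and hence Theorem~\ref{thm:DP_guarnatee}, applies verbatim.

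The computation goes as follows. With $\w'=\w$ we have $\|\w-\w'\|_{1}=0$, $\|\w-\w'\|_{2}^{2}=0$ and $\|\w-\w'\|_{0}=0$. Also $w_{i}\land w_{i}'=w_{i}$ and $w_{i}w_{i}'=w_{i}^{2}$. Hence
\begin{align*}
W_{1}=2\max_{i} w_{i},\qquad W_{2}=\sqrt{2\max_{i} w_{i}^{2}\cdot 1}=\sqrt{2}\,\max_{i} w_{i},
\end{align*}
where the last equality uses $w_{i}\ge 0$, so that $\max_i w_i^2=(\max_i w_i)^2$. These expressions do not depend on the pair $(\mathcal{D},\mathcal{D}')$, so the suprema give $\overline{W}_{1}=2\max_i w_i$ and $\overline{W}_{2}=\sqrt{2}\max_i w_i$. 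Plugging these into \eqref{eq:beta_gamma_choice} and \eqref{eq:sigma_Gaussian_choice} yields the claim.

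There is no real obstacle here. The only points needing care are two. First, nonnegativity of the weights is required to identify $\max_i w_i^{2}$ with $(\max_i w_i)^{2}$; this holds since $\w\in n\Delta^n$. Second, the convention $\|\mathbf{0}\|_0=0$ is what makes the factor $1+\|\w-\w'\|_0$ equal to $1$.
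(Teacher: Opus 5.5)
Your proposal is correct and is essentially the argument the corollary calls for; the paper omits its proof, but the statement is presented as a direct specialization of Theorem~\ref{thm:DP_guarnatee}. Because the weights do not depend on the data, $\w(\mathcal{D})=\w(\mathcal{D}')$ for every neighboring pair, so the difference terms vanish and $W_1,W_2$ reduce to the constants $2\max_i w_i$ and $\sqrt{2}\max_i w_i$, which you then substitute into the calibrations. Your side remarks are appropriate care the paper leaves implicit: nonnegativity of the weights, the convention $\|\mathbf{0}\|_0=0$, and realizing the fixed weights as the output of a trivial first stage so that Theorem~\ref{thm:sensitivity_objective_perturbation} formally applies.
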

	
	Note that when $\w=(1,\dots,1)$, Corollary \ref{cor:DP_guarnatee_weighted_ERM} essentially recovers the DP-ERM privacy guarantee of Chaudhari et al. in \cite{chaudhuri2011differentially}. 
	
	Next, we derive a corollary of Theorem \ref{thm:DP_guarnatee} that depends on the first-stage weighting mechanism only through a uniform bound on its maximum weight. Note that the sum-to-$n$ constraint implies the universal weight-stability bound $\|\w-\w'\|_{2}\le \| \w-\w' \|_{1} \le 2n$ and the trivial uniform bound $\max_{i} w_{i}\le n$; any additional structural bound $\max_{i} w_{i}(\mathcal{D})\le R$ may be imposed on top (we write the hypothesis as $R\wedge n$ so that $R=n$ is always admissible and recovers the unconditional case). This yields the following DP guarantee for a general non-private weighting mechanism.
	
	\begin{corollary}[Universal Differential Privacy of 2ERM via Objective Perturbation]\label{cor:DP_objective_perturbation}
		Suppose the first-stage weighting mechanism satisfies $\max_{1\le i\le n} w_{i}(\mathcal{D}) \le R\land n$ (the sum-to-$n$ constraint guarantees $R \le n$). Then Theorem~\ref{thm:DP_guarnatee} holds with
		\[
		\overline{W}_{1} \;\le\; 2n + 2(R\wedge n), \qquad
		\overline{W}_{2} \;\le\; \sqrt{2n(R\wedge n)(1+n)},
		\]
		irrespective of the particular first-stage mechanism. In particular, it always holds that $\overline{W}_{1}\le 4n$ and $\overline{W}_{2}\le \sqrt{2}\,n\sqrt{1+n}$. 
\end{corollary}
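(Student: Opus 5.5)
The plan is to bound $W_1(\mathcal D,\mathcal D')$ and $W_2(\mathcal D,\mathcal D')$ pointwise, for an arbitrary neighboring pair, using only the simplex constraint $\w,\w'\in n\Delta^n$ and the hypothesis $\max_i w_i,\max_i w_i'\le R\wedge n$. Then take suprema and invoke Theorem~\ref{thm:DP_guarnatee}. No property of the first-stage program \eqref{eq:find_optimal_weight} is used beyond feasibility, which is why the bound holds irrespective of the mechanism.

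\emph{Bounding $W_1$.} By nonnegativity and the sum-to-$n$ constraint, the triangle inequality gives
\[
\|\w-\w'\|_1\le \|\w\|_1+\|\w'\|_1=2n.
\]
For each $i$ we also have $w_i\wedge w_i'\le w_i\le R\wedge n$. Together these give $W_1\le 2n+2(R\wedge n)$.

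\emph{Bounding $W_2$.} Three estimates are needed.
\begin{itemize}
\item Since $\max_i w_i w_i'\le (R\wedge n)^2$, the second summand is at most $2(R\wedge n)^2$.
\item Since $\|\w-\w'\|_0\le n$, the second factor $1+\|\w-\w'\|_0$ is at most $1+n$.
\item For the squared $\ell_2$ distance, use the coordinatewise bound $|w_i-w_i'|\le \max(w_i,w_i')\le R\wedge n$ (valid because both entries are nonnegative). This gives
\[
\|\w-\w'\|_2^2\le \|\w-\w'\|_\infty\,\|\w-\w'\|_1\le 2n(R\wedge n).
\]
\end{itemize}

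The main obstacle is combining these so that the sum $\|\w-\w'\|_2^2+2\max_i w_iw_i'$ is at most $2n(R\wedge n)$, rather than $2n(R\wedge n)+2(R\wedge n)^2$. The crude bounds above give the latter. To get the former, I would use the sharper inequality
\[
\|\w-\w'\|_2^2+2w_jw_j'\le \sum_i (w_i^2+w_i'^2)
\]
for the maximizing index $j$. This holds because $\sum_i (w_i-w_i')^2=\sum_i (w_i^2+w_i'^2)-2\sum_i w_iw_i'$ and $\sum_i w_iw_i'\ge w_jw_j'$. Then
\[
\sum_i w_i^2\le (\max_i w_i)\sum_i w_i\le n(R\wedge n),
\]
and the same holds for $\w'$. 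So the sum is at most $2n(R\wedge n)$, and therefore $W_2\le\sqrt{2n(R\wedge n)(1+n)}$.

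\emph{Conclusion.} Taking suprema over $\mathcal D\sim\mathcal D'$ yields the stated bounds on $\overline W_1,\overline W_2$, and Theorem~\ref{thm:DP_guarnatee} applies with these values. Setting $R=n$, which is always admissible since $w_i\le\sum_k w_k=n$, gives $\overline W_1\le 4n$ and $\overline W_2\le \sqrt2\,n\sqrt{1+n}$.
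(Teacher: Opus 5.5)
Your proof is correct and follows the route the paper sketches before the corollary: pointwise bounds on $W_1,W_2$ from the sum-to-$n$ constraint and the max-weight hypothesis, followed by suprema over neighboring pairs. Your step $\|\w-\w'\|_2^2+2w_jw_j'\le\|\w\|_2^2+\|\w'\|_2^2\le 2n(R\wedge n)$ is exactly what delivers the stated constant, since the cruder $\|\w-\w'\|_2\le 2n$ quoted in the text would only give $\sqrt{6}\,n\sqrt{1+n}$. The one step you leave implicit is that substituting these upper bounds into Theorem~\ref{thm:DP_guarnatee} is legitimate because every calibration there is nondecreasing in $\overline{W}_i$, which for the Gaussian case follows by writing the threshold as $\frac{\zeta}{\varepsilon}\bigl(\tilde{L}\,\overline{W}_1+\sqrt{\tilde{L}^2\overline{W}_1^2+\varepsilon\,\overline{W}_1}\bigr)$.
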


While the algorithm setting in Corollary \ref{cor:DP_objective_perturbation} provides a universal differential privacy guarantee for DP-2ERM that is robust against the first-stage solution, the amount of noise and regularization provided there cannot guarantee strong utility. This is because these amounts were set high enough to prevent privacy leakage even in the most extreme weight shift scenario, such as achieving $\| \w-\w' \|_{1}=2n$ (e.g.,
$\w$ is supported on the first half of the coordinates and $\w'$ is supported on the second half of the coordinates). Even specializing Corollary~\ref{cor:DP_objective_perturbation} to a regime with a deterministic uniform bound $\max_{i} w_{i}(\mathcal{D})\le R\ll n$---which sharpens $\overline{W}_{2}$ from $O(n^{3/2})$ to $O(n\sqrt{R})$---still pays the full $\|\w-\w'\|_{1}\le 2n$ in the $\overline{W}_{1}$ term and therefore remains worst-case in the weight perturbation itself. Therefore, to apply Theorem \ref{thm:sensitivity_objective_perturbation} more efficiently, we need a tighter bound on the weight perturbation $\|\w-\w'\|$ itself, which must inevitably depend on the nature of the first-stage weighting mechanisms. We obtain more specific weight perturbation bounds for various standard weighting mechanisms in the causal inference literature, such as the IPW, MMD, and EBW. These results are stated in Section \ref{sec:stability_CBW} in detail.

We now turn our attention to the utility guarantees.
As long as $(\varepsilon, \delta)$-DP is guaranteed for the entire procedure, it would be beneficial to choose both the noise level ($1/\beta$ for the Gamma noise and $\sigma$ for the Gaussian noise) and the $L_{2}$-regularization parameter $\gamma$ in \eqref{eq:beta_gamma_choice} as small as possible to minimize the effect of unnecessary noise and estimation error due to severe $L_{2}$-regularization. This can be justified through utility analysis. Namely, compared to the smallest possible objective value of $\mathcal{L}(\cdot)$ at the optimal non-private parameter $\param^{\textup{opt}}$, how small is the objective value at the private estimate $\param^{\textup{priv}}$? In the following result, we show that the suboptimality of the private estimate $\param^{\textup{priv}}$ is bounded by a decreasing function of $\beta$ and $\gamma^{-1}$ and an increasing function of $\sigma$.

\begin{theorem}[Utility Guarantees for DP-2ERM]\label{thm:utility}
	Suppose $\mathcal{D}$ is a size-$n$ dataset and let $\w=(w_{1},\dots,w_{n})$ be the non-private optimal
	weights solving \eqref{eq:find_optimal_weight} for dataset $\mathcal{D}$. Assume that the objective $\mathcal{L}$ is $\rho$-strongly convex for some $\rho\ge 0$ (allowing $\rho=0$). Denote $a_{0}:=  \| \param^{\textup{opt}} \|_{2}$. Then for each
	$t \ge C$,
	\begin{align}\label{eq:utility_reduction}
		\P_{\b}\!\left(  \mathcal{L}(\param^{\textup{priv}};\mathcal{D})\ge
		\mathcal{L}(\param^{\textup{opt}};\mathcal{D}) + t  \right)
		\le
		\P_{\b}\!\left( \|\b \|_{2} \ge nD_t \right),
	\end{align}
	where $\b\in \R^{p}$ denotes the noise vector and
	$D_{t}:= \frac{2(t-C)}{B+\sqrt{B^2 + 4A(t-C)}}$, with constants defined as $A=\frac{2\rho+3\gamma}{2(\rho+\gamma)^2}$,  $B=\frac{ a_{0} \gamma (\rho+3\gamma)  }{(\rho+\gamma)^2 }$, and  $C= \frac{2a_{0}^2(\rho+2\gamma)}{(\rho+\gamma)^2} \gamma^{2}$.
	Moreover,
	\begin{enumerate}[label=(\roman*)]
		\item If the pdf of $\b$ is $f_{\b}(\x)\propto \exp(-\beta \| \x\|_{2})$, then 
		\begin{align}\label{eq:utility_gamma_poly}
			\P_{\b}\left(  \mathcal{L}(\param^{\textup{priv}};\mathcal{D})\ge
			\mathcal{L}(\param^{\textup{opt}};\mathcal{D}) + t  \right)
			\le C_p \exp\!\left(-\frac{\beta nD_t}{2}\right),
		\end{align}
		where $C_p := e\left(\frac{2(p-1)}{e}\right)^{p-1}$ depends only on $p$ (and $C_1:=1$). 
		
		\item If $\b\sim \mathcal{N}(\mathbf{0}, \sigma^{2}\I_p)$, then 
		\begin{align}\label{eq:utility_gaussian}
			\P_{\b}\left(  \mathcal{L}(\param^{\textup{priv}};\mathcal{D})\ge  \mathcal{L}(\param^{\textup{opt}};\mathcal{D}) + t   \right) 
			&\le 2\exp\left( -\frac{1}{2\sigma^{2}} (n D_{t}  - \sigma\sqrt{p})_{+}^{2}  \right).  
		\end{align}
	\end{enumerate}
\end{theorem}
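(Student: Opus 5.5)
The argument is deterministic in $\b$ first and probabilistic second: we bound the suboptimality gap $\mathcal{L}(\param^{\textup{priv}};\mathcal{D})-\mathcal{L}(\param^{\textup{opt}};\mathcal{D})$ by a quadratic function of $\|\b\|_2/n$ plus a constant, and then invert this relation.

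\textbf{Deterministic gap bound.} Write $\mathcal{L}^{\textup{priv}}(\param)=\mathcal{L}(\param)+\frac{\gamma}{2}\|\param\|^2+\langle\b,\param\rangle/n$, which is $(\rho+\gamma)$-strongly convex on $\Param$. Let $\tilde\param$ be the minimizer of the regularized but noiseless objective $\mathcal{L}+\frac{\gamma}{2}\|\cdot\|^2$ over $\Param$. We split
\begin{align*}
\mathcal{L}(\param^{\textup{priv}})-\mathcal{L}(\param^{\textup{opt}}) = \bigl[\mathcal{L}(\param^{\textup{priv}})-\mathcal{L}(\tilde\param)\bigr]+\bigl[\mathcal{L}(\tilde\param)-\mathcal{L}(\param^{\textup{opt}})\bigr].
\end{align*}

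\textbf{Regularization bias term.} Optimality of $\tilde\param$ gives $\mathcal{L}(\tilde\param)-\mathcal{L}(\param^{\textup{opt}})\le \frac{\gamma}{2}(a_0^2-\|\tilde\param\|^2)$. Strong convexity of the regularized objective together with the first-order condition at $\tilde\param$ yields $\|\tilde\param-\param^{\textup{opt}}\|\le \gamma a_0/(\rho+\gamma)$, and hence $\|\tilde\param\|\le a_0$ up to such corrections. Combined, these produce a term of order $a_0^2\gamma^2(\rho+2\gamma)/(\rho+\gamma)^2$, which should match $C$.

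\textbf{Noise term.} Comparing first-order optimality conditions of $\mathcal{L}^{\textup{priv}}$ at $\param^{\textup{priv}}$ and of the noiseless regularized problem at $\tilde\param$ via $(\rho+\gamma)$-strong monotonicity of the subdifferentials (valid with the constraint set $\Param$ through normal cones) gives
\begin{align*}
\|\param^{\textup{priv}}-\tilde\param\|\le \frac{\|\b\|}{n(\rho+\gamma)}.
\end{align*}
Next, since $\param^{\textup{priv}}$ minimizes $\mathcal{L}^{\textup{priv}}$,
\begin{align*}
\mathcal{L}(\param^{\textup{priv}})-\mathcal{L}(\tilde\param)\le \frac{\gamma}{2}\bigl(\|\tilde\param\|^2-\|\param^{\textup{priv}}\|^2\bigr)+\frac{\langle \b,\tilde\param-\param^{\textup{priv}}\rangle}{n} - \frac{\rho+\gamma}{2}\|\param^{\textup{priv}}-\tilde\param\|^2 .
\end{align*}
Expand $\|\tilde\param\|^2-\|\param^{\textup{priv}}\|^2\le \|\param^{\textup{priv}}-\tilde\param\|\,(2\|\tilde\param\|+\|\param^{\textup{priv}}-\tilde\param\|)$, then insert $\|\tilde\param\|\lesssim a_0$ and the displacement bound. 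Setting $x=\|\b\|/n$, this gives
\begin{align*}
\mathcal{L}(\param^{\textup{priv}})-\mathcal{L}(\param^{\textup{opt}})\le A x^2+Bx+C,
\end{align*}
with $A$ collecting the $x^2$ coefficients ($1/(\rho+\gamma)$ from the linear noise term, $\gamma/(2(\rho+\gamma)^2)$ from the quadratic one, after possibly dropping the negative strong-convexity term) and $B$ collecting the cross terms in $a_0\gamma$. I expect the main obstacle to be this bookkeeping: reproducing exactly the stated $A,B,C$ requires choosing carefully which inequalities to use at each step, in particular how to bound $\|\tilde\param\|$ and whether to keep the strong-convexity gain.

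\textbf{Inversion and tail bounds.} For $t\ge C$, the event that the gap is at least $t$ implies $Ax^2+Bx\ge t-C$, i.e. $x\ge$ the positive root, which is exactly $D_t=\frac{2(t-C)}{B+\sqrt{B^2+4A(t-C)}}$ after rationalizing. This gives \eqref{eq:utility_reduction}. For (i), $\|\b\|$ has the $\mathrm{Gamma}(p,\beta)$ density $\propto r^{p-1}e^{-\beta r}$. Bound $r^{p-1}e^{-\beta r/2}$ by its maximum $(2(p-1)/(e\beta))^{p-1}$ and integrate the remaining $e^{-\beta r/2}$ tail, obtaining the factor $e^{-\beta nD_t/2}$; the constant works out to $C_p$ after normalizing by $\Gamma(p)\beta^{-p}$ (a crude Stirling-type step, with the factor $e$ absorbing slack). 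For (ii), use Gaussian Lipschitz concentration of the norm: $\E\|\b\|\le\sigma\sqrt p$, and $\P(\|\b\|\ge \sigma\sqrt p+s)\le \exp(-s^2/(2\sigma^2))$. Take $s=(nD_t-\sigma\sqrt p)_+$; the factor $2$ covers the trivial case $s=0$.
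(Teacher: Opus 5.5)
Your proposal is correct and follows essentially the route behind the stated constants. $A$, $B$, $C$ are exactly what your decomposition through $\tilde\param$ gives with the cruder estimates $\|\tilde\param-\param^{\textup{opt}}\|\le 2\gamma a_0/(\rho+\gamma)$ and $\|\tilde\param\|\le a_0+\|\tilde\param-\param^{\textup{opt}}\|$. Your sharper bounds ($\|\tilde\param\|\le a_0$ in fact holds exactly, since $\mathcal{L}(\tilde\param)\ge\mathcal{L}(\param^{\textup{opt}})$) therefore give coefficients no larger than $A,B,C$. That is all you need: $\mathcal{L}(\param^{\textup{priv}})-\mathcal{L}(\param^{\textup{opt}})\ge t$ still forces $Ax^2+Bx\ge t-C$, so you do not have to reproduce the constants exactly.

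The only loose end is $p=1$. There your sup-then-integrate step yields the constant $2/(p-1)!=2$ rather than $C_1=1$. Use the exact tail $\P(\|\b\|_2\ge s)=e^{-\beta s}\le e^{-\beta s/2}$ in that case. For $p\ge 2$ no Stirling step is needed, since $2/(p-1)!\le 2<e$ already gives $C_p$.
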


We may further simplify the tail bounds in Theorem~\ref{thm:utility} by substituting the privacy calibrations from Theorem~\ref{thm:DP_guarnatee}. 

\begin{remark}[Privacy--Utility Trade-off]\label{rem:tradeoff}
	Theorem~\ref{thm:utility} bounds only the \emph{excess} objective value 
	$\mathcal{L}(\param^{\textup{priv}};\mathcal{D})-\mathcal{L}(\param^{\textup{opt}};\mathcal{D})$ 
	incurred by privatization; it does not address the absolute performance 
	$\mathcal{L}(\param^{\textup{opt}};\mathcal{D})$ of the underlying non-private estimator, 
	which, in the ITR setting, depends on the covariate balancing effectiveness of the chosen first-stage weighting method. This distinction reveals a fundamental trade-off. On one 
	extreme, data-independent weights (e.g., $w_i\equiv 1$) have zero data sensitivity, so the required noise depends only on the second-stage loss; however, without covariate balancing, the non-private solution $\param^{\textup{opt}}$ is itself biased, yielding a 
	low baseline value $\mathcal{L}(\param^{\textup{opt}};\mathcal{D})$. On the other extreme, 
	data-dependent balancing weights (e.g., IPW, EBW, MMD) effectively correct for confounding and
	improve $\mathcal{L}(\param^{\textup{opt}};\mathcal{D})$, but their nonzero data sensitivity 
	demands a larger noise injection, widening the gap 
	$\mathcal{L}(\param^{\textup{priv}};\mathcal{D})-\mathcal{L}(\param^{\textup{opt}};\mathcal{D})$. 
	The overall private utility is determined by the balance between these two competing effects.
\end{remark}

\begin{remark}[Privacy--Utility Scaling]\label{rem:scaling}
	Suppose $\overline{W}_1,\overline{W}_2=O(\sqrt{n})$ (as in Corollary~\ref{cor:DP_improved}) and calibrate $(\beta,\sigma,\gamma)$ according to Theorem~\ref{thm:DP_guarnatee}. Then the tail bounds in Theorem~\ref{thm:utility} scale differently for the Gamma and the Gaussian noise, with a faster decay for the Gaussian case but carrying the usual dimensional dependence through $p$. Details are given in Appendix. 
\end{remark}

Lastly in this section, we analyze how the ``excess-utility'' $\mathcal{L}(\param^{\textup{priv}}; \mathcal{D})-\mathcal{L}(\param^{\textup{opt}}; \mathcal{D})$ scales as the size $n$ of the dataset $\mathcal{D}$ tends to the infinity. We will assume all hyperparameters, such as $p,\lambda,\zeta$, are fixed while the size $n$ of the datasets grows to infinity. By doing so, we will obtain large-sample consistency of the DP-2ERM estimator under mild conditions. 

We first observe that the universal worst-case bounds in Corollary \ref{cor:DP_objective_perturbation} are too crude to yield vanishing excess-utility. 

\begin{remark}
	Suppose $\overline{W}_1\sim n$ and $\overline{W}_{2}\sim n^{3/2}$ as in the universal worst-case calibration in Corollary \ref{cor:DP_objective_perturbation} and calibrate $(\beta,\sigma,\gamma)$ according to Theorem~\ref{thm:DP_guarnatee}.  Then Theorem \ref{thm:utility} implies $\mathcal{L}(\param^{\textup{priv}}; \mathcal{D})-\mathcal{L}(\param^{\textup{opt}}; \mathcal{D})=O_{\P}(\sqrt{n})$. 
\end{remark}

Next, assuming that we have an improved bound on the quantities $\overline{W}_{1},\overline{W}_{2}$  on first-stage weight stability to $O(\sqrt{n})$,
we show that the excess-utility vanishes with high probability at rate at least $1/\sqrt{n}$. If the second-stage objective $\mathcal{L}$ is already $\rho$-strongly convex for some $\rho>0$, then this implies that our DP-2ERM estimator given by Theorem \ref{thm:DP_guarnatee}, while satisfying the required $\varepsilon$-DP and $(\varepsilon,\delta)$-DP depending on the noise type, is an asymptotically consistent estimator of the non-private optimal parameter $\param^{\textup{opt}}$ with estimation error at most order $1/\sqrt{n}$. 

\begin{theorem}[Consistency of the DP-2ERM  estimator]\label{thm:consistency}
	Keep the same settings in Theorems \ref{thm:DP_guarnatee} and \ref{thm:utility}. 
	Suppose $\overline{W}_1,\overline{W}_2=O(\sqrt{n})$ 
	and calibrate $(\beta,\sigma,\gamma)$ according to Theorem~\ref{thm:DP_guarnatee} so that $\param^{\textup{priv}}$ is a $\varepsilon$-DP (resp., $(\varepsilon,\delta)$-DP) estimator of $\param^{\textup{opt}}$ with the Gamma (resp., Gaussian) noise. Then the following holds for both Gamma and Gaussian noise distributions.
	
	\begin{enumerate}[label=(\roman*), leftmargin=*]
		\item \textit{Intrinsic strong convexity ($\rho>0$):}
		if the second-stage objective $\mathcal{L}$ is already $\rho$-strongly convex, then
		\begin{align}\label{eq:consistency_strong_convex}
			\mathcal{L}(\param^{\textup{priv}}; \mathcal{D})-\mathcal{L}(\param^{\textup{opt}}; \mathcal{D}) = O_{\P}(n^{-1})  \quad \textup{and}\quad  \|  \param^{\textup{priv}} - \param^{\textup{opt}} \|_{2} = O_{\P}(n^{-1/2}). 
		\end{align}
		
		\item  \textit{No intrinsic strong convexity ($\rho=0$):}
		if curvature is supplied only through the added regularization $\gamma$, then 
		\begin{align}\label{eq:consistency_convex}
			\mathcal{L}(\param^{\textup{priv}}; \mathcal{D})-\mathcal{L}(\param^{\textup{opt}}; \mathcal{D}) = O_{\P}(n^{-1/2}).  
		\end{align}
	\end{enumerate}
\end{theorem}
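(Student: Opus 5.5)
The plan is to substitute the calibrations of Theorem~\ref{thm:DP_guarnatee} into the tail bounds of Theorem~\ref{thm:utility} and track the orders in $n$ of the constants $A,B,C$ and of the threshold $D_t$. With $\overline{W}_1,\overline{W}_2=O(\sqrt n)$, I take the smallest admissible calibrations. In the Gamma case this gives $1/\beta = 2\zeta\overline{W}_1/\varepsilon = O(\sqrt n)$, so $\beta n \asymp \sqrt n$. In the Gaussian case $\sigma = O(\sqrt n)$, because $\tilde L$ is fixed and $\sqrt{\tilde L^2+\varepsilon/\overline{W}_1}\le \tilde L+O(1)$. In both cases $\gamma = 2\lambda\overline{W}_2/(\varepsilon n) = O(n^{-1/2})$. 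It is convenient to use the quantity $\|\b\|_2/n$, which is $O_\P(n^{-1/2})$ in both cases: for Gamma noise, $\|\b\|_2\sim\mathrm{Gamma}(p,\beta)$ has mean $p/\beta$; for Gaussian noise, $\|\b\|_2\le \sigma(\sqrt p + O_\P(1))$. Here $p$ and $a_0$ are treated as fixed, and $a_0=\|\param^{\textup{opt}}\|_2$ is assumed to be $O(1)$, or at least $O_\P(1)$. I will state this implicitly needed assumption explicitly.

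Rather than inverting $D_t$ directly, I will use the equivalent form of \eqref{eq:utility_reduction}. The event $\{\|\b\|_2< nD_t\}$ implies excess $<t$, and $D_t$ is the positive root of $AD^2+BD=t-C$. It follows that on the event $\|\b\|_2/n = s$, the excess is at most $C + Bs + As^2$. So I need the orders of $A$, $B$, $C$, and I would state this deterministic implication (excess $\le C+Bs+As^2$ with $s=\|\b\|_2/n$) as the first step.

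\emph{Case (i), $\rho>0$.} Here $A\asymp (2\rho)/(2\rho^2)=1/\rho=O(1)$, $B\asymp a_0\gamma\rho/\rho^2=O(\gamma)=O(n^{-1/2})$, and $C\asymp a_0^2\gamma^2/\rho=O(n^{-1})$. Then the excess is at most
\[
O(n^{-1})+O(n^{-1/2})O_\P(n^{-1/2})+O(1)\,O_\P(n^{-1})=O_\P(n^{-1}).
\]
The parameter bound follows from strong convexity and first-order optimality of $\param^{\textup{opt}}$ over the convex set $\Param$: $\frac{\rho}{2}\|\param^{\textup{priv}}-\param^{\textup{opt}}\|_2^2\le \mathcal{L}(\param^{\textup{priv}})-\mathcal{L}(\param^{\textup{opt}})$. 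This gives $O_\P(n^{-1/2})$.

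\emph{Case (ii), $\rho=0$.} Now $A=3/(2\gamma)\asymp \sqrt n$, $B=3a_0$ is $O(1)$, and $C=4a_0^2\gamma = O(n^{-1/2})$. The excess is then at most
\[
O(n^{-1/2})+O(1)\,O_\P(n^{-1/2})+\sqrt n\,O_\P(n^{-1})=O_\P(n^{-1/2}).
\]
This is the main point to check: the $A s^2$ term balances exactly because $\gamma\asymp n^{-1/2}$ while $s^2\asymp n^{-1}$. If the minimal calibration were not used, that is, if $\gamma$ were larger, the $C$ term would dominate. So I will fix $\gamma$ and the noise scale at constant multiples of their lower bounds from Theorem~\ref{thm:DP_guarnatee}.

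The main obstacle is making the $O_\P$ statements rigorous rather than only heuristic. To get uniform tail control, I will use the explicit bounds \eqref{eq:utility_gamma_poly} and \eqref{eq:utility_gaussian} with $t=C+BM n^{-1/2}+AM^2n^{-1}$, which corresponds to $nD_t=M\sqrt n$. In the Gamma case the probability is at most $C_p e^{-\beta M\sqrt n/2}=C_pe^{-cM}$. In the Gaussian case it is at most $2e^{-(M-c'\sqrt p)_+^2/(2c'^2)}$. Both tend to $0$ as $M\to\infty$ uniformly in $n$, which is exactly the definition of $O_\P$ at the stated rates.
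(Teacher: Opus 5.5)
Your proposal is correct and follows essentially the same route the paper's setup points to: substitute the calibrations of Theorem~\ref{thm:DP_guarnatee} into the tail bounds of Theorem~\ref{thm:utility}, track the orders of $A$, $B$, $C$ in the two curvature regimes, choose $t$ so that $nD_t$ is a large multiple of the noise scale, and in case (i) get the parameter rate from the quadratic growth $\frac{\rho}{2}\|\param^{\textup{priv}}-\param^{\textup{opt}}\|_2^2\le \mathcal{L}(\param^{\textup{priv}};\mathcal{D})-\mathcal{L}(\param^{\textup{opt}};\mathcal{D})$. The one step to tighten is $A=3/(2\gamma)\asymp\sqrt n$ in case (ii): this needs $\gamma\gtrsim n^{-1/2}$, not merely $\gamma=O(n^{-1/2})$. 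The step fails, for instance, for bounded data-independent weights, where $\overline{W}_2=O(1)$ and the minimal $\gamma$ is $O(1/n)$, so your threshold $nD_t=M\sqrt n$ would only give an $O(1)$ excess. To repair it, either calibrate $\gamma$ and the noise at the $\sqrt n$ scale itself (the natural reading of the hypothesis), or set $nD_t$ to $M$ times the actual noise scale and invoke $W_1\le(1+\sqrt2)\,W_2$. That inequality follows from $\|\w-\w'\|_1\le\sqrt{\|\w-\w'\|_0}\,\|\w-\w'\|_2$ and $w_i\wedge w_i'\le\sqrt{w_iw_i'}$.
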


Thus, the practical message of Theorem \ref{thm:utility} is that the utility of DP-2ERM depends not only on the privacy
mechanism but also on the stability of the first-stage weighting method. When the chosen first-stage weighting method yields $\overline{W}_1,\overline{W}_2=O(\sqrt{n})$, the excess-utility vanishes with high probability, while it could potentially blow up at rate $O(\sqrt{n})$ under universal worst-case calibration. In Theorem \ref{thm:stability_weights_gen} and Corollary \ref{cor:DP_improved} stated in the following section, we will justify the assumption $\overline{W}_{1},\overline{W}_{2}=O(\sqrt{n})$ for a wide range of covariate balancing weights. 

Before developing weight stability bounds, we briefly show how learning ITRs from observational data naturally induces a two-stage weighted ERM structure, making $\|\w(\mathcal{D})-\w(\mathcal{D}')\|$ the central quantity governing the privacy--utility trade-off.

\subsection{Running Example: ITR Learning as a 2ERM Problem}\label{sec:running_example_ITR}

We sketch how ITR estimation fits into the 2ERM framework and why covariate balancing weights are needed (see Section \ref{sec:ITR} for more details). Let $A\in\{0,1\}$ denote treatment, $Y$ the outcome, and $X$ baseline covariates. Let $Y(a)$ denote the potential outcome under treatment $a\in\mathcal{A}$. We assume the standard identification conditions in the potential outcome framework \cite{rubin1980randomization}: (i) the \textit{stable unit treatment value assumption} (SUTVA, $Y=Y(A)$), (ii) positivity ($0<\pi(a,\x):=\P(A=a\mid \X=\x)<1$ for all $a\in\mathcal{A}$ and $\x\in\mathcal{X}$), and (iii) no unmeasured confounding ($Y(a)\perp A\mid \X$ for all $a\in\mathcal{A}$).

Under the identification conditions, for a decision rule $d:\mathcal{X}\to\{0,1\}$, its value can be expressed as
\[
V(d)=\E\!\left[\frac{Y\,\mathbf{1}\{A=d(\X)\}}{\pi(A,\X)}\right]
\]
where $\pi(a,X)=\P(A=a\mid X)$ is the propensity score. In observational studies, the treated and control groups typically differ in their covariate distributions, and the inverse-propensity reweighting in $V(d)$ highlights that na\"ive (unweighted) learning can target the wrong covariate distribution.

In practice, $\pi$ is unknown and directly modeling it can be unstable. A modern alternative is to compute covariate balancing weights that directly rebalance treated and control covariate distributions. These weights are obtained in a first stage by solving a convex problem of the form \eqref{eq:find_optimal_weight}, yielding a data-dependent weight vector $\w(\mathcal{D})$. The ITR is then estimated in a second stage by solving a weighted ERM problem of the form \eqref{eq:weighted_convex_opt}, treating $\w(\mathcal{D})$ as a nuisance input. This is precisely the two-stage dependence captured by DP-2ERM: a single-record perturbation affects $\param^{\textup{priv}}$ both directly through the second-stage empirical risk and indirectly through the first-stage weights. Consequently, bounding $\|\w(\mathcal{D})-\w(\mathcal{D}')\|$ is the key step toward calibrating privacy noise with minimal loss of utility.

\section{Stability of Covariate Balancing Weights}
\label{sec:stability_CBW}

This section presents the paper's second main result: deterministic non-asymptotic stability bounds for the first-stage weight maps of three standard covariate-balancing schemes --- IPW, MMD, and EBW. Each bound quantifies how much $\w(\mathcal{D})$ can change when one record in $\mathcal{D}$ is modified, and is derived directly from the convex structure of the first-stage program without reference to privacy. Plugged into the sensitivity bound of Theorem~\ref{thm:sensitivity_objective_perturbation}, each yields a method-specific DP calibration (Corollary~\ref{cor:DP_improved}). We first state a general stability principle for weights defined as solutions to strongly convex optimization problems, then instantiate it for each method.

Our key tool for this weight stability analysis is the following result, which provides a general stability bound on how much the optimal weights $\w(\mathcal{D})$ are perturbed when a single point in the dataset $\mathcal{D}$ is altered. Specifically, it states that the perturbation is small if the change in the gradient of the objective $\mathcal{E}(\cdot;\mathcal{D})$ is small relative to its strong convexity parameter.

\begin{theorem}[General Stability of Parameterized Weights]\label{thm:stability_weights_gen1}
	Let  $\mathcal{D}$ and $\mathcal{D}'$ be two $n$-sample datasets. Let $\w(\mathcal{D})$ and $\w(\mathcal{D}')$ denote the corresponding optimal weights as in \eqref{eq:find_optimal_weight}. 
	Suppose $F$ is $L_{F}$-Lipschitz continuous and  $\mathcal{E}(\cdot;\mathcal{D})$ is $\rho$-strongly convex for some $\rho>0$. Then 
	\begin{align}\label{eq:weight_stability_bd_gen}
		\| \w(\mathcal{D}) - \w(\mathcal{D}')  \|_{2} \le \frac{2L_{F}}{\rho} \sup_{\blambda\in \Xi} \| \nabla \mathcal{E}(\blambda;\mathcal{D}) - \nabla \mathcal{E}(\blambda;\mathcal{D}') \|_{2}.  
	\end{align}
\end{theorem}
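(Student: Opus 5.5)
The plan is to reduce the weight perturbation to a perturbation of the first-stage minimizer and then push it through $F$. Write $\blambda^{*}=\blambda^{*}(\mathcal{D})$ and $\blambda'^{*}=\blambda^{*}(\mathcal{D}')$. Since $F$ is $L_{F}$-Lipschitz, $\|\w(\mathcal{D})-\w(\mathcal{D}')\|_{2}=\|F(\blambda^{*})-F(\blambda'^{*})\|_{2}\le L_{F}\|\blambda^{*}-\blambda'^{*}\|_{2}$. It therefore suffices to show
\[
\|\blambda^{*}-\blambda'^{*}\|_{2}\le \frac{2}{\rho}\sup_{\blambda\in\Xi}\|\nabla\mathcal{E}(\blambda;\mathcal{D})-\nabla\mathcal{E}(\blambda;\mathcal{D}')\|_{2}.
\]
This is a standard perturbation bound for minimizers of strongly convex functions over a closed convex set. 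I would read the strong convexity hypothesis as holding for both $\mathcal{E}(\cdot;\mathcal{D})$ and $\mathcal{E}(\cdot;\mathcal{D}')$, since $\mathcal{D}$ is arbitrary.

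For the minimizer bound, I would use strong convexity together with the variational first-order optimality conditions, which handle the constraint set $\Xi$. Optimality gives $\langle\nabla\mathcal{E}(\blambda^{*};\mathcal{D}),\blambda-\blambda^{*}\rangle\ge0$ for all $\blambda\in\Xi$, and similarly for $\blambda'^{*}$ with $\mathcal{D}'$. Strong convexity of $\mathcal{E}(\cdot;\mathcal{D})$ gives
\[
\mathcal{E}(\blambda'^{*};\mathcal{D})\ge\mathcal{E}(\blambda^{*};\mathcal{D})+\tfrac{\rho}{2}\|\blambda'^{*}-\blambda^{*}\|^{2}.
\]
The analogous inequality holds with the roles of $\mathcal{D}$ and $\mathcal{D}'$ swapped. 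Adding the two and setting $g:=\mathcal{E}(\cdot;\mathcal{D})-\mathcal{E}(\cdot;\mathcal{D}')$ yields
\[
\rho\|\blambda^{*}-\blambda'^{*}\|^{2}\le g(\blambda'^{*})-g(\blambda^{*}).
\]
By the mean value theorem along the segment joining the two minimizers, which stays in $\Xi$ by convexity, the right side is at most $\sup_{\Xi}\|\nabla g\|\,\|\blambda^{*}-\blambda'^{*}\|$. Dividing through gives the bound with constant $1/\rho$, which is even better than the claimed $2/\rho$.

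An alternative route, which produces the factor $2$ more naturally, would combine the strong monotonicity inequality $\langle\nabla\mathcal{E}(\blambda^{*};\mathcal{D})-\nabla\mathcal{E}(\blambda'^{*};\mathcal{D}),\blambda^{*}-\blambda'^{*}\rangle\ge\rho\|\blambda^{*}-\blambda'^{*}\|^{2}$ with the two variational inequalities, then apply Cauchy--Schwarz. Either route suffices, since $1/\rho\le2/\rho$.

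The main obstacle is regularity. I am assuming $\mathcal{E}(\cdot;\mathcal{D})$ is differentiable on $\Xi$ (the gradient already appears in the statement) and that minimizers exist and are unique, which follows from strong convexity and closedness of $\Xi$. If $\Xi$ has empty interior, or gradients are only one-sided at the boundary, I would apply the argument along the segment to the directional derivative of $g$, which is bounded by $\sup\|\nabla g\|$. Combining with the Lipschitz step through $F$ then finishes \eqref{eq:weight_stability_bd_gen}.
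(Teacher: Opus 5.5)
Your overall plan is sound: reduce to the minimizers via the Lipschitz map $F$, then combine first-order optimality with a mean-value bound along the segment in $\Xi$. The main route, however, uses a hypothesis the statement does not grant. The theorem assumes only that $\mathcal{E}(\cdot;\mathcal{D})$ is $\rho$-strongly convex, and this one-sidedness matters. In Proposition~\ref{prop:IPW_estimated_stability} and Theorem~\ref{thm:entropy_weight_stability}, $\rho$ is built from $\lambda_{\min}(\hat{\Sigma})$ of $\mathcal{D}$ alone. Changing one record can lower that eigenvalue, so $\mathcal{E}(\cdot;\mathcal{D}')$ need not be $\rho$-strongly convex. Your step ``the analogous inequality holds with the roles of $\mathcal{D}$ and $\mathcal{D}'$ swapped'' is therefore not available, and ``since $\mathcal{D}$ is arbitrary'' does not justify it.

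The repair is immediate and also explains the constant. Keep $\mathcal{E}(\blambda'^{*};\mathcal{D})\ge\mathcal{E}(\blambda^{*};\mathcal{D})+\frac{\rho}{2}\|\blambda^{*}-\blambda'^{*}\|^{2}$, but replace the swapped inequality by plain optimality, $\mathcal{E}(\blambda^{*};\mathcal{D}')\ge\mathcal{E}(\blambda'^{*};\mathcal{D}')$. This holds for any minimizer $\blambda'^{*}$, whose existence the statement presupposes. Summing the two and applying your mean-value step gives $\frac{\rho}{2}\|\blambda^{*}-\blambda'^{*}\|^{2}\le g(\blambda'^{*})-g(\blambda^{*})\le\sup_{\Xi}\|\nabla g\|\,\|\blambda^{*}-\blambda'^{*}\|$. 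That is exactly the factor $2/\rho$ and the supremum over $\Xi$ in \eqref{eq:weight_stability_bd_gen}. Your ``better'' constant $1/\rho$ came only from the extra assumption.

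Your description of the alternative route is also off. It does not ``produce the factor 2''; it is the sharper argument, and it already works under the literal hypothesis. The strong monotonicity inequality you wrote involves only $\nabla\mathcal{E}(\cdot;\mathcal{D})$, and the variational inequality at $\blambda'^{*}$ needs only that $\blambda'^{*}$ minimizes $\mathcal{E}(\cdot;\mathcal{D}')$ over $\Xi$. Chaining them gives $\rho\|\blambda^{*}-\blambda'^{*}\|^{2}\le\langle\nabla g(\blambda'^{*}),\,\blambda'^{*}-\blambda^{*}\rangle$. Hence $\|\blambda^{*}-\blambda'^{*}\|\le\rho^{-1}\|\nabla g(\blambda'^{*})\|$, which has constant $1/\rho$ and needs the gradient gap only at the single point $\blambda'^{*}$. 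Either corrected argument, followed by your Lipschitz step through $F$, proves the statement.
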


\noindent\textit{Proof of Theorem~\ref{thm:stability_weights_gen1}.}
See Appendix.

Based on the general principle in Theorem~\ref{thm:stability_weights_gen1}, we obtain stability bounds for various covariate balancing weights; IPW with randomized trials (Proposition~\ref{prop:stability_IPW_randomized_trial}), IPW with logistic regression (Propositions~\ref{prop:IPW_known_beta} and \ref{prop:IPW_estimated_stability}), MMD (Proposition~\ref{prop:MMD_stability}), and EBW (Theorem~\ref{thm:entropy_weight_stability}). Proofs are deferred to Appendix.
Below in Theorem~\ref{thm:stability_weights_gen}, we summarize these results with many constants suppressed in the $O(\cdot)$ notation. 

\begin{theorem}[Stability of Covariate Balancing Weights]\label{thm:stability_weights_gen}
	Let  $\mathcal{D}$ and $\mathcal{D}'$ be two $n$-sample datasets that differ at one point. Then we have 
	\begin{align}\label{eq:weight_stability_bd_improved}
		&\vspace{-1.5cm}\| \w(\mathcal{D})-\w(\mathcal{D}') \|_{2} \\
		&\hspace{-0.3cm}\le \begin{cases} 
			O(1)  & \textup{IPW for randomized trials; Proposition ~\ref{prop:stability_IPW_randomized_trial}}, \\
			O(1)  & \textup{IPW with known regression parameter; Prop. ~\ref{prop:IPW_known_beta}}, \\
			O\left(1\land \frac{1}{\lambda_{\min}(\hat{\Sigma})+\lambda_\textup{IPW}}\right)  & \textup{IPW with logistic regression; Proposition ~\ref{prop:IPW_estimated_stability}}, \\
			O(n/\lambda_{\textup{MMD}}) & \textup{MMD; Proposition ~\ref{prop:MMD_stability}}, \\
			O\left(1\land \frac{1}{(\lambda_{\min}(\hat{\Sigma})+\lambda_\textup{EBW})}\right) & \textup{EBW; Theorem ~\ref{thm:entropy_weight_stability}},
		\end{cases}
		\nonumber
	\end{align}
	where $\hat{\Sigma}\in \R^{p\times p}$ denotes the empirical covariance matrix of the covariates in $\mathcal{D}$. 
\end{theorem}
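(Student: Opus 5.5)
The plan is to treat Theorem~\ref{thm:stability_weights_gen} as a summary theorem. It follows case by case, mostly by applying the general principle of Theorem~\ref{thm:stability_weights_gen1} to each weighting scheme; the one exception is the randomized-trial case, which is a direct computation. Throughout, I assume bounded covariates $\|\x_i\|\le B_X$ and that positivity is enforced by clipping, so propensities stay in $[c,1-c]$. Each case then reduces to three checks: identify $\Xi$, $F$ and $\mathcal{E}$; bound the Lipschitz constant $L_F$ and the strong convexity modulus $\rho$; and bound $\sup_{\blambda}\|\nabla\mathcal{E}(\blambda;\mathcal{D})-\nabla\mathcal{E}(\blambda;\mathcal{D}')\|_2$. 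Because $\mathcal{D}$ and $\mathcal{D}'$ differ in a single record $j$, and $\mathcal{E}$ is an empirical average, the gradient difference is the difference of two per-sample gradients. It is therefore $O(1/n)$ times a bound on a single per-sample gradient. The normalization $\w\in n\Delta^n$ has to be tracked carefully, since it can contribute a factor $n$ through $L_F$.

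\textbf{Randomized trials and known $\beta$.} With a known propensity $\pi(a,\x)$, the weights take the form $w_i=n\,g(\d_i)/\sum_k g(\d_k)$ with $g=1/\pi\in[1,1/c]$. Changing record $j$ changes $w_j$ by $O(1)$. It changes every other $w_i$ through the normalizing constant by $O(1/n)$ each, which gives $\ell_2$ contribution $O(\sqrt n\cdot 1/n)=O(1/\sqrt n)$. The total is therefore $O(1)$, and no optimization is needed. The randomized trial is the special case with constant $\pi$.

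\textbf{Estimated logistic IPW and EBW.} For estimated logistic IPW, $\mathcal{E}$ is the ridge-penalized logistic negative log-likelihood, so $\rho\ge c'\lambda_{\min}(\hat\Sigma)+\lambda_{\textup{IPW}}$. Here I would need the local curvature $\pi(1-\pi)$ bounded below on the bounded parameter set $\Xi$. The per-sample gradient difference is $O(B_X/n)$. The map $F(\blambda)=n\,(1/\pi_{\blambda}(\x_i))_i/\sum_k(\cdot)$ has Jacobian entries of order $B_X$ per coordinate, so $L_F=O(\sqrt n)$. Theorem~\ref{thm:stability_weights_gen1} then gives a change in $\blambda^*$ of order $O(1/(n\rho))$ and a weight change of $O(\sqrt n/(n\rho))\le O(1/\rho)$. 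The constant here needs care to get exactly $O(1/\rho)$ rather than $O(n^{-1/2}/\rho)$, so I would keep whichever bound the constants support.

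Adding the change in the $j$-th raw term, and combining with the trivial bound from the known-$\beta$ case (clipping makes each weight $O(1)$), yields $O(1\wedge 1/(\lambda_{\min}+\lambda))$. EBW is handled through its dual: $\blambda^*$ minimizes $\log\sum_i\exp(\blambda^\top\x_i)-\blambda^\top\bar{\x}_{\text{target}}+\frac{\lambda_{\textup{EBW}}}{2}\|\blambda\|^2$. The dual Hessian is the weighted covariance of the covariates, and $F$ is the softmax scaled by $n$. The same three estimates then apply.

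\textbf{MMD.} Here $F$ is the identity, $L_F=1$, and $\mathcal{E}(\w)=\frac{1}{n^2}\w^\top K\w-\frac{2}{n^2}\w^\top\mathbf{k}+\lambda_{\textup{MMD}}\|\w\|^2/n^2$ or a comparable form, so $\rho$ is of order $\lambda_{\textup{MMD}}/n^2$. Changing one record alters one row and one column of $K$; on $n\Delta^n$ the gradient difference has norm $O(1/n)$. The ratio of these two quantities is $O(n/\lambda_{\textup{MMD}})$.

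\textbf{Main obstacle.} I expect the hardest step to be the EBW and logistic cases. Strong convexity of the dual involves the \emph{weighted} covariance, and a lower bound for it in terms of $\lambda_{\min}(\hat\Sigma)$ requires a lower bound on the weights, i.e.\ a bound on $\|\blambda^*\|$. I would first obtain an a priori bound on $\|\blambda^*\|$ from the regularizer by comparing the objective with its value at $\blambda=\0$. That bound then restricts $\Xi$ to a ball on which the weights are bounded above and below by constants, which gives the curvature and the Lipschitz bound on $F$ simultaneously.
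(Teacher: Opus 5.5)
\paragraph{Gap: the data-shift term in the estimated-IPW and EBW cases.} Your case split matches how the summary is assembled from Propositions~\ref{prop:stability_IPW_randomized_trial}--\ref{prop:MMD_stability} and Theorem~\ref{thm:entropy_weight_stability}: direct computation for the fixed-propensity cases, and Theorem~\ref{thm:stability_weights_gen1} plus a data-shift correction for the parameterized schemes. The randomized-trial, known-parameter and MMD arguments go through essentially as you describe. The problem is the step ``adding the change in the $j$-th raw term, and combining with the trivial bound \dots\ yields $O(1\wedge 1/(\lambda_{\min}+\lambda))$.''

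Because $F$ depends on the data, you have $\|\w-\w'\|_2\le \|F_{\mathcal{D}}(\hat\blambda)-F_{\mathcal{D}}(\hat\blambda')\|_2+\|F_{\mathcal{D}}(\hat\blambda')-F_{\mathcal{D}'}(\hat\blambda')\|_2$. The first (parameter-shift) term is $O(1/\rho)$, and your sharper $O(1/(\sqrt n\rho))$ is also right. The second (data-shift) term, however, you bound only by the known-parameter $O(1)$ estimate. Then the total is $O(1/\rho)+O(1)=O(1)$, not $O(1\wedge 1/\rho)$. Taking a minimum with a trivial bound cannot create decay in $\rho$. In fact there is no trivial $O(1)$ bound on the whole difference, since with an estimated parameter all $n$ weights move. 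The decay in $\lambda_{\textup{IPW}}$ and $\lambda_{\textup{EBW}}$ is the actual content of these two lines. Corollary~\ref{cor:DP_improved}(ii),(iv) and the claim that sensitivity shrinks linearly in the first-stage regularizer both rest on it.

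\paragraph{Missing idea: localize $\hat\blambda$ near $\0$.} At $\0$ the weight map is data-independent: $f_i\equiv 2$ for IPW and $w_i=nq_i$ for EBW. Since $\0\in\Xi$, the variational inequality at $\hat\blambda$ together with $\rho$-strong convexity gives $\rho\|\hat\blambda\|_2^2\le \langle \nabla\mathcal{E}(\hat\blambda)-\nabla\mathcal{E}(\0),\hat\blambda\rangle\le -\langle\nabla\mathcal{E}(\0),\hat\blambda\rangle$. Hence $\|\hat\blambda\|_2\le \|\nabla\mathcal{E}(\0;\mathcal{D})\|_2/\rho$. For the logistic loss $\|\nabla\mathcal{E}(\0)\|_2\le M/2$, so $\hat\blambda$ and $\hat\blambda'$ both lie in the ball of radius $\delta_I=R_{\textup{IPW}}\wedge (M/2)/\rho$.

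The changed record's raw factor then moves by at most $e^{M\delta_I}-e^{-M\delta_I}\le 2M\delta_I e^{M\delta_I}$. The ripple through the normalization adds only a lower-order amount. So the data-shift term is $O(\delta_I e^{M\delta_I})=O(1\wedge 1/\rho)$. These are exactly the $\delta_I$ and $\delta_E$ terms in Proposition~\ref{prop:IPW_estimated_stability} and Theorem~\ref{thm:entropy_weight_stability}.

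The a priori bound in your last paragraph (comparing objective values with $\blambda=\0$) is not enough. It only gives $\|\hat\blambda\|_2^2\le 2\log 2/\lambda_{\textup{IPW}}$, i.e.\ $\lambda^{-1/2}$ decay, so you need the gradient form above.

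\paragraph{A further point for EBW.} The dual objective is not an empirical average. Also, if the modified record switches arms, $n_0$ and $n_1$ change, which rescales every $\b_i$ and $\overline{\mathbf{g}}$. The gradient difference is still $O(1/n)$, but you must prove this directly, using that the softmax probabilities are $O(1/n)$ on the ball. Your ``difference of two per-sample gradients'' argument does not apply here.
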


To avoid confusion with the Hessian trace bound $\lambda$ in Theorem \ref{thm:sensitivity_objective_perturbation}, we note that these $\lambda$ terms represent the $L_{2}$-regularization coefficients in the weight optimization problems defined in Equations \eqref{eq:IPW_beta_logistic_loss}, \eqref{eq:MMD_weight_def}, and \eqref{eq:entropy_dual}, respectively. For IPW with estimated logistic parameters and EBW, the bounds
also depend on the smallest eigenvalue of the empirical covariance matrix $\hat{\Sigma}$; when $\lambda_{\min}(\hat{\Sigma})$ is bounded away from zero (e.g., under i.i.d.\ sampling from a distribution with positive definite covariance), the rates simplify to 
$O(1\land (1+\lambda_{\textup{IPW}})^{-1})$ and $O(1\land (1+\lambda_{\textup{EBW}})^{-1})$, respectively. 
With positive regularization, these bounds are not directly comparable since the regularized 
parameters differ across methods.


By combining Theorems \ref{thm:DP_guarnatee} and \ref{thm:stability_weights_gen}, we can obtain significantly improved privacy guarantee of DP-2ERM with covariate balancing weights in the sense that, for the same privacy budget, the required noise level (proportional to $\overline{W}_{1}$) and the $L_{2}$ regularization (proportional to $\overline{W}_{2}$) in the objective perturbation for the second stage are much smaller. The improvement is at least a factor of $1/\sqrt{n}$ for the noise level and a factor of $1/n$ for the regularization. These results are stated in the corollary below, where the constants in $O(\cdot)$ terms can be traced from the corresponding results stated in \eqref{eq:weight_stability_bd_improved}. 

\begin{corollary}[Improved Privacy Guarantees of DP-2ERM with Covariate Balancing Weights]\label{cor:DP_improved}
	Theorem \ref{thm:DP_guarnatee} holds with the following improved bounds on $\overline{W}_{1}$ and $\overline{W}_{2}$:
	\begin{enumerate}[label=(\roman*)]
		\item If $\w(\mathcal{D})$ is the IPW with either randomized trial or known regression parameter, then $\overline{W}_{1}=O(1)$ and $\overline{W}_{2}=O(\sqrt{n})$. 
		
		\item If $\w(\mathcal{D})$ is the IPW with estimated regression parameter, then \\ $\overline{W}_{1}=O\!\left(\sqrt{n}\land\frac{\sqrt{n}}{\lambda_{\min}(\hat{\Sigma})+\lambda_{\textup{IPW}}}+1\right)$ and $\overline{W}_{2}=O\!\left(\sqrt{n}\land\frac{\sqrt{n}}{\lambda_{\min}(\hat{\Sigma})+\lambda_{\textup{IPW}}}+\sqrt{n}\right)$.
		
		\item If $\w(\mathcal{D})$ is the MMD, then
		$\overline{W}_{1}=O\!\left(\frac{n^{3/2}}{\lambda_{\textup{MMD}}}+1\right)$ and $\overline{W}_{2}=O\!\left(\frac{n^{3/2}}{\lambda_{\textup{MMD}}}+\sqrt{n}\right)$.
		
		\item If $\w(\mathcal{D})$ is the EBW, then 
		$\overline{W}_{1}=O\!\left(\sqrt{n}\land\frac{\sqrt{n}}{\lambda_{\min}(\hat{\Sigma})+\lambda_{\textup{EBW}}}+1\right)$ and \\$\overline{W}_{2}=O\!\left(\sqrt{n}\land\frac{\sqrt{n}}{\lambda_{\min}(\hat{\Sigma})+\lambda_{\textup{EBW}}}+\sqrt{n}\right)$.
	\end{enumerate}
\end{corollary}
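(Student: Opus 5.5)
The plan is to derive each case directly from the definitions of $W_1$ and $W_2$ in \eqref{eq:def_W1_W2}, using the $\ell_2$ stability bounds of Theorem~\ref{thm:stability_weights_gen} together with uniform bounds on the maximum weight. Write $\Delta:=\|\w-\w'\|_2$. Cauchy--Schwarz gives $\|\w-\w'\|_1\le\sqrt{n}\,\Delta$, and trivially $\|\w-\w'\|_0\le n$. Hence
\[
W_1\le \sqrt{n}\,\Delta+2\max_i w_i, \qquad W_2\le \sqrt{(\Delta^2+2\max_i w_iw_i')(1+n)}.
\]
If we also have a uniform bound $\max_i w_i(\mathcal D)\le R=O(1)$, then $W_1=O(\sqrt n\,\Delta+1)$ and $W_2=O(\sqrt n\,\Delta+\sqrt n)$. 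This is exactly the shape of cases (ii)--(iv) once $\Delta$ is replaced by the rates in \eqref{eq:weight_stability_bd_improved}.

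\emph{Case (i).} The bound above would only give $W_1=O(\sqrt n)$, so we need a sharper argument. For IPW with known propensities (randomized trial, or known $\beta$), $w_i=n\cdot\frac{1/\pi(A_i,X_i)}{\sum_j 1/\pi(A_j,X_j)}$ (or the unnormalized $1/\pi$). Changing one record changes one raw weight directly; under normalization it also rescales every other weight by a factor $1+O(1/n)$, using positivity $\pi\ge c>0$. So $\w-\w'$ has one $O(1)$ coordinate plus $n$ coordinates of size $O(1/n)$, which gives $\|\w-\w'\|_1=O(1)$ and therefore $W_1=O(1)$. For $W_2$ the factor $1+\|\w-\w'\|_0$ may be $n+1$, while $\Delta^2+2\max w_iw_i'=O(1)$, so $W_2=O(\sqrt n)$. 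I will read the $\ell_1$ bound off the proofs of Propositions~\ref{prop:stability_IPW_randomized_trial} and~\ref{prop:IPW_known_beta}, or re-derive it from the explicit formula.

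\emph{Cases (ii)--(iv).} I substitute the rates from Theorem~\ref{thm:stability_weights_gen}. For IPW with estimated parameters and for EBW, $\Delta=O(1\wedge (\lambda_{\min}(\hat\Sigma)+\lambda)^{-1})$, which yields the stated forms. For MMD, $\Delta=O(n/\lambda_{\textup{MMD}})$ gives $\sqrt n\,\Delta=O(n^{3/2}/\lambda_{\textup{MMD}})$.

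\emph{Main obstacle.} The delicate step is the uniform bound $\max_i w_i=O(1)$, which the additive constants $+1$ and $+\sqrt n$ depend on. For IPW it follows from positivity together with bounded covariates and a bounded (regularized) logistic parameter. For EBW, the dual variable is bounded via strong convexity on a bounded domain $\Xi$, and $F$ is a softmax with bounded exponents. For MMD, I would check whether the paper's feasible set imposes a cap. If it does not, I would fall back on the fact that $\max_i w_iw_i'$ enters only under the square root alongside $\Delta^2$, so that the $n^{3/2}/\lambda_{\textup{MMD}}$ term absorbs the loss.
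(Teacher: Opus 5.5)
Your proposal is correct and follows essentially the route the paper indicates. You bound $\|\w-\w'\|_1\le\sqrt{n}\,\|\w-\w'\|_2$ and $\|\w-\w'\|_0\le n$, plug in the rates of Theorem~\ref{thm:stability_weights_gen}, and use $O(1)$ maximum-weight bounds. These come from $\|\x_i\|_2\le M$ and the parameter radius for IPW. For EBW they come directly from the dual ball $\|\blambda\|_2\le R_{\textup{EBW}}$, which gives $\max_i w_i\le r_q e^{2R_{\textup{EBW}}}$ with no appeal to strong convexity. For MMD they come from the explicit cap $\|\w\|_\infty\le R_{\textup{MMD}}$ in \eqref{eq:MMD_def}. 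Case (i) does need your direct $\ell_1$ computation: the normalizing sum is at least $n$ and changes by only $O(1)$, so each untouched coordinate moves by $O(1/n)$.

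Your MMD fallback would not have worked. The term $\max_i(w_i\wedge w_i')$ enters $W_1$ additively, not under a square root, so without the cap you would only get $\overline{W}_1=O(n^{3/2}/\lambda_{\textup{MMD}}+n)$. The fallback is unnecessary, though, since the cap is part of the paper's formulation and of the hypotheses of Proposition~\ref{prop:MMD_stability}.
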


Items (ii) and (iv) are structurally identical, reflecting the fact that both IPW with estimated logistic parameters and EBW achieve $O(1)$ stability when their respective regularization and covariance conditions hold. The key distinction is that the EBW bound arises from the tighter Lipschitz constant of the softmax normalization ($O(1/n)$ versus $O(1/\sqrt{n})$ for IPW), which makes the $O(1/\sqrt{n})$ parameter-shift term in the EBW stability bound vanish, whereas for IPW the analogous term remains $O(1)$ and is absorbed into the leading constant. In contrast, item (iii) shows that MMD requires $\lambda_{\textup{MMD}} = O(n)$ merely to achieve $O(\sqrt{n})$ noise scaling, reflecting the global coupling inherent in the kernel Gram matrix structure. In particular, if $\lambda_{\min}(\hat{\Sigma})$ is bounded away from zero, then the bounds on $\overline{W}_{1}$ in items (ii) and (iv) simplify to $O\left(\frac{\sqrt{n}}{1+\lambda_{\textup{IPW}}} + 1\right)$ and $O\left(\frac{\sqrt{n}}{1+\lambda_{\textup{EBW}}} + 1\right)$, respectively. For fixed or small regularization parameters, both scale as $O(\sqrt{n})$, but the constants implicit in the EBW bound are more favorable due to the tighter underlying stability.

\subsection{Stability of Weights in Randomized Trials}

First, we look at two instances of covariate balancing weights that can be found without any additional optimization procedure in the randomized trial setting. These are IPW under a randomized trial and IPW with a known regression parameter.

\subsubsection{Stability of IPW under Randomized Trial}

Consider an observed dataset $\mathcal{D}:=(\x_{i}, a_{i})_{i=1}^{n}$ from a randomized trial, where $\x_{i}$ denotes the covariate vector of patient $i$ and $a_{i}\in\{0,1\}$ denotes the treatment assignment for patient $i$. Define the \textit{propensity score} by
\begin{align}
	\pi(a\mid \x)=\P(A=a\mid X=\x), \qquad a\in\{0,1\}.
\end{align}
In a randomized trial, the treatment assignment is independent of the covariates, so the propensity score does not depend on $\x$. Hence, for each $a\in\{0,1\}$, $\pi(a\mid \x)=p_a, \text{for all } \x$, where $p_a=\P(A=a)$ is the marginal probability of receiving treatment $a$.

The IPW vector $\w=(w_1,\dots,w_n)$ is then defined as
\begin{align}
	\w
	=
	C\cdot
	\left(
	\frac{1}{p_{a_1}},\ldots,\frac{1}{p_{a_n}}
	\right),
\end{align}
where $p_{a_i}$ is the probability of receiving the observed treatment $a_i$, and $C$ is a normalization constant chosen so that $\sum_{i=1}^n w_i=n$, namely,
$
C=\frac{n}{\sum_{i=1}^n p_{a_i}^{-1}}.
$

Now consider a perturbed dataset $\mathcal{D}'$ of size $n$ that differs from $\mathcal{D}$ only in the last observation, with $(\x'_n, a'_n)$ replacing $(\x_n, a_n)$. The new IPW vector, say $\w'$, will differ from the previous one $\w$ if $a_{n}\ne a_{n}'$ and $p_{0}\ne p_{1}$. In this case, both the normalization constant $C$ and the last coordinate of the unnormalized weight vectors, $\frac{1}{p_{a_{n}}}$ and $\frac{1}{p_{a_{n}'}}$, change. We can bound their difference as follows.

\begin{prop}[Stability of IPW under Randomized Trial]\label{prop:stability_IPW_randomized_trial}
	
	Let $\w$ and $\w'$ denote the normalized IPW vectors under a randomized trial corresponding to datasets $\mathcal{D}$ and $\mathcal{D}'$ of size $n$ that differ at one point, as above. Then
	\begin{align}
		\| \w - \w' \|_{2} \le  \frac{\sqrt{2} \cdot |p_1 - p_0|}{p_0 \wedge p_1} \, \frac{n}{n-1}.
	\end{align}
	In particular, if the treatment assignment is unchanged in the differing record, then $\w = \w'$.
\end{prop}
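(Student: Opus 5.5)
The argument is a direct computation. The weight vector depends on the data only through the treatment labels, so we write $u_i := 1/p_{a_i}$ for the unnormalized weights and $S := \sum_{i=1}^n u_i$, so that $w_i = n u_i/S$. The primed quantities $u_i'$, $S'$ are defined the same way. The final claim is immediate: if $a_n = a_n'$, then $u = u'$, hence $S = S'$ and $\w = \w'$. The same conclusion holds when $p_0 = p_1$, in which case the bound is $0$ and is consistent. So we may assume $a_n \ne a_n'$. Without loss of generality take $a_n = 0$ and $a_n' = 1$; the other case follows by swapping the roles of $\mathcal{D}$ and $\mathcal{D}'$, and the bound is symmetric in them.

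Next we split the difference into the shared coordinates and the changed coordinate. For $i<n$ we have $u_i = u_i'$, so
\[
w_i - w_i' = n u_i\Bigl(\frac1S - \frac1{S'}\Bigr) = \frac{n u_i (S'-S)}{S S'},
\]
where $S' - S = 1/p_1 - 1/p_0 = (p_0-p_1)/(p_0p_1)$. For $i=n$ we have $w_n - w_n' = n(u_n S' - u_n' S)/(SS')$. Writing $T := \sum_{i<n} u_i$, so that $S = T + u_n$ and $S' = T + u_n'$, this simplifies to
\[
w_n - w_n' = \frac{nT(u_n - u_n')}{SS'}.
\]
Hence
\[
\|\w-\w'\|_2^2 = \frac{n^2 (u_n-u_n')^2}{(SS')^2}\Bigl(\sum_{i<n}u_i^2 + T^2\Bigr) \le \frac{2n^2 (u_n-u_n')^2\, T^2}{(SS')^2},
\]
using $\sum_{i<n} u_i^2 \le T^2$ since the $u_i$ are nonnegative.

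The final step is to bound $S$ and $S'$ from below. We use $S, S' \ge T$ and $T \ge (n-1)\min_a(1/p_a)$, since $p_0, p_1 \le 1$. This gives
\[
\|\w-\w'\|_2 \le \frac{\sqrt2\, n\,|u_n-u_n'|}{T} \le \frac{\sqrt2\, n\,|p_1-p_0|}{p_0p_1\,(n-1)\min_a(1/p_a)}.
\]
Since $\min_a(1/p_a) = 1/(p_0\vee p_1)$ and $p_0 p_1 = (p_0\vee p_1)(p_0\wedge p_1)$, the right-hand side equals $\frac{\sqrt2\,|p_1-p_0|}{p_0\wedge p_1}\cdot\frac{n}{n-1}$, which is the claimed bound.

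The delicate part is choosing a lower bound for $SS'$ that keeps only a factor $T$ in the denominator while retaining the right constant. I would bound $SS' \ge T\cdot T$, cancel one factor of $T$, and control the remaining factor of $T$ with the crude estimate $T \ge (n-1)/(p_0\vee p_1)$. I expect this to be the main obstacle, but it goes through. One could obtain something sharper, but the stated constant is exactly what these crude estimates give, so I would not try to improve on it.
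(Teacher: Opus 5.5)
Your proof is correct. The identity $w_n-w_n'=nT(u_n-u_n')/(SS')$, together with $\sum_{i<n}u_i^2\le T^2$ (which produces the $\sqrt{2}$) and $SS'\ge T^2$ with $T\ge (n-1)/(p_0\vee p_1)$ (which produces $n/(n-1)$), gives exactly the stated constant. The paper's appendix proof is not included in this source, so I cannot compare against it directly, but these are precisely the constants this direct normalize-and-compare computation yields.

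Two small nits. The phrase ``since $p_0,p_1\le 1$'' is not the reason $T\ge (n-1)\min_a p_a^{-1}$ holds; it holds termwise because each $u_i\ge\min_a p_a^{-1}$. Also, the bound implicitly requires $n\ge 2$.
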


This demonstrates that even in randomized trials, there remains a non-zero difference between the weight vectors when treatment assignments change. It highlights our subsequent development of a carefully designed privacy-preserving algorithm for ITR learning that addresses these sensitivity issues.

\subsubsection{Stability of IPW with Known Regression Parameters}

Next, consider the correctly specified case via logistic regression. Namely, we assume the true propensity score model is known and is given as
\begin{align}\label{eq:true_propensity_score}
	\mathbb{P}(A_i = 1 \mid X = \x_i) = \frac{\exp(\langle \x_{i},\blambda^* \rangle)}{1 + \exp(\langle \x_{i},\blambda^* \rangle)},
\end{align}
where $\blambda^* \in \mathbb{R}^p$ is the known true regression parameter vector. Under this propensity score model, the IPW for patient $i$ based on their observed treatment $a_i \in \{0,1\}$ is defined as
\begin{align}\label{eq:inverse_weight}
	\frac{1}{\mathbb{P}(A_i = a_i \mid X = \x_i)} = 1 + \exp(-(2a_i-1)\langle \x_i, \blambda^* \rangle).
\end{align}
Let $f_i = 1 + \exp(-(2a_i-1)\langle \x_i, \blambda^* \rangle)$. The weight vector $\w=(w_{1},\dots,w_{n})$ is obtained by normalizing these so their sum is $n$
\begin{align}
	w_{i} = n \frac{f_i}{\sum_{j=1}^{n} f_j}, \quad i=1,2,\dots,n.
\end{align}

\begin{prop}[Stability of IPW with Known Regression Parameters] \label{prop:IPW_known_beta}
	Let $\w$ and $\w'$ denote the normalized IPW vectors for datasets $\mathcal{D}$ and $\mathcal{D}'$ that differ at a single observation, as above, under the observational study with known $\blambda^{*}$. Suppose that the $L_{2}$-norm of each covariate vector is bounded by some constant $M$ (i.e., $\|\x_i\|_2 \le M$) and the true parameter is bounded by $R_\textup{IPW}$ (i.e., $\|\blambda^*\|_2 \le R_\textup{IPW}$). Then
	\begin{align}
		\|\w - \w' \|_2 \le \sqrt{2} \exp(MR_\textup{IPW}).
	\end{align}
\end{prop}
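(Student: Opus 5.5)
The plan is a direct computation; no earlier result is needed. Write $f_i = 1+\exp(-(2a_i-1)\langle \x_i,\blambda^*\rangle)$ for $\mathcal{D}$, and assume without loss of generality that $\mathcal{D}'$ differs only in the last record, so $f_i'=f_i$ for $i<n$ and $f_n'$ may differ from $f_n$. By Cauchy--Schwarz, $|\langle \x_i,\blambda^*\rangle|\le MR_{\textup{IPW}}$. Hence every $f_i$ and $f_n'$ lies in $[1+e^{-MR_{\textup{IPW}}},\,1+e^{MR_{\textup{IPW}}}]$. This gives two facts: $|f_n-f_n'|\le e^{MR_{\textup{IPW}}}-e^{-MR_{\textup{IPW}}}\le e^{MR_{\textup{IPW}}}$, and $f_i\ge 1$ for all $i$.

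Next I compute the difference exactly. Let $T:=\sum_{i<n} f_i$, $S:=T+f_n$ and $S':=T+f_n'$.
\begin{itemize}
\item For $i<n$, $w_i-w_i' = n f_i\,(1/S-1/S') = n f_i (f_n'-f_n)/(SS')$.
\item For the last coordinate, $w_n-w_n' = n\bigl(f_nS'-f_n'S\bigr)/(SS') = nT(f_n-f_n')/(SS')$, since the $f_nf_n'$ cross terms cancel.
\end{itemize}
Therefore
\[
\|\w-\w'\|_2^2=\frac{n^2(f_n-f_n')^2}{(SS')^2}\Bigl(T^2+\sum_{i<n}f_i^2\Bigr)\le \frac{2n^2(f_n-f_n')^2\,T^2}{(SS')^2},
\]
using $\sum_{i<n} f_i^2\le T^2$, which holds because the $f_i$ are positive.

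To finish, I use $T\le S$ and $T\le S'$, so that $T^2\le SS'$. The bound becomes $\|\w-\w'\|_2^2\le 2n^2(f_n-f_n')^2/(SS')$. Since every $f_i\ge1$, we have $S,S'\ge n$, and so $\|\w-\w'\|_2\le \sqrt2\,|f_n-f_n'|\le \sqrt2\, e^{MR_{\textup{IPW}}}$.

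The only step needing care is the algebra for the last coordinate. The cancellation there gives the common factor $(f_n-f_n')/(SS')$ shared with the other coordinates. Once that factor is extracted, the lower bound $f_i\ge1$ absorbs the factor $n^2$. Everything else is routine.
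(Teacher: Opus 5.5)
Your proof is correct and complete. The exact factorization is right: $w_i-w_i'=n f_i(f_n'-f_n)/(SS')$ for $i<n$ and $w_n-w_n'=nT(f_n-f_n')/(SS')$. Combined with $\sum_{i<n}f_i^2\le T^2\le SS'$ and $SS'\ge n^2$, it gives $\|\w-\w'\|_2\le\sqrt{2}\,|f_n-f_n'|\le\sqrt{2}\,e^{MR_{\textup{IPW}}}$. The WLOG reduction to the last record is also fine, because the weight map is permutation-equivariant and the norm is permutation-invariant.

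The paper's appendix proof is omitted in this version, so I cannot confirm that it argues the same way. Your direct computation is nonetheless the natural route for this statement.

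You can also sharpen the result at no extra cost. Using $S,S'\ge n(1+e^{-MR_{\textup{IPW}}})$ in place of $S,S'\ge n$ gives $\|\w-\w'\|_2\le\sqrt{2}\,(e^{MR_{\textup{IPW}}}-e^{-MR_{\textup{IPW}}})/(1+e^{-MR_{\textup{IPW}}})=\sqrt{2}\,(e^{MR_{\textup{IPW}}}-1)$. This bound correctly vanishes as $MR_{\textup{IPW}}\to 0$. The same algebra with $f_i=1/p_{a_i}$ recovers Proposition~\ref{prop:stability_IPW_randomized_trial}, without its $n/(n-1)$ factor.
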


\subsection{Stability of Weights in Observational Studies}

The more interesting and more adaptive framework is when the covariate balancing weight vector $\w$ for a dataset $\mathcal{D}$ is parameterized by $\blambda$, and such a parameter is found by a separate convex optimization using the dataset $\mathcal{D}$. This covers the most interesting cases, including (1) IPW under observational study with estimated regression parameters, (2) MMD, and (3) EBW.

\subsubsection{Stability of IPW under Observational Study with Estimated Regression Parameters}

Suppose that we use logistic regression to estimate the propensity scores for all individuals. 
The estimated regression parameter $\hat{\blambda}$ is given by 
\begin{align}\label{eq:IPW_beta_logistic_loss}
	\hat{\blambda} = \argmin_{\| \blambda \|_{2} \le R_\textup{IPW}} \bigg[  \mathcal{E}^{i}(\blambda;\mathcal{D}):= \frac{1}{n} \sum_{i=1}^{n}  \underbrace{\log\left(1 + \exp(\langle \x_{i},\blambda \rangle \right)) - a_{i} \langle \x_{i},\blambda \rangle  }_{=:\ell_\textup{nll}(\blambda;\, \x_{i},a_{i})} + \frac{\lambda_{\textup{IPW}}}{2}  \| \blambda \|_{2}^{2}   \bigg],
\end{align}
where $\ell_\textup{nll}(\blambda;\, \x_{i},a_{i})$ above is the negative log-likelihood of observed data $(\x_{i},a_{i})$ under $\blambda$. To improve stability—a common practice when fitting IPW—we have added an $L_{2}$-regularization term with coefficient $\lambda_{\textup{IPW}}\ge 0$. Once we compute $\hat{\blambda}$, we then assign the normalized IPW weights for all $n$ patients as
\begin{align}\label{eq:IPW_weight_def}
	\w = (w_{1},\dots,w_{n}),\qquad \textup{where}\quad  w_{i}:= n \frac{
		1 + \exp(-(2a_i-1)\langle \x_i, \hat{\blambda} \rangle)}{\sum_{j=1}^{n} (
		1 + \exp(-(2a_j-1)\langle \x_j, \hat{\blambda} \rangle))}
\end{align}

\begin{prop}[Stability of IPW under Observational Study with Estimated Regression Parameters]\label{prop:IPW_estimated_stability}
	Let $\w$ and $\w'$ denote the IPW weights for datasets $\mathcal{D}$ and $\mathcal{D}'$ that differ at one point, as above, under the observational study with estimated $\hat{\blambda}$ solving \eqref{eq:IPW_beta_logistic_loss}. Suppose that the $L_{2}$-norm of each covariate vector is uniformly bounded by some constant $M$ (i.e., $\|\x_i\|_2 \le M$), the $L_{2}$-norm of the regression coefficients are bounded by $R_\textup{IPW}$ (i.e., $\|\blambda\|_2 \le R_\textup{IPW}$), and that the sample covariance matrix $\hat{\Sigma}:=n^{-1}\sum_{i=1}^{n} \x_{i}\x_{i}^{\top}$ satisfies $\lambda_{\min}(\hat{\Sigma})>0$. Define
	\begin{align}
		\rho := \frac{\exp(-MR_\textup{IPW})}{(1+\exp(-MR_\textup{IPW}))^{2}} \, \lambda_{\min}(\hat{\Sigma}) + \lambda_\textup{IPW}, \qquad \delta_{I} := \left( R_\textup{IPW} \wedge \, \frac{M/2}{\rho}\right).
	\end{align}
	Then,
	\begin{align}\label{eq:IPW_sensitivity_bd}
		\| \w(\mathcal{D}) - \w(\mathcal{D}')\|_{2} &\le \frac{4M^2 \exp(MR_\textup{IPW})}{\rho} + 2M \delta_{I} \exp(M \delta_{I}) \left(1+\frac{1+\exp(M\delta_{I})}{2n}\right).
	\end{align}
\end{prop}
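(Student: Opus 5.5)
We split the weight difference into two pieces: one from the change in the estimated parameter $\hat{\blambda}\mapsto\hat{\blambda}'$, and one from the direct change of the $n$-th record. Write $f_i(\blambda):=1+\exp(-(2a_i-1)\langle \x_i,\blambda\rangle)$ and $\w(\blambda;\mathcal{D}):=n f(\blambda)/\sum_j f_j(\blambda)$. By the triangle inequality,
\begin{align*}
\|\w(\mathcal{D})-\w(\mathcal{D}')\|_2 \le \|\w(\hat{\blambda};\mathcal{D})-\w(\hat{\blambda}';\mathcal{D})\|_2 + \|\w(\hat{\blambda}';\mathcal{D})-\w(\hat{\blambda}';\mathcal{D}')\|_2 .
\end{align*}
We plan to show that the first term produces $4M^2e^{MR_{\textup{IPW}}}/\rho$ and the second produces the $\delta_I$-dependent term. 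It may be cleaner to recenter so that the second term is evaluated at a parameter of norm at most $\delta_I$. The bound $\|\hat{\blambda}-\hat{\blambda}'\|\le \delta_I$ suggests organizing the argument that way instead, and we will adjust once the constants are checked.

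\textbf{Step 1 (parameter stability).} On the ball $\|\blambda\|\le R_{\textup{IPW}}$, the Hessian of $\mathcal{E}^{i}$ is $n^{-1}\sum_i \sigma'(\langle\x_i,\blambda\rangle)\x_i\x_i^\top+\lambda_{\textup{IPW}}\I$. Since $|\langle \x_i,\blambda\rangle|\le MR_{\textup{IPW}}$, we have $\sigma'\ge e^{-MR}/(1+e^{-MR})^2$, so $\mathcal{E}^i$ is $\rho$-strongly convex. The gradients of $\mathcal{E}^i(\cdot;\mathcal{D})$ and $\mathcal{E}^i(\cdot;\mathcal{D}')$ differ only in the $n$-th summand, each of norm at most $M/n$ since $|\sigma-a|\le 1$. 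The usual first-order optimality argument for constrained strongly convex problems (the one behind Theorem~\ref{thm:stability_weights_gen1}) then gives $\|\hat{\blambda}-\hat{\blambda}'\|\le 2M/(n\rho)$. It also gives the trivial bound $2R_{\textup{IPW}}$, and the crude bound $M/(2\rho)$ that accounts for the $\delta_I$ definition.

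\textbf{Step 2 (Lipschitz constant of the normalization map).} Next we compute the Jacobian of $\blambda\mapsto \w(\blambda;\mathcal{D})$. With $g_i=\nabla_{\blambda} f_i$ satisfying $\|g_i\|\le Me^{MR}$, we have
\begin{align*}
\partial \w = \frac{n}{S}\Big(g - \frac{f}{S}\textstyle\sum_j g_j\Big), \qquad S=\textstyle\sum_j f_j\ge 2n .
\end{align*}
Bounding the operator norm of each piece by $\sqrt{n}\,Me^{MR}/2$ shows that the map is $O(\sqrt{n}Me^{MR})$-Lipschitz. Combining this with Step 1 gives a term of order $M^2e^{MR}/(\sqrt{n}\rho)$, which is at most $4M^2e^{MR}/\rho$. (The $1/\sqrt{n}$ Lipschitz factor mentioned after Corollary~\ref{cor:DP_improved} matches this.)

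\textbf{Step 3 (direct record change, the main obstacle).} With $\blambda$ fixed, only $f_n$ changes, to $f_n'$, and the normalizer $S$ changes to $S'$. We write
\begin{align*}
w_i - w_i' = n f_i\,\frac{S'-S}{SS'} \quad (i<n), \qquad w_n-w_n' = \frac{n f_n}{S}-\frac{nf_n'}{S'} .
\end{align*}
The key point is that $f_n$ and $f_n'$ must be controlled by something small, not by $e^{MR}$. When the parameter has norm at most $\delta_I$, we get $|f-2|\le e^{M\delta_I}-1\le M\delta_I e^{M\delta_I}$ and $|f_n-f_n'|\le 2M\delta_I e^{M\delta_I}$. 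Summing the squares over $i<n$ gives roughly $2M\delta_Ie^{M\delta_I}\cdot\frac{1+e^{M\delta_I}}{2n}$, after using $\sum_{i<n} f_i^2\le n(1+e^{M\delta_I})^2$ and $SS'\ge 4n^2$. The $n$-th coordinate contributes the leading $2M\delta_Ie^{M\delta_I}$.

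Getting exactly this form requires the careful recentering described above. Concretely, the second term has to be evaluated at an effective parameter of size $\delta_I$, or else $\hat{\blambda}$ itself has to be bounded by $\delta_I$, using $\|\hat{\blambda}\|\le\|\nabla\mathcal{E}^i(0)\|/\rho\le M/(2\rho)$ from strong convexity with $|\sigma(0)-a|=1/2$. The latter route is the one I would try first, since it gives exactly $\delta_I=R\wedge M/(2\rho)$.
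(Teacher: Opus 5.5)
Your ingredients are the right ones:
\begin{itemize}
\item the split into a parameter-shift term and a record-shift term;
\item the bound $\|\hat{\blambda}-\hat{\blambda}'\|_2\le 2M/(n\rho)$, from the variational inequalities using strong convexity of $\mathcal{E}^{i}(\cdot;\mathcal{D})$ only;
\item the bound $\|\hat{\blambda}\|_2\le\|\nabla\mathcal{E}^{i}(\mathbf{0};\mathcal{D})\|_2/\rho\le M/(2\rho)$, which together with the constraint gives $\|\hat{\blambda}\|_2\le\delta_I$.
\end{itemize}

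\textbf{Main gap: the end of Step 3.} Write $\Delta f:=f_n-f_n'$. The off-coordinate block $(w_i-w_i')_{i<n}$ has $\ell_2$-norm $n|\Delta f|(\sum_{i<n}f_i^2)^{1/2}/(SS')$. This is of order $|\Delta f|/\sqrt{n}$, not $|\Delta f|(1+e^{M\delta_I})/(2n)$. Your own inputs give $|\Delta f|(1+e^{M\delta_I})/(4\sqrt{n})$. When all $f_i\approx 2$, the block really has norm $\approx|\Delta f|/(2\sqrt{n})$. Adding it to the $n$-th coordinate by the triangle inequality therefore produces a factor $1+O(n^{-1/2})$, not the stated $1+\frac{1+e^{M\delta_I}}{2n}$.

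The missing idea is to combine the two blocks in quadrature. Let $T:=\sum_{i<n}f_i$, so $S=T+f_n$ and $S'=T+f_n'$. Then $w_n-w_n'=nT\,\Delta f/(SS')$ and $w_i-w_i'=-nf_i\,\Delta f/(SS')$ for $i<n$, so
\[
\|\w(\blambda;\mathcal{D})-\w(\blambda;\mathcal{D}')\|_2^2=\frac{n^2(\Delta f)^2}{S^2S'^2}\Big(T^2+\sum_{i<n}f_i^2\Big)\le(\Delta f)^2\Big(1+\frac{\max_{i<n}f_i}{n}\Big),
\]
using $T\le S$, $S,S'\ge n$ and $\sum_{i<n}f_i^2\le(\max_{i<n}f_i)\,T$. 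Now apply $\sqrt{1+u}\le 1+u/2$, $\max_i f_i\le 1+e^{M\delta_I}$ and $|\Delta f|\le 2\sinh(M\delta_I)\le 2M\delta_I e^{M\delta_I}$. This gives exactly the second term of the statement.

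\textbf{Two smaller repairs.}

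First, $S\ge 2n$ (and hence $SS'\ge 4n^2$) is false. Indeed $f_j=1+\exp(-(2a_j-1)\langle\x_j,\blambda\rangle)<2$ whenever the fitted probability of the observed arm exceeds $1/2$. You only have $S\ge n$, or more precisely $\max_j f_j/\min_j f_j\le e^{M\|\blambda\|_2}$. In Step 2, use $|\partial_{\mathbf{v}}S|\le M\sum_j(f_j-1)\le MS$ for unit $\mathbf{v}$. This gives $\|\partial_{\mathbf{v}}\w\|_2\le (2nM/S)\|f\|_2\le 2\sqrt{n}\,M e^{MR_{\textup{IPW}}}$; a naive bound on the second Jacobian piece would pick up $e^{2MR_{\textup{IPW}}}$. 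The parameter-shift term is then at most $4M^2e^{MR_{\textup{IPW}}}/(\sqrt{n}\rho)$, which is dominated by the stated first term.

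Second, evaluate the record-shift term at $\hat{\blambda}$ rather than $\hat{\blambda}'$, i.e.\ use
\[
\|\w(\hat{\blambda};\mathcal{D})-\w(\hat{\blambda};\mathcal{D}')\|_2+\|\w(\hat{\blambda};\mathcal{D}')-\w(\hat{\blambda}';\mathcal{D}')\|_2 .
\]
The reason is that $\rho$ is built from $\hat{\Sigma}$ of $\mathcal{D}$. Only $\mathcal{E}^{i}(\cdot;\mathcal{D})$ is known to be $\rho$-strongly convex, so the bound $M/(2\rho)$ applies to $\hat{\blambda}$. For $\hat{\blambda}'$ you would only get $\delta_I+2M/(n\rho)$. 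Nothing is lost by this ordering: the Lipschitz bound for $\w(\cdot;\mathcal{D}')$ on the $R_{\textup{IPW}}$-ball needs only $\|\x_n'\|_2\le M$.
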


\subsubsection{Stability of MMD Weights}\label{sec:MMD}

MMD is a particular type of distributional covariate balancing method (see \cite{gretton2012kernel, chen2024robust}), which are obtained as follows. 

Let $\mathcal{X}$ be a non-empty set and let $\mathcal{P}(\mathcal{X})$ denote the set of all probability measures defined on $\mathcal{X}$. Let $\mathcal{H}$ be a reproducing kernel Hilbert space with associated kernel $K: \mathcal{X} \times \mathcal{X} \rightarrow \mathbb{R}$. For $P, Q \in \mathcal{P}(\mathcal{X})$, the MMD between $P$ and $Q$ is defined as
\begin{align}
	\text{MMD}_{\mathcal{H}}(P, Q) := \sup_{f \in \mathcal{H}, \|f\|_{\mathcal{H}} \leq 1} |\mathbb{E}_{X \sim P}[f(X)] - \mathbb{E}_{Y \sim Q}[f(Y)]|.
\end{align}
It is well-known that (e.g., \cite[Lemma 6]{gretton2012kernel})
\begin{align}\label{eq:MMD_def_equiv}
	\text{MMD}_{\mathcal{H}}^2(P, Q) = \mathbb{E}_{X,X' \sim P}[K(X,X')]  + \mathbb{E}_{Y,Y' \sim Q}[K(Y,Y')] - 2\mathbb{E}_{X \sim P, Y \sim Q}[K(X,Y)].
\end{align}
where $X, X'$ are independent random variables drawn from $P$, and $Y, Y'$ are independent random variables drawn from $Q$. 

We adopt a special case of the three-way MMD balancing weights proposed in \cite{chen2024robust}.
Let $\mathcal{D}=\{(\x_i,a_i)\}_{i=1}^n$ denote the observed sample and let $\mathcal{S}:=\{1,\dots,n\}$
be its index set. Write $\mathcal{S}_0:=\{i\in\mathcal{S}:a_i=0\}$ and $\mathcal{S}_1:=\{i\in\mathcal{S}:a_i=1\}$
for the control and treatment index sets. We compute weights $\w(\mathcal{D})=(\w_0,\w_1)$ by minimizing a
linear combination of squared MMD terms among the three empirical covariate distributions
$\P_{\mathcal{S}_0}^{\w_0}$, $\P_{\mathcal{S}_1}^{\w_1}$, and $\P_{\mathcal{S}}$:
\begin{align}
	\w(\mathcal{D}) &:= \frac{1}{2}\argmin_{\w=(\w_{0},\w_{1})} \alpha \textup{MMD}^{2}_{\mathcal{H}}(\P^{\w_{0}}_{\mathcal{S}_{0}}, \P_{\mathcal{S}}) + \alpha \textup{MMD}^{2}_{\mathcal{H}}(\P^{\w_{1}}_{\mathcal{S}_{1}},\, \P_{\mathcal{S}}) \label{eq:MMD_weight_def} \\
	&\hspace{3cm} + (1-\alpha)  \textup{MMD}^{2}_{\mathcal{H}}(\P^{\w_{0}}_{\mathcal{S}_{0}},\,\P^{\w_{1}}_{\mathcal{S}_{1}} ) + \frac{\lambda_\textup{MMD}}{n^2} \| \w \|_{2}^{2} \nonumber \\
	&\textup{subject to $\sum_{i\in \mathcal{S}_{0}} \w_{0i}= \sum_{i\in \mathcal{S}_{1}} \w_{1i}=n$}. \nonumber
\end{align}
where $\P^{\w_{a}}_{\mathcal{S}_{a}}=\sum_{i\in \mathcal{S}_{a}} \frac{w_{ai}}{n}\delta_{\x_i}$ is the weighted empirical
distribution of covariates in group $a\in\{0,1\}$ and $\P_{\mathcal{S}}=\sum_{i\in \mathcal{S}}\frac{1}{n}\delta_{\x_i}$
is the empirical covariate distribution of the full sample, with fixed parameters $\alpha \in [0, 1]$ and $\lambda_\textup{MMD}\ge 0$. 

We note that the formulation in \eqref{eq:MMD_weight_def}, following \cite{chen2024robust}, we first solve for unnormalized weights $\widetilde{\w}=(\widetilde{\w}_0,\widetilde{\w}_1)$ satisfying the group-wise constraints
$\sum_{i\in \mathcal{S}_0} \widetilde{w}_{0i} = \sum_{i\in \mathcal{S}_1} \widetilde{w}_{1i} = 2n$. We then renormalize by a factor of $1/2$ and define
$\w(\mathcal{D}) := \frac{1}{2} \widetilde{\w}(\mathcal{D})$ so that the final weights satisfy the sum-to-$n$ normalization used in our DP-2ERM framework.

Following \cite{chen2024robust}, we can rewrite \eqref{eq:MMD_weight_def} as the following constrained quadratic minimization problem:
\begin{align}\label{eq:MMD_def}
	\w(\mathcal{D}) &=  \argmin_{ \w=(\w_{0},\w_{1}) ,\, \|\w \|_{\infty}\le R_\textup{MMD}} \,\, \left[ l(\w;\mathcal{D}):=\w^{\top} \A \w - 2 \w^{\top} \b \right],
\end{align}
where matrix $\A=\A(\mathcal{D},\alpha,\lambda_\textup{MMD})$ and vector $\b=\b(\mathcal{D},\alpha)$ are defined as
\begin{align}\label{eq:MMD_A_b_def}
	\A = \frac{1}{n^{2}}\quad 
	\begin{bNiceArray}{c I c}[first-row, first-col, cell-space-top-limit=3pt, extra-margin=4pt]
		&\small{a=0}              &\small{a = 1} \\
		\small{a=0}   &K_{\mathcal{S}_{0}, \mathcal{S}_{0}} + \lambda_\textup{MMD} I  & (\alpha-1) K_{\mathcal{S}_{0}, \mathcal{S}_{1}}   \\ \H
		\small{a=1}   &(\alpha-1) K_{\mathcal{S}_{1}, \mathcal{S}_{0}}   &K_{\mathcal{S}_{1}, \mathcal{S}_{1}} + \lambda_\textup{MMD} I
	\end{bNiceArray}, \quad
	\b = \frac{\alpha}{n^{2}} \quad 
	\begin{bNiceArray}{c}[first-row, first-col, cell-space-top-limit=3pt, extra-margin=4pt]
		\\
		a=0   &K_{\mathcal{S}_{0}, \mathcal{S}} \mathbf{1} \\ \H
		a=1   &K_{\mathcal{S}_{1}, \mathcal{S}} \mathbf{1}
	\end{bNiceArray},
\end{align}
where for $a, a' \in \{0, 1\}$, 
\begin{align}
	K_{\mathcal{S}_{a}, \mathcal{S}_{a'}} = (K(\x_{i}, \x_{j}))_{i \in \mathcal{S}_{a}, j \in \mathcal{S}_{a'}}, \quad  \textup{and}\quad  K_{\mathcal{S}_{a}, \mathcal{S}} = (K(\x_{i}, \x_{j}))_{i \in \mathcal{S}_{a}, j \in \mathcal{S}}.
\end{align}

If $R_\textup{MMD}>0$ is sufficiently large so that the sup-norm constraint in \eqref{eq:MMD_def} is not effective, then \eqref{eq:MMD_def} is equivalent to  \eqref{eq:MMD_weight_def} according to \cite[Lemma 6]{chen2024robust}. The following result provides a stability bound for MMD.

\begin{prop}[Stability Bound on MMD]\label{prop:MMD_stability}
	Let $\w$ and $\w'$ denote the MMD weights from datasets $\mathcal{D}$ and $\mathcal{D}'$ that differ at one point.
	Assume that $|K(\x_{i},\x_{j})|\le C$ for some constant $C>0$ for all $i,j$. Further assume the entries of
	$\w$ and $\w'$ satisfy $\|\w\|_\infty,\|\w'\|_\infty \le R_\textup{MMD}$. Then
	\begin{align}\label{eq:weight_bound}
		\| \w-\w' \|_{2} \le 2\sqrt{2}(R_\textup{MMD}+1)C n/\lambda_\textup{MMD}.
	\end{align}
\end{prop}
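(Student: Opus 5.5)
The plan is to treat \eqref{eq:MMD_def} as a strongly convex quadratic program over a fixed convex feasible set and compare its solutions for $\mathcal{D}$ and $\mathcal{D}'$ through first-order optimality. One subtlety comes first. When the differing record changes its treatment label, the group partition $\mathcal{S}_0,\mathcal{S}_1$ changes. I would therefore index the weight vector by $i\in\{1,\dots,n\}$, with the group of $i$ determined by $a_i$. The feasible set is then
\[
C=\Big\{\w:\ \sum_{i\in\mathcal{S}_a}w_i=n \text{ for } a=0,1,\ \|\w\|_\infty\le R_\textup{MMD}\Big\}.
\]
I would first handle the case $a_n=a_n'$, where $C$ is common to both problems. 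If the label flips, I would embed both problems in the same $n$-dimensional space. Either I would argue that only the two row/column blocks involving index $n$ and the constraint change, or I would pass through an intermediate dataset and use the triangle inequality, losing at most a constant factor.

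Write $l(\w)=\w^\top\A\w-2\w^\top\b$. The key facts are as follows.

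\textbf{Strong convexity.} The matrix $\A$ is $n^{-2}$ times (a PSD kernel combination plus $\lambda_\textup{MMD}I$). The kernel part
\[
\begin{pmatrix}K_{00}&(\alpha-1)K_{01}\\(\alpha-1)K_{10}&K_{11}\end{pmatrix}
\]
is PSD for $\alpha\in[0,1]$, since it is a convex combination of $\operatorname{diag}(K_{00},K_{11})$ and the full Gram matrix conjugated by $\operatorname{diag}(I,-I)$. Hence $l$ is $2\lambda_\textup{MMD}/n^2$-strongly convex.

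\textbf{Variational inequalities.} For minimizers $\w\in\argmin_C l(\cdot;\mathcal{D})$ and $\w'\in\argmin_C l(\cdot;\mathcal{D}')$, the optimality conditions give
\[
\langle\nabla l(\w;\mathcal{D}),\w'-\w\rangle\ge0,\qquad \langle\nabla l(\w';\mathcal{D}'),\w-\w'\rangle\ge0.
\]
Adding these and inserting $\nabla l(\w';\mathcal{D})$ yields
\[
\frac{2\lambda_\textup{MMD}}{n^2}\|\w-\w'\|_2^2\le\langle\nabla l(\w';\mathcal{D}')-\nabla l(\w';\mathcal{D}),\w-\w'\rangle.
\]
By Cauchy--Schwarz,
\[
\|\w-\w'\|_2\le \frac{n^2}{2\lambda_\textup{MMD}}\,\|\nabla l(\w';\mathcal{D}')-\nabla l(\w';\mathcal{D})\|_2 .
\]
This is Theorem~\ref{thm:stability_weights_gen1} with $F=\mathrm{id}$, except that the gradient difference only needs to be evaluated at the feasible point $\w'$ rather than as a supremum.

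\textbf{Gradient difference.} We have $\nabla l=2(\A\w-\b)$, so the difference is $2[(\A-\A')\w'-(\b-\b')]$. The kernel part of $\A-\A'$, after scaling by $n^2$, is nonzero only in row $n$ and column $n$, with entries bounded by $2C$ in absolute value. Using $\|\w'\|_\infty\le R_\textup{MMD}$:
\[
\|n^2(\A-\A')\w'\|_2^2\le\sum_{i\ne n}(2CR_\textup{MMD})^2+\Big(\sum_j 2C R_\textup{MMD}\Big)^2 .
\]
This step needs care, because the full row $n$ contributes a term of order $nCR_\textup{MMD}$, which is too large unless it is paired against the constraint structure. To handle it, I would use the sum constraints: shifting $\nabla l$ by a multiple of each group's indicator vector does not affect the variational inequality, since $\w-\w'$ has zero sum within each group. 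I would similarly control $n^2(\b-\b')$. Its entry $i\ne n$ changes by $\alpha|K(\x_i,\x_n)-K(\x_i,\x_n')|\le 2C$, and its entry $n$ changes by at most $2Cn$. For the $n$-th coordinate, I would instead use $|w_n-w_n'|\le 2R_\textup{MMD}$ directly in the inner product, which avoids the Cauchy--Schwarz loss there.

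\textbf{Assembly.} Combining these gives a gradient-difference contribution of order $2\sqrt{2}(R_\textup{MMD}+1)C\sqrt{n}\cdot\sqrt{n}/n^2$ in the inner product. After dividing by the strong-convexity constant $2\lambda_\textup{MMD}/n^2$, this produces $\|\w-\w'\|_2\le 2\sqrt2(R_\textup{MMD}+1)Cn/\lambda_\textup{MMD}$, matching the target up to constants. The final normalization $\w=\tfrac12\widetilde\w$ only improves the constant.

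\textbf{Main obstacle.} I expect the hardest part to be the bookkeeping for the single row/column of $\A$ and the single entry of $\b$ that change by $O(n)$ before scaling, together with the label-flip case where the constraint set itself moves. If the projection trick does not absorb row $n$ cleanly, my fallback is the crude bound $\|\cdot\|_2\le\sqrt{n}\,\|\cdot\|_\infty$ over the $n$ changed coordinates. That bound already gives $O(Cn(R_\textup{MMD}+1)/\lambda_\textup{MMD})$, which is the stated rate.
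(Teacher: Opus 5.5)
Your skeleton is right and matches the route the paper indicates, namely Theorem~\ref{thm:stability_weights_gen1} with $F=\mathrm{id}$. The $\alpha$-combined kernel block is positive semidefinite, so $l(\cdot;\mathcal{D})$ is $2\lambda_\textup{MMD}/n^2$-strongly convex. The variational-inequality comparison then gives $\|\Delta\|_2\le \frac{n^2}{2\lambda_\textup{MMD}}\|g\|_2$, where $\Delta:=\w-\w'$ and $g:=\nabla l(\w';\mathcal{D}')-\nabla l(\w';\mathcal{D})=2[(\A'-\A)\w'-(\b'-\b)]$.

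The gap is in the one step that produces the rate. You call the order-$n$ contribution of coordinate $n$ ``too large'' and try to remove it. In fact that coordinate is exactly where the factor $n$ in the target comes from, and your own entrywise bounds already finish the proof. For $i\neq n$, the $i$-th entry of $n^2[(\A'-\A)\w'-(\b'-\b)]$ is at most $2CR_\textup{MMD}+2C$, and the $n$-th entry is at most $2CnR_\textup{MMD}+2Cn$. Hence
\[
\|g\|_2\le \frac{2}{n^2}\cdot 2C(R_\textup{MMD}+1)\sqrt{n^2+n-1}\le \frac{4\sqrt{2}\,C(R_\textup{MMD}+1)}{n},
\]
and multiplying by $n^2/(2\lambda_\textup{MMD})$ gives exactly $2\sqrt{2}(R_\textup{MMD}+1)Cn/\lambda_\textup{MMD}$. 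With your $1/\rho$ constant no halving is needed. Going through the $2/\rho$ of Theorem~\ref{thm:stability_weights_gen1} instead gives $4\sqrt{2}$ for $\widetilde{\w}$, and $\w=\widetilde{\w}/2$ restores $2\sqrt{2}$.

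The devices you propose in place of this computation do not give the stated inequality.
\begin{itemize}
\item A shift by a multiple of a group indicator moves every coordinate of that group by the same amount. It therefore cannot cancel a single large entry.
\item Bounding $g_n(w_n-w_n')$ by $2R_\textup{MMD}|g_n|$ turns the estimate into $\|\Delta\|_2^2\le c_1\|\Delta\|_2+c_2$, with $c_1=2C(R_\textup{MMD}+1)\sqrt{n-1}/\lambda_\textup{MMD}$ and $c_2=4CR_\textup{MMD}(R_\textup{MMD}+1)n/\lambda_\textup{MMD}$. This gives $\|\Delta\|_2\le c_1+\sqrt{c_2}$, a bound of a different shape that does not imply the claim. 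For example, with $\lambda_\textup{MMD}\asymp n^2$ it is of order $n^{-1/2}$, while the claim is of order $n^{-1}$.
\item Your fallback $\|\cdot\|_2\le\sqrt{n}\,\|\cdot\|_\infty$, with an $\ell_\infty$ norm of order $n$, gives $n^{3/2}/\lambda_\textup{MMD}$, not $n/\lambda_\textup{MMD}$.
\end{itemize}
The number in your ``Assembly'' step is correct only because it coincides with the direct computation you set aside.

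Separately, the intermediate-dataset trick does not handle a label flip. The step that flips $a_n$ still moves the group-sum constraints, so $\w'$ need not be feasible for the problem defining $\w$. You therefore cannot test the first variational inequality at $\w'$. This case needs its own constraint-perturbation argument. Theorem~\ref{thm:stability_weights_gen1} presupposes a fixed $\Xi$, so invoking it does not settle this case either.
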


The stability bound for MMD highlights the crucial roles of the parameters $R_\textup{MMD}$ and $\lambda_\textup{MMD}$. The additional sup-norm constraint $\|\w\|_{\infty}\le R_\textup{MMD}$ is introduced to improve stability by preventing any single subject from receiving an extremely large weight.

The proposition also shows that the regularization parameter $\lambda_{\textup{MMD}}$ governs the sensitivity of the weights to data perturbation. When $\lambda_\textup{MMD}$ is $O(1)$, the bound grows linearly with the sample size $n$, so a small change in the data can induce a substantial shift in the weights. In contrast, when $\lambda_\textup{MMD}$ is of order $O(n^{2})$, the bound becomes $O(1/n)$.

A useful intuition for this weaker stability is that the MMD objective is built from kernel Gram matrices, so a single-record perturbation modifies an entire row and column of the coupled quadratic program rather than only a localized term. As a result, its effect propagates more globally through the optimization problem, making the solution more sensitive to perturbations unless sufficiently strong regularization is imposed.

This highlights an important trade-off: choosing a larger $\lambda_\textup{MMD}$ enhances stability but may compromise covariate balance, whereas a smaller $\lambda_\textup{MMD}$ improves balance at the cost of stability. In practice, the choice of $\lambda_\textup{MMD}$ should therefore balance these competing objectives in light of the data structure and the target application.

\subsubsection{Stability of EBW}

We first review the construction of EBW for ATE setting \cite{hainmueller2012entropy,zhao2017entropy,chen2023entropy}. 

Suppose we have a population $\mathcal{S}$ of $n$ patients consisting of $n_{0}$ patients in the control group $\mathcal{S}_{0}$ and $n_{1}$ patients in the treatment group $\mathcal{S}_{1}$. 

The goal is to reweight every patient so that the first $K$ moments in the reweighted control (resp., treatment) group match up with the corresponding moments for the entire population $\mathcal{S}$. Let $\w_{a}=(w_{a1},\dots,w_{ an_{a}})\in n_{a}\Delta^{n_{a}}$ for $a=0,1$ denote the weight vectors for the control and the treatment groups with normalization condition $\sum_{i\in \mathcal{S}_{a}} w_{ai} = n_{a}$ for $a=0,1$. Fix a base probability distribution for the entire group $\mathbf{q}=(q_{1},\dots,q_{n})$, which is often taken as the uniform distribution (i.e., $q_{i}\equiv 1/n$). We seek for weights $\w=(\w_{0},\w_{1})\in n_{0} \Delta^{n_{0}}\times n_{1}\Delta^{n_{1}}$ that is as close as possible to the base distribution $\mathbf{q}$ subject to the moment constraints with functionals $g_{0},\dots,g_{K}:\R^{p}\rightarrow \R$. This problem can be expressed as the following relative entropy minimization problem
\begin{align}\label{eq:entropy_weight_primal}
	\min_{\w \in n\Delta^{n}} \, D_{KL}(\w \Vert \mathbf{q}) \quad \textup{s.t.} \quad 
	&\frac{1}{n_{0}}\sum_{i\in \mathcal{S}_{0}}  w_{i} g_{r}(\x_{i}) = \frac{1}{n}\sum_{i\in \mathcal{S}}  g_{r}(\x_{i}), \\  
	& \frac{1}{n_{1}}\sum_{i\in \mathcal{S}_{1}}  w_{i} g_{r}(\x_{i}) = \frac{1}{n}\sum_{i\in \mathcal{S}}  g_{r}(\x_{i}), \quad \textup{for $r=0,\dots,K$}. 
\end{align}
Here $D_{KL}(\w \Vert \mathbf{q})=\sum_{i\in \mathcal{S}} (w_{i}/n) \log (w_{i}/nq_{i})$ denotes the KL divergence of $\w/n$ from $\mathbf{q}$, which is used as a measure of `distance' between $\w/n$ and $\mathbf{q}$. (Recall that $D_{KL}(\w \Vert \mathbf{q})=0$ if $\w/n=\mathbf{q}$ and equals $\infty$ if $\w/n$ is not absolutely continuous w.r.t. $\mathbf{q}$). In order to force group-wise normalization condition $\sum_{i\in \mathcal{S}_{a}} w_{i} = n_{a}$ for $a=0,1$, we assume $g_{0}(\cdot)\equiv 1$, the zeroth moment function. 
Once we find an optimal weight vector $\hat{\w}\in n\Delta^{n}$, this gives the weights for our 2ERM formulation in \eqref{eq:weighted_convex_opt}.
We assume throughout that the positivity condition holds. However, positivity alone does not guarantee the feasibility of the exact moment-matching constraints in \eqref{eq:entropy_weight_primal}: if the number of balance functions $K$ is large relative to the within-group sample sizes $n_0$ and $n_1$, the constraints may become overdetermined even under adequate overlap. As discussed below, the dual formulation of \eqref{eq:entropy_weight_primal} admits a natural regularized relaxation that replaces exact matching with approximate balance, thereby yielding a more stable and practically feasible formulation. In practice, infeasibility may also be alleviated by reducing the number or complexity of the balance functions \cite{wang2020minimal}.

Following the approach in Hainmueller \cite[Section 3.2]{hainmueller2012entropy}, we can find the optimal weights $\w^{*}$ in the relative entropy minimization problem \eqref{eq:entropy_weight_primal} by solving its dual formulation. 

Denote $\mathbf{g}=(g_{0},\dots,g_{K}):\R^{p}\rightarrow \R^{K+1}$, which is a function that takes a $p$-dimensional covariate vector $\x_{i}$ into a $(K+1)$-dimensional summary vector $(1,g_{1}(\x_{i}),\dots,g_{K}(\x_{i}))$. Also denote 
\begin{align}\label{eq:def_g_bar_entropy}
	\overline{\mathbf{g}} := \frac{1}{n}\sum_{i=1}^{n}  \mathbf{g}(\x_{i}), 
\end{align}
as the mean moment vector for the entire population $\mathcal{S}$. 
By using first principles, we will show that the dual problem for \eqref{eq:entropy_weight_primal} is
\begin{align}\label{eq:entropy_dual}
	\max_{\blambda=(\blambda_{0},\blambda_{1}),\, \|\blambda \|_{\infty} \le R_\textup{EBW}} \left[ \mathcal{E}^{d}(\blambda_{0},\blambda_{1}) := \left\langle \blambda_{0}+\blambda_{1} ,\, \overline{\mathbf{g}} \right\rangle  - \log C(\blambda) - \frac{\lambda_{\textup{EBW}}}{2} \| \blambda \|_{2}^{2}   \right], 
\end{align}
where 
\begin{align}\label{eq:def_C_lambda}
	C(\blambda) 
	&=:  \sum_{i=1}^{n} q_{i} \exp\left(  \frac{n}{n_{a_{i}}}\left\langle \blambda_{a_{i}}, \mathbf{g}(\x_{i})  \right\rangle \right).
\end{align}
provided $R_\textup{EBW}>0$ is sufficiently large and the $L_{2}$-regularization coefficient $\lambda_{\textup{EBW}}$ is zero. Note that adding $\lambda_{\textup{EBW}} > 0$ effectively relaxes the strict moment-matching constraints in \eqref{eq:entropy_weight_primal} to approximate matching, aligning with the framework of minimal weights \cite{wang2020minimal}. This enhances numerical stability and is essential for the privacy stability bounds derived in Theorem \ref{thm:entropy_weight_stability}. Also note that \eqref{eq:entropy_dual} is a strictly concave maximization problem, which can be solved by a variety of convex optimization algorithms. Once we find the optimal dual variable $\blambda^{*}$ for \eqref{eq:entropy_dual}, we can compute the optimal weights $\w^{*}$ as follows. 

\begin{prop}\label{prop:entropy_weight_duality}
	Let $\blambda^{*}=(\blambda_{0}^{*},\blambda_{1}^{*})$ be the optimal solution for \eqref{eq:entropy_dual}. Then the solution $\w^{*}=(w_{1}^{*},\dots,w_{n}^{*})$ to \eqref{eq:entropy_weight_primal} is given by 
	\begin{align}\label{eq:entropy_u_lambda}
		w_{i}^{*} = 
		\frac{n}{C} q_{i} \exp \left(\frac{n}{n_{a_{i}}}\left\langle \blambda_{a_{i}}^{*}, \mathbf{g}(\x_{i})  \right\rangle \right), 
	\end{align}
	where $C=C(\blambda^{*})$ is the sum-to-$n$ normalization constant and it equals to \eqref{eq:def_C_lambda} with $\blambda=\blambda^{*}$. 
\end{prop}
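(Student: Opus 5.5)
The plan is to derive \eqref{eq:entropy_u_lambda} from the Lagrangian of the primal problem \eqref{eq:entropy_weight_primal}. We work in the regime where the dual \eqref{eq:entropy_dual} is exact, namely $\lambda_{\textup{EBW}}=0$ and $R_{\textup{EBW}}$ large enough that the box constraint is inactive at $\blambda^{*}$. The constraints are affine in $\w$, and the objective $D_{KL}(\w\Vert\mathbf{q})$ is strictly convex on the simplex $n\Delta^{n}$.

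\textbf{Step 1 (Lagrangian).} Attach a multiplier $\blambda_{a}\in\R^{K+1}$ to the moment constraints of group $a\in\{0,1\}$, and a scalar $\nu$ to the normalization $\sum_i w_i=n$. This gives
\[
\mathcal{L}(\w,\blambda,\nu)=\sum_{i}\tfrac{w_i}{n}\log\tfrac{w_i}{nq_i}-\sum_{a\in\{0,1\}}\Big\langle \blambda_a,\ \tfrac{1}{n_a}\sum_{i\in\mathcal{S}_a} w_i\mathbf{g}(\x_i)-\overline{\mathbf{g}}\Big\rangle+\nu\Big(\sum_i w_i-n\Big).
\]
Since $g_0\equiv 1$, the zeroth-moment constraints already force $\sum_{i\in\mathcal{S}_a}w_i=n_a$. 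Hence the normalization multiplier is redundant, and we may fold it into the zeroth coordinates of $\blambda_0$ and $\blambda_1$.

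\textbf{Step 2 (stationarity in $\w$).} For fixed $(\blambda,\nu)$, the function $\mathcal{L}$ is separable and strictly convex in each $w_i$. Setting $\partial\mathcal{L}/\partial w_i=0$ gives
\[
\tfrac1n\Big(\log\tfrac{w_i}{nq_i}+1\Big)=\tfrac{1}{n_{a_i}}\langle\blambda_{a_i},\mathbf{g}(\x_i)\rangle-\nu .
\]
Hence $w_i\propto q_i\exp\big(\tfrac{n}{n_{a_i}}\langle\blambda_{a_i},\mathbf{g}(\x_i)\rangle\big)$. Imposing $\sum_i w_i=n$ fixes the proportionality constant as $n/C(\blambda)$, with $C(\blambda)$ as in \eqref{eq:def_C_lambda}. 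This is exactly the form \eqref{eq:entropy_u_lambda}.

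\textbf{Step 3 (dual function).} Substituting the minimizer $\w(\blambda)$ back into $\mathcal{L}$, the entropy term and the moment term combine. Using $\log\frac{w_i}{nq_i}=\frac{n}{n_{a_i}}\langle\blambda_{a_i},\mathbf{g}(\x_i)\rangle-\log C(\blambda)$, the dual function reduces to $\langle\blambda_0+\blambda_1,\overline{\mathbf{g}}\rangle-\log C(\blambda)$. This is \eqref{eq:entropy_dual} with $\lambda_{\textup{EBW}}=0$. The point of this step is to confirm that the multipliers are scaled so that the dual objective and the exponent in \eqref{eq:entropy_u_lambda} match exactly.

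\textbf{Step 4 (strong duality and recovery).} We expect this to be the main obstacle. We invoke strong duality for convex programs with affine constraints, using the refined Slater condition. This requires a feasible $\w$ in the relative interior, i.e.\ with all $w_i>0$, which we take to follow from the assumed positivity and feasibility of the moment constraints.

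Strong duality, together with attainment of the dual optimum at $\blambda^{*}$, yields a saddle point $(\w^{*},\blambda^{*})$. By Step 2, $\w^{*}$ must be the unique minimizer of $\mathcal{L}(\cdot,\blambda^{*})$, which is $\w(\blambda^{*})$. Alternatively, one can verify directly that $\w(\blambda^{*})$ is primal feasible. The first-order optimality condition $\nabla_{\blambda}\big[\langle\blambda_0+\blambda_1,\overline{\mathbf{g}}\rangle-\log C\big]=0$ at an interior maximizer gives $\frac{1}{n_a}\sum_{i\in\mathcal{S}_a}w_i(\blambda^{*})\mathbf{g}(\x_i)=\overline{\mathbf{g}}$, since $\nabla\log C$ is the $\w(\blambda^{*})$-weighted group moment vector. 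Complementary slackness is trivial because all constraints are equalities. With feasibility in hand, optimality follows from the KKT sufficiency of convex programs, and uniqueness from strict convexity of the KL divergence.

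The delicate points are the ones we would handle by this direct KKT verification rather than by an abstract duality theorem: that the dual optimum is attained in the interior of the box $\|\blambda\|_\infty\le R_{\textup{EBW}}$, and the boundary behavior of $w\log w$ at $w=0$.
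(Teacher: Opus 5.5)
Your proposal is correct and follows the Lagrangian route the paper indicates (after Hainmueller). Stationarity in $\w$ gives the exponentially tilted weights normalized by $C(\blambda)$, substitution recovers the dual objective, and the dual first-order condition $\nabla_{\blambda_a}\log C(\blambda^{*})=\overline{\mathbf{g}}$ for $a=0,1$ is exactly primal feasibility of $\w(\blambda^{*})$, all in the regime $\lambda_{\textup{EBW}}=0$ with the box constraint inactive, where the dual is exact. Your direct KKT verification is self-contained, so the Slater step can be dropped; its justification is loose in any case, since feasibility does not by itself supply a strictly positive feasible $\w$. The direct verification also shows that every dual maximizer yields the same $\w^{*}$, which matters because with $\lambda_{\textup{EBW}}=0$ the dual has a flat direction (a joint shift of the zeroth coordinates, since $g_{0}\equiv 1$), so $\blambda^{*}$ is not unique.
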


Now we state our stability result for EBW. Since the primal problem \eqref{eq:entropy_weight_primal} is scale-invariant under the transformation $g(\x_{i})\mapsto a g(\x_{i})$ for any scalar $a>0$, we may assume without loss of generality that $\frac{n}{n_{0}\land n_{1}}\| g(\x_{i}) \|_{2}\le 1$ for all $i$.

\begin{theorem}[Stability of EBW]\label{thm:entropy_weight_stability}
	Let $\w$ and $\w'$ denote the EBW weights for datasets $\mathcal{D}$ and $\mathcal{D}'$ that differ at one point, as above, with dual variables restricted to the $L_2$-ball $\|\blambda\|_{2}\le R_\textup{EBW}$, under the observational study.  
	Let $\B \in \R^{n\times 2(K+1)}$ denote the extended covariate matrix for $\mathcal{D}$ where each row $\b_{i}$ is defined as
	\begin{align}\label{eq:def_entropy_b}
		\b_{i}:=
		\frac{n}{n_{0}}
		\begin{pmatrix}
			\mathbf{g}(\x_{i}) \\
			\mathbf{0}
		\end{pmatrix}
		\quad \textup{for $a_{i}=0$}, \qquad  
		\b_{i}:=
		\frac{n}{n_{1}}
		\begin{pmatrix}
			\mathbf{0} \\
			\mathbf{g}(\x_{i})
		\end{pmatrix}
		\quad \textup{for $a_{i}=1$}.
	\end{align}
	Suppose without loss of generality that $\|\b_{i}\|_{2}\le 1$ for all $i$.  Let $\hat{\Sigma}\in \R^{2(K+1) \times 2(K+1)}$ denote the empirical covariance matrix of $\B$. Define the base-measure ratio $r_{q}:=\frac{\max_{i} q_{i}}{ \min_{i} q_{i} } \ge 1$ and the strong concavity parameter
	\begin{align}
		\rho := r_{q}^{-1} \exp(-2R_\textup{EBW}) \lambda_{\min}(\hat{\Sigma}) + \lambda_{\textup{EBW}} > 0, \qquad \delta_{E} := \left( R_\textup{EBW} \wedge  \frac{1+\sqrt{2}}{\rho}\right).
	\end{align}
	Then, 
	\begin{align}\label{eq:entropy_sensitivity_bd}
		\|\w - \w'\|_2 &\le \frac{2\,r_q\exp(2R_\textup{EBW})\bigl(\sqrt{2}+r_q\exp(2R_\textup{EBW})\bigr)}{\rho\sqrt{n}} + \frac{2R_\textup{EBW}\,r_q^2\exp(4R_\textup{EBW})}{\rho} \\
		& \qquad + 2\sqrt{2} \, r_q \delta_{E}\, \exp(2\delta_{E}). \nonumber
	\end{align}
\end{theorem}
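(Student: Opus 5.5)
The bound in \eqref{eq:entropy_sensitivity_bd} splits naturally into three terms, and we organize the proof around that split. Write $\blambda^{*}=\blambda^{*}(\mathcal{D})$ and $\blambda'^{*}=\blambda^{*}(\mathcal{D}')$ for the dual maximizers of \eqref{eq:entropy_dual}. By Proposition~\ref{prop:entropy_weight_duality}, the weights are a softmax map $\w=F_{\mathcal{D}}(\blambda^{*})$ with $w_i=n q_i e^{\langle \blambda,\b_i\rangle}/\sum_j q_j e^{\langle\blambda,\b_j\rangle}$. The triangle inequality then gives
\[
\|\w-\w'\|_2\le \|F_{\mathcal{D}}(\blambda^{*})-F_{\mathcal{D}}(\blambda'^{*})\|_2+\|F_{\mathcal{D}}(\blambda'^{*})-F_{\mathcal{D}'}(\blambda'^{*})\|_2 .
\]
The first term is a change of parameter at fixed data. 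The second is a change of data at a fixed parameter, and it affects only row $n$ of $\B$ together with the normalizer $C$. Note that changing $a_n$ also changes $n_0,n_1$ and hence all the rows $\b_i$. We treat that effect as part of the data change and absorb it into the factor $\sqrt{2}$ and the $1/\sqrt{n}$ terms.

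\textbf{Step 1 (dual stability).} We apply Theorem~\ref{thm:stability_weights_gen1} to the convex function $-\mathcal{E}^{d}(\cdot;\mathcal{D})$ on the ball. The Hessian of $\log C(\blambda)$ is the covariance of $\b_i$ under the tilted distribution $p_i\propto q_ie^{\langle\blambda,\b_i\rangle}$. Since $|\langle\blambda,\b_i\rangle|\le R_{\textup{EBW}}$, each $p_i\ge r_q^{-1}e^{-2R_{\textup{EBW}}}/n$. A tilted covariance dominates a multiple of the uniform covariance, so the strong convexity is $\rho=r_q^{-1}e^{-2R_{\textup{EBW}}}\lambda_{\min}(\hat\Sigma)+\lambda_{\textup{EBW}}$. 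This comparison is the step I expect to be the main obstacle. I would first try the identity $\mathrm{Cov}_p=\frac12\sum_{i,j}p_ip_j(\b_i-\b_j)(\b_i-\b_j)^\top$ together with $p_ip_j\ge (\min p)^2$, and then fall back to $\min_i(np_i)$ times the uniform version to get exactly one factor.

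For the gradient difference, note that $\nabla \mathcal{E}^d=(\overline{\mathbf g},\overline{\mathbf g})-\sum_i p_i\b_i-\lambda_{\textup{EBW}}\blambda$. Changing one record moves $\overline{\mathbf g}$ by $O(1/n)$ and moves the tilted mean by $O(r_qe^{2R_{\textup{EBW}}}/n)$, so the gradient moves by $O(1/n)$, uniformly in $\blambda$. The simpler a-priori bound $\|\blambda^{*}-\blambda'^{*}\|_2\le 2R_{\textup{EBW}}$ also holds. Finally, a direct first-order-condition comparison, $\rho\|\Delta\blambda\|\le\|\nabla\text{ difference}\|\le(1+\sqrt2)$ in unnormalized units, yields $\delta_E=R_{\textup{EBW}}\wedge (1+\sqrt2)/\rho$.

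\textbf{Step 2 (Lipschitz bound on the softmax).} The Jacobian of $F_{\mathcal{D}}$ is $n(\mathrm{diag}(p)-pp^\top)\B$. Its operator norm is at most $n\max_i p_i\cdot\|\B\|_2$, and since $\|\B\|_2\le\sqrt n$ this is at most $r_qe^{2R_{\textup{EBW}}}\sqrt n$. Multiplying by the $O(1/(\rho n))$ parameter shift from Step 1 produces the first term of \eqref{eq:entropy_sensitivity_bd}, of order $r_qe^{2R_{\textup{EBW}}}(\sqrt2+r_qe^{2R_{\textup{EBW}}})/(\rho\sqrt n)$. The second term, with $R_{\textup{EBW}}r_q^2e^{4R_{\textup{EBW}}}/\rho$, accounts for the part of the gradient shift caused by the group-size rescaling $n/n_a$ when the treatment label changes. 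Here the gradient moves by $O(R_{\textup{EBW}}/n\cdot\ldots)$ but is multiplied by the full Lipschitz constant.

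\textbf{Step 3 (data change at a fixed parameter).} At the fixed parameter $\blambda'^{*}$, only coordinate $n$ and the normalizer $C$ change. The normalizer changes by a relative factor $1+O(r_qe^{2\delta_E}/n)$. We bound the single changed weight and the $\ell_2$ aggregate of the rescaled remaining weights each by $\sqrt2\,r_q\delta_Ee^{2\delta_E}$, using $|e^{x}-e^{y}|\le|x-y|e^{\max(x,y)}$ with exponents controlled by $\delta_E$. Summing the contributions from Steps 1--3 gives \eqref{eq:entropy_sensitivity_bd}.
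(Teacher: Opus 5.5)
Your split of $\w-\w'$ into a dual-shift piece and a data-shift piece at a frozen dual parameter is the right architecture. The dual-shift piece is the softmax Lipschitz constant $n\max_i p_i\,\|\B\|_2\le r_q e^{2R_{\textup{EBW}}}\sqrt{n}$ times a dual shift of order $1/(\rho n)$. The gap is in Step 3, which rests on a misidentification of $\delta_E$. You obtain $\delta_E$ as a bound on the shift $\|\blambda^*-\blambda'^*\|_2$ (and your own a priori argument puts $2R_{\textup{EBW}}$, not $R_{\textup{EBW}}$, in the minimum). Even granting that bound, it is the wrong quantity for Step 3, where the parameter is frozen and only the data move. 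There, the exponent gap you feed into $|e^x-e^y|\le|x-y|e^{\max(x,y)}$ is $\langle\blambda'^*,\b_n-\b_n'\rangle$, and the size of the weights is governed by $\max_i|\langle\blambda'^*,\b_i\rangle|$. Both are controlled by the \emph{norm} of the frozen parameter, not by how far it moved between datasets. With only the constraint $\|\blambda\|_2\le R_{\textup{EBW}}$ you get a third term of order $r_q R_{\textup{EBW}}e^{2R_{\textup{EBW}}}$. The stated bound does not control this once $\rho$ is large, so that $\delta_E\ll R_{\textup{EBW}}$.

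The missing ingredient is an a priori bound on the dual optimizer itself. Since $\mathbf{0}$ is feasible, first-order optimality of $\blambda^*$ and $\rho$-strong monotonicity of $\nabla(-\mathcal{E}^d)$ give $\rho\|\blambda^*\|_2^2\le-\langle\nabla(-\mathcal{E}^d)(\mathbf{0}),\blambda^*\rangle$. The gradient $\nabla(-\mathcal{E}^d)(\mathbf{0})=\sum_i q_i\b_i-(\overline{\mathbf{g}},\overline{\mathbf{g}})$ has norm at most $1+\sqrt{2}$, hence $\|\blambda^*\|_2\le\delta_E$. This is also why the third term vanishes as $\delta_E\to 0$: at $\blambda=\mathbf{0}$ both datasets give the data-free weights $n\mathbf{q}$. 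Now put $x_n=\langle\blambda^*,\b_n\rangle$, $x_n'=\langle\blambda^*,\b_n'\rangle$ and $s=\sum_{i<n}q_ie^{\langle\blambda^*,\b_i\rangle}$, so all exponents lie in $[-\delta_E,\delta_E]$. Each weight is at most $nq_ie^{2\delta_E}\le r_qe^{2\delta_E}$. The map $x\mapsto nq_ne^{x}/(s+q_ne^{x})$ has derivative at most its own value, and $|x_n-x_n'|\le 2\delta_E$, so $|w_n-w_n'|\le 2r_q\delta_Ee^{2\delta_E}$. The other coordinates are all rescaled by one common factor, so their $\ell_2$ change is at most their $\ell_1$ change, which equals $|w_n-w_n'|$. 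Together this gives exactly $2\sqrt{2}\,r_q\delta_Ee^{2\delta_E}$. Because $\rho$ is built from $\hat{\Sigma}$ of $\mathcal{D}$, freeze $\blambda^*=\blambda^*(\mathcal{D})$ rather than $\blambda'^*$, i.e., pass through $F_{\mathcal{D}'}(\blambda^*)$. The Lipschitz bound for $F_{\mathcal{D}'}$ is the same, and the shift bound needs only strong convexity of the $\mathcal{D}$-objective.

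Two smaller points. First, the covariance comparison you flag as the main obstacle is one line: $\v^\top\mathrm{Cov}_p\,\v=\min_c\sum_i p_i(\langle\v,\b_i\rangle-c)^2\ge\min_i(np_i)\cdot\min_c\frac{1}{n}\sum_i(\langle\v,\b_i\rangle-c)^2$, with $\min_i np_i\ge r_q^{-1}e^{-2R_{\textup{EBW}}}$. Second, your claim that only coordinate $n$ and $C$ change is false when $a_n$ flips, because then every row of $\B$ is rescaled through $n/n_a$. Saying this is absorbed into the $\sqrt{2}$ and $1/\sqrt{n}$ terms is not an argument. Either restrict to same-label neighbors or carry the rescaling explicitly through both pieces.
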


The three terms in~\eqref{eq:entropy_sensitivity_bd} reflect distinct
mechanisms. The first term, of order $O(1/\sqrt{n})$, captures the
parameter shift $\|\hat{\blambda}-\hat{\blambda}'\|_2$ propagated
through the Lipschitz constant of the weighted softmax. Its
$1/\sqrt{n}$ decay---absent in the analogous IPW bound
(Proposition~\ref{prop:IPW_estimated_stability}), whose leading term is
$O(1)$---arises because the softmax is $O(1/n)$-Lipschitz rather than
$O(1/\sqrt{n})$-Lipschitz, and is the key structural advantage of EBW
over IPW. The remaining two terms govern the data-shift contribution
and are controlled by $1/\rho$, so that stronger regularization
$\lambda_{\textup{EBW}}$ renders them negligible at the cost of
relaxing exact moment-matching. Crucially, the EBW regularization acts
through the dual parameter $\blambda$ as a nuisance quantity, whereas
IPW's regularization $\lambda_{\textup{IPW}}$ directly perturbs the
released propensity model; consequently, EBW can leverage large
$\lambda_{\textup{EBW}}$ to reduce sensitivity without the
misspecification cost that a large $\lambda_{\textup{IPW}}$ would
incur. Finally, the dependence on the dual radius through the tighter
quantity $\delta_{E}=R_\textup{EBW}\wedge(1+\sqrt{2})/\rho$ ensures that both terms
also vanish in the small-$R_\textup{EBW}$ regime, even when $\rho$ is moderate.

We verify in Appendix that the $R=O(1)$ regime of Corollary~\ref{cor:DP_objective_perturbation} holds by construction or under mild overlap for each of the three weighting methods.

\section{Learning ITRs via Weighted Regression and DP-2ERM}\label{sec:ITR}

As discussed in Section~\ref{sec:running_example_ITR}, learning ITRs from observational data is a canonical instance of 2ERM: covariate balancing weights are computed in a first stage to mitigate confounding, and then treated as nuisance inputs in a second-stage weighted ERM. In this section, we formalize the weighted-regression reduction for ITR learning and instantiate Algorithm~\ref{algorithm:main} for private estimation of a linear decision function.

\subsection{Setup and Optimal Decision Function}
We observe i.i.d.\ data $(\X,A,Y)$ with $A\in\{0,1\}$. We learn a decision function $f$ and use
$d(\x)=\mathbf{1}\{f(\x)>0\}$. Throughout, the treatment is encoded as $A\in\{0,1\}$; the expression $(2A-1)\in\{-1,+1\}$ that appears inside the weighted-regression loss below is an algebraic sign transform of this $\{0,1\}$-encoded $A$, not a re-encoding of the treatment. Under the causal assumptions in Section~\ref{sec:running_example_ITR}, the value
$V(d):=\E\{Y(d(\X))\}$ is identified by
\begin{align}
	V(d)=\E\!\left[\frac{Y\,\mathbf{1}\{A=d(\X)\}}{\pi(A,\X)}\right].    
\end{align}
The optimal ITR is determined by the conditional mean contrast
\begin{align}
	f_{\textup{opt}}(\x):=\E(Y\mid \X=\x,A=1)-\E(Y\mid \X=\x,A=0),
	\,\,
	d_{\textup{opt}}(\x)=\mathbf{1}\{f_{\textup{opt}}(\x)>0\}.
\end{align}

\subsection{Weighted Regression Reduction, 2ERM Structure, and Private Estimation}\label{sec:ITR_weighted_regression}
A key observation underlying the \emph{weighted regression} framework for ITR learning
\cite{tian2014simple,chen2017general,qi2018d} is that the optimal decision function
$f_{\textup{opt}}$ is characterized as the minimizer of a weighted square-loss risk,
\begin{align*}
	f_{\textup{opt}} \in \argmin_{f}\,\, \E\!\left[\frac{(2Y(2A-1)-f(\X))^{2}}{\pi(A,\X)}\right].
\end{align*}

\noindent\textit{Intuition.}
Weighting corrects for the observational treatment mechanism by reweighting the data to a
pseudo-population in which the treated and control covariate distributions are balanced; in a
randomized trial this is automatic since $\pi(a,\x)\equiv 1/2$. In this balanced population,
regressing the transformed outcome $2Y(2A-1)$ on $\X$ targets the conditional mean contrast without
requiring a parametric model for $\E(Y\mid A,\X)$.

Formally, fix $\x$ and write $g=f(\x)$. Then
\begin{align}
	\E\!\left[\left.\frac{(2Y(2A-1)-g)^2}{\pi(A,\X)}\right|\,\X=\x\right]
	=\sum_{a\in\{0,1\}}\E\!\left[(2(2a-1)Y-g)^2 \mid \X=\x, A=a\right],
\end{align}
where $1/\pi(A,\X)$ cancels $\P(A=a\mid \X=\x)$. Differentiating with respect to $g$ yields the unique
minimizer $g=f_{\textup{opt}}(\x)$. Consequently, $f_{\textup{opt}}$ can be estimated by a weighted
least-squares regression of $2Y(2A-1)$ on covariates (or interaction features), and the induced ITR is
$d(\x)=\mathbf{1}\{f(\x)>0\}$.

We adopt the linear class $f_{\param}(\x)=\x^\top\param$ for interpretability and because it yields a convex
weighted ERM amenable to private optimization. Given $\mathcal{D}=\{(\x_i,a_i,y_i)\}_{i=1}^n$, we estimate
$\param$ by the weighted ERM
\begin{align}\label{eq:weighted_convex_opt_itr}
	\widehat{\param}\in\argmin_{\param\in \Param}
	\left\{
	\frac{1}{n}\sum_{i=1}^n w_i\, \big(2y_i (2a_i-1)-\x_i^\top\param\big)^2 + R(\param)
	\right\},
	\qquad
	\Param:=\{\param\in\R^p:\|\param\|_1\le \lambda_1\},
\end{align}
where $R(\param)$ is a convex regularizer.

When $\pi$ is known, $w_i=1/\pi(a_i,\x_i)$ recovers the usual inverse-propensity weighted criterion
\cite{tian2014simple,chen2017general}. More generally, we allow data-dependent weights
$\w(\mathcal{D})=(w_1,\dots,w_n)$, and in particular the covariate balancing weights from
Section~\ref{sec:stability_CBW}, which enforces covariate balance without explicitly modeling $\pi$.

Equation~\eqref{eq:weighted_convex_opt_itr} is an instance of the 2ERM 
template~\eqref{eq:weighted_convex_opt}:
the first stage computes weights $\w(\mathcal{D})$ by 
solving~\eqref{eq:find_optimal_weight}, and the second stage solves
\eqref{eq:weighted_convex_opt_itr} conditional on $\w(\mathcal{D})$. Consequently, the 
privacy noise level and the additional $L_2$-regularization in 
Algorithm~\ref{algorithm:main} are calibrated through the weight
stability $\|\w(\mathcal{D})-\w(\mathcal{D}')\|$ via 
Theorem~\ref{thm:sensitivity_objective_perturbation} (see also 
Section~\ref{sec:running_example_ITR}); tighter stability bounds
permit smaller objective perturbations and improved utility. Following 
Lee et al.~\cite{lee2024effective}, we
solve~\eqref{eq:weighted_convex_opt_itr} using \textit{projected gradient descent} (PGD).

In the following corollary, we invoke 
Theorem~\ref{thm:sensitivity_objective_perturbation} to obtain a differentially 
private estimator of $\widehat{\param}$ via objective perturbation.

\begin{corollary}[Data Sensitivity Bound for DP-ITR via Objective Perturbation]
	\label{cor:sensitivity_objective_perturbation_ITR}
	Suppose $\mathcal{D}$ and $\mathcal{D}'$ are two size-$n$ datasets that differ in exactly one 
	observation, with corresponding (non-private) optimal weights 
	$\w(\mathcal{D})=(w_{1},\dots,w_{n})$ and $\w(\mathcal{D}')=(w_{1}',\dots,w_{n}')$ solving 
	\eqref{eq:find_optimal_weight}. Suppose the parameter space is 
	$\Param=\{\param:\|\param\|_{1}\le \lambda_{1}\}$ for some $\lambda_{1}>0$. Let 
	$\param^{\textup{priv}}$ be the output of Algorithm~\ref{algorithm:main} applied to 
	\eqref{eq:weighted_convex_opt_itr}. If $\|\x\|_2\le M$ and $|y|\le M'$ for all 
	$(\x,a,y)\in\mathcal{D}\cup\mathcal{D}'$, then \eqref{eq:thmA1_gamma} and 
	\eqref{eq:thmA1_gaussian} in Theorem~\ref{thm:sensitivity_objective_perturbation} hold 
	with $\zeta= 2M^{2}\lambda_{1} + 4MM'$ and $\lambda=2M^{2}$.
\end{corollary}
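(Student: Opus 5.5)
The plan is to verify the two hypotheses of Theorem~\ref{thm:sensitivity_objective_perturbation} for the per-sample loss $\ell(\param;\d)=\bigl(2y(2a-1)-\x^\top\param\bigr)^{2}$ with $\d=(\x,a,y)$. Once uniform bounds $\|\nabla\ell(\param;\d)\|_2\le\zeta$ and $\textup{tr}(\nabla^2\ell(\param;\d))\le\lambda$ hold over $\param\in\Param$ and all $\d\in\mathcal{D}\cup\mathcal{D}'$, the conclusions \eqref{eq:thmA1_gamma} and \eqref{eq:thmA1_gaussian} follow verbatim from that theorem. No new probabilistic argument is needed.

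\emph{Step 1 (gradient bound).} Write $z:=2y(2a-1)$, so that $|z|\le 2M'$ since $|2a-1|=1$. Then $\nabla\ell(\param;\d)=-2\bigl(z-\x^\top\param\bigr)\x$, and hence $\|\nabla\ell\|_2\le 2\|\x\|_2\bigl(|z|+|\x^\top\param|\bigr)$. To control $|\x^\top\param|$ over the $\ell_1$-ball $\Param$, I use H\"older: $|\x^\top\param|\le\|\x\|_\infty\|\param\|_1\le\|\x\|_2\lambda_1\le M\lambda_1$. Substituting gives $\|\nabla\ell\|_2\le 2M(2M'+M\lambda_1)=2M^2\lambda_1+4MM'=\zeta$.

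\emph{Step 2 (Hessian trace).} The Hessian is $\nabla^2\ell(\param;\d)=2\x\x^\top$. Its trace is $2\|\x\|_2^2\le 2M^2=\lambda$, and this holds uniformly in $\param$. The Hessian is also continuous, indeed constant, as Algorithm~\ref{algorithm:main} requires. The loss is convex, and $\Param$ is closed and convex.

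\emph{Step 3 (invoke the theorem).} I apply Theorem~\ref{thm:sensitivity_objective_perturbation} with these $\zeta,\lambda$ and with the weights $\w,\w'$ from \eqref{eq:find_optimal_weight}. The regularizer $R$ enters only as a convex term, which the theorem already permits.

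I do not expect a real obstacle. The one point needing care is Step 1: the bound on $|\x^\top\param|$ must hold uniformly over all of $\Param$ and not just near the optimum, and it should use the $\ell_1$ constraint through $\|\x\|_\infty\le\|\x\|_2$. The other thing to check is that the sign transform $2a-1$ leaves $|z|$ unchanged, so the outcome bound $M'$ enters only through the factor $2$.
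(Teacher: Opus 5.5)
Your proposal is correct and follows the direct verification this corollary calls for: bound the gradient by $2M(2M'+M\lambda_1)$ using $|\x^\top\param|\le M\lambda_1$ on the $\ell_1$-ball, bound the Hessian trace by $\textup{tr}(2\x\x^\top)=2\|\x\|_2^2\le 2M^2$, and then apply Theorem~\ref{thm:sensitivity_objective_perturbation} verbatim. The paper's appendix proof is not included in the source, but your constants reproduce the stated $\zeta$ and $\lambda$ exactly.
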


\section{Simulation Studies}\label{sec:simulation}
To evaluate the performance of our DP-2ERM approach, we conduct simulation studies under an observational study setting, adapting the data-generating mechanisms in \cite{lee2024effective, shah2022stabilized, maronge2023reluctant}. In each replicate, we generate $n=400$ training observations and evaluate the learned ITR on an independent test set of size $n_{\textup{test}}=10{,}000$. Covariates are $\X=(X_1,\ldots,X_p)$ with $p=10$ and $X_j \stackrel{\textup{iid}}{\sim} \textup{TN}_{[-1,1]}(0,1)$, i.e., a standard normal truncated to $[-1,1]$. We repeat the experiment for 100 replicates.

\noindent{\it Treatment assignment and outcome generation}: Across all scenarios, treatment $A\in\{0,1\}$ follows $\textup{logit}\{\P(A=1\mid \X)\}=0.3X_1-0.5X_2+0.05$, and outcomes are generated from $Y=\mu(\X)+\frac{2A-1}{2}\,f_{\textup{opt}}(\X)+\tilde{\varepsilon}$, where $\mu(\X)$ is the treatment-free effect and $\tilde{\varepsilon}$ is mean-zero noise with conditional variance
$\Var(\tilde{\varepsilon}\mid A,\X)=\sigma^2(A,\X)$.

\noindent{\it Scenario-specific mechanisms}: We consider three settings: one correctly specified linear scenario and two misspecified scenarios (tree and nonlinear). Each scenario specifies $(\mu(\X), f_{\textup{opt}}(\X), \sigma^2(A,\X))$ as follows:

\begin{enumerate}[label=(\roman*), leftmargin=*, itemsep=6pt]
	\item \textbf{Linear optimal ITR (correctly specified):}
	\begin{align*}
		&\mu(\X) = -0.1 \sum_{j=1}^{5}\left\{X_j+\frac{2}{3}(2X_j^2-1)\right\}, \quad  f_{\textup{opt}}(\X) = 8X_1-8X_2+4X_3+8X_4, \\& \sigma^2(A,\X) = 2.
	\end{align*}
	
	\item \textbf{Tree optimal ITR (misspecified):}
	\begin{align*}
		&\mu(\X) = -3 \sum_{j=1}^{5}\left\{X_j+\frac{2}{3}(2X_j^2-1)\right\}, \quad \sigma^2(A,\X) = 2, \\
		&f_{\textup{opt}}(\X) =
		6\,\mathbb{I}(X_1>-0.5)\,\textup{sign}(X_1-0.5)
		\;+\;
		5\,\mathbb{I}(2X_1<-0.5)\,\textup{sign}(X_4+0.5)
		\;+\; 1.
	\end{align*}
	
	\item \textbf{Nonlinear optimal ITR (misspecified; heteroscedastic).}
	\begin{align*}
		\mu(\X) &= 2 + 3X_1 + 2X_2 + 3X_4 - 2.5X_4^2 - 1.5X_5^2
		+ 2X_1X_2 + 2\exp(-X_1X_2) + \sin(X_3),\\
		f_{\textup{opt}}(\X) &= -0.5 - 2X_4 + X_4^2 + 2.5X_5^2,\\
		\sigma^2(A,\X) &= 0.25 + 2X_2\,\mathbb{I}(X_2>0)
		+ X_3\,\mathbb{I}(X_3>0,\,A=1)
		+ X_4\,\mathbb{I}(X_4>0,\,A=0).
	\end{align*}
\end{enumerate}

We compare DP-2ERM against composition-style baselines tailored to each weighting family. The construction differs between IPW and EBW/MMD:
\begin{itemize}[leftmargin=*]
	\item \textbf{\texttt{IPW (Private - composition)}}: a literal stage-wise composition. The first stage privately fits the logistic propensity model with budget $\varepsilon/2$ via the Chaudhuri--Monteleoni--Sarwate mechanism \cite{chaudhuri2011differentially}; the second stage runs the data-independent-weight DP-ERM of Corollary~\ref{cor:DP_guarnatee_weighted_ERM} calibrated using the deterministic bound $\max_i \tilde{w}_i \le 1 + \exp(M R_{\textup{IPW}})$, which holds under $\|\hat{\lambda}\|_2 \le R_{\textup{IPW}}$.
	\item \textbf{\texttt{EBW/MMD (Private - composition)}}: no off-the-shelf DP first-stage mechanism is available, so only the second stage is privatized using the method-specific regime of Corollary~\ref{cor:DP_objective_perturbation}. Each family's first-stage constraint yields $\max_i w_i \le R_{\textup{method}}$, which sharpens the unconditional $\overline{W}_2 = O(n^{3/2})$ to $\overline{W}_2 = O(n\sqrt{R_{\textup{method}}})$ and tightens $\overline{W}_1$ from $4n$ to $\le 2n + 2R_{\textup{method}}$.
\end{itemize}

Hyperparameters $(R_{\textup{method}}, \lambda_{\textup{method}})$ are jointly tuned on an independently simulated validation set disjoint from every training replicate---so tuning consumes no privacy budget on the training data---by maximizing average bootstrap-\textit{out-of-bag} (OOB) accuracy across privacy budgets, following \cite{wang2020minimal,chen2023robust}.

Performance is summarized by \emph{accuracy}, defined as the fraction of test points for which the estimated ITR matches the true optimal treatment, evaluated across a range of privacy budgets $\varepsilon$. For Gaussian objective perturbation we set $\delta = 1/n$ (or $\delta = 1/(2n)$ per stage for \texttt{IPW (Private - composition)}, so the overall Gaussian guarantee is $(\varepsilon, 1/n)$-DP), a standard choice in the differential privacy literature.

\begin{figure}[tbp]
	\centering
	\includegraphics[width=\textwidth]{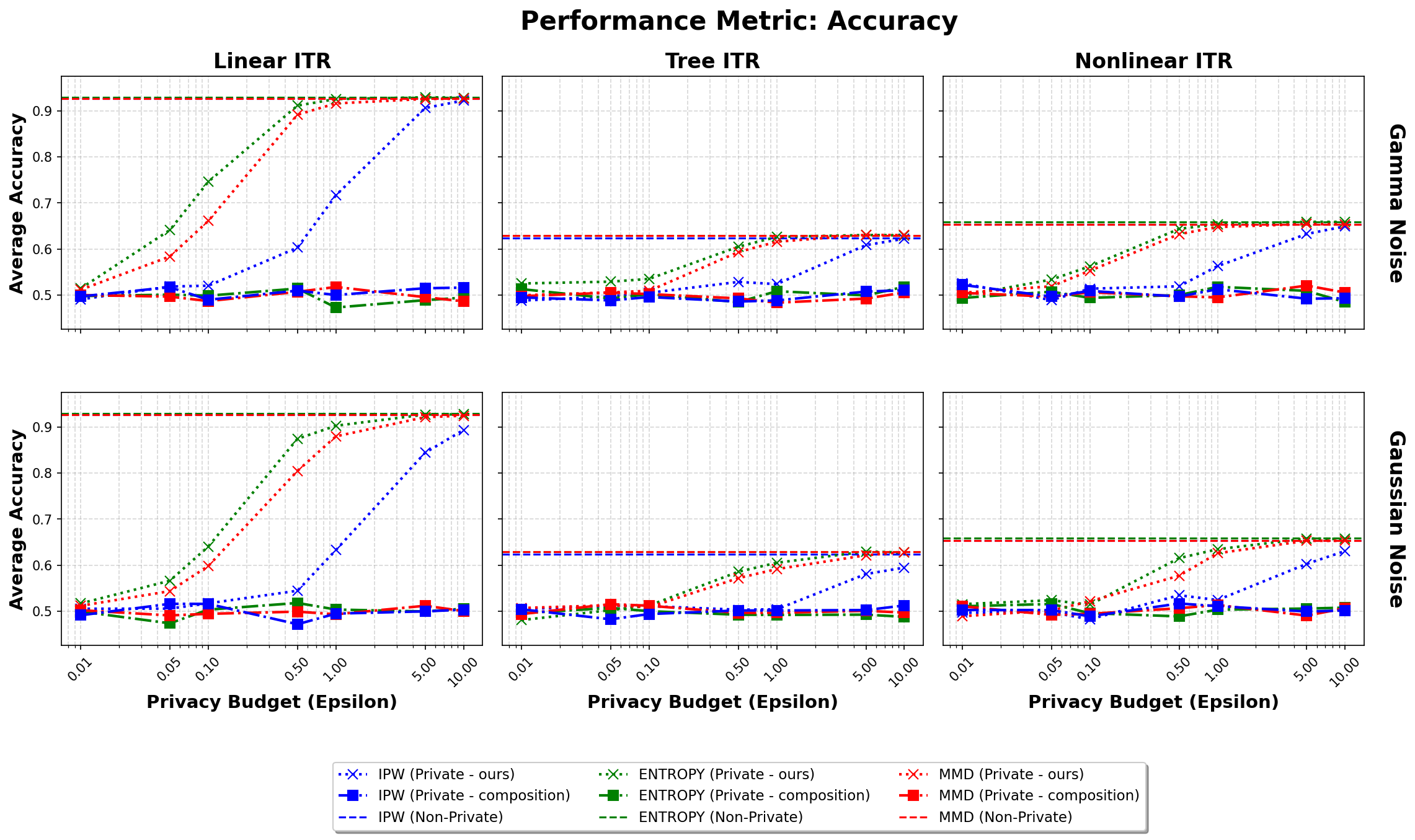}
	\caption{Privacy--utility trade-off across scenarios. Average test-set ITR accuracy 
		(100 replicates) for DP-2ERM with IPW (blue), EBW (green), and MMD (red) under 
		Gamma objective perturbation (top; $\varepsilon$-DP) and Gaussian objective perturbation 
		(bottom; $(\varepsilon,\delta)$-DP). Solid squares denote private composition-style 
		baselines (\texttt{Private-composition}): for IPW, a literal stage-wise composition 
		baseline with a private first-stage logistic propensity fit and a private second-stage 
		weighted ERM; for EBW and MMD, favorable worst-case second-stage surrogates, since 
		no off-the-shelf private first-stage mechanism is available for these weighting 
		programs. Dashed lines denote the non-private linear-ITR baseline within the chosen 
		decision-function class.}
	\label{fig:simulation}
	\vspace{-0.1in}
\end{figure}

Figure~\ref{fig:simulation} exhibits the privacy--utility pattern across the three scenarios we study. As $\varepsilon$ increases, DP-2ERM estimators approach their non-private counterparts; in the high-privacy regime (small $\varepsilon$), stronger privacy protection requires larger perturbations and accuracy degrades. Our theory makes this trade-off explicit by quantifying the amount of noise required at each privacy level for each weighting method.

The most striking empirical pattern is the gap between DP-2ERM (\texttt{Private-ours}) and \emph{all three} composition baselines. Across every scenario, every noise mechanism, and every privacy budget we report, the composition baselines hover near chance accuracy ($\approx 0.50$), while DP-2ERM approaches its non-private benchmark as $\varepsilon$ grows. Two structural sources drive this gap. (i) \textit{Budget splitting.} Literal composition must allocate $\varepsilon$ across the two stages; a $50/50$ split leaves the second stage with $\varepsilon/2$, contributing a factor of $2$ to the injected noise. (ii) \textit{Weight-magnitude versus weight-perturbation sensitivity.} Composition's second-stage sensitivity scales with $\max_i \tilde{w}_i$---the \emph{magnitude} of a single weight---because it must protect against adversarially concentrated weight vectors. DP-2ERM instead uses the weight-perturbation object $\lVert w(\mathcal{D}) - w(\mathcal{D}')\rVert$, whose bound for each weighting method carries a denominator $\rho$ that grows linearly in the first-stage regularizer: for IPW with estimated logistic parameters, $\rho = e^{-MR}(1+e^{-MR})^{-2}\lambda_{\min}(\hat{\Sigma}) + \lambda_{\textup{IPW}}$ (Proposition~\ref{prop:IPW_estimated_stability}), with analogous structure for EBW. Increasing the first-stage regularizer therefore shrinks DP-2ERM's sensitivity linearly across \emph{all} weighting methods, whereas the composition baseline's max-weight bound is insensitive to this regularization. In the Linear scenario these two effects combine to an approximately $117$ times larger second-stage noise scale for \texttt{IPW (Private - composition)} than for DP-2ERM at the same $\varepsilon$. For EBW and MMD, the gap is even larger because the worst-case surrogate pays $\overline{W}_1 \le 2n + 2R_{\textup{method}} = O(n)$, which cannot be sharpened without first-stage stability information.

Comparing noise mechanisms, the Gamma perturbation (top row, $\varepsilon$-DP) consistently yields higher accuracy than the Gaussian perturbation (bottom row, $(\varepsilon,1/n)$-DP) in the high- and moderate-privacy regimes (e.g., in the Linear scenario at $\varepsilon=0.50$, EBW--Gamma attains $0.91$ versus $0.87$ for EBW--Gaussian). As $\varepsilon$ increases the two mechanisms become more similar and both approach the corresponding non-private baselines, with Gamma typically reaching the plateau earlier.

Among the weighting methods, EBW (green) and MMD (red) exhibit comparably fast convergence toward their non-private baselines, while IPW (blue) lags noticeably, particularly in the intermediate-privacy regime. IPW's slower recovery reflects its higher sensitivity to perturbations of the estimated propensity model.

Finally, DP-2ERM remains effective under misspecification of the decision-function class. In the Tree and Nonlinear scenarios, the best achievable performance within the linear-ITR class (dashed lines) is only moderately above chance ($\approx 0.63$ and $0.66$, respectively), reflecting the inherent approximation error of a linear rule. Importantly, DP-2ERM with stable weights converges to these non-private linear baselines as privacy constraints relax, confirming that in these simulated settings the method recovers the best rule in the chosen decision-function class even when the true optimal rule is nonlinear. Additional results on empirical values (i.e., the estimated value function evaluated on the test set) are deferred to Appendix.

\section{Applications}\label{sec:application}

To evaluate the performance of the proposed algorithm DP-2ERM on a standard benchmark built from real covariates, we use the well-known Twins dataset, a common benchmark for evaluating CATE estimators in machine learning \cite{almond2005costs, crabbe2022benchmarking}. This dataset is derived from vital statistics regarding twin births in the United States. To create a challenging benchmark that mimics an observational study, we employ the semi-synthetic data generating process outlined in Algorithm 1 of Crabbé et al. \cite{crabbe2022benchmarking}. This approach retains the realistic correlation structure of the original covariates while synthetically generating treatment assignments and outcomes. The algorithm explicitly selects non-overlapping subsets of prognostic covariates, which determine the outcome regardless of treatment, and predictive covariates, which determine the outcome per treatment. By using this generation process, the underlying conditional average treatment effect is known, allowing for a precise evaluation of the estimated rules.

Following the protocol detailed in Appendix, we partition the data into a disjoint $20\%$ tuning pool and $80\%$ evaluation pool, fix hyperparameters $(R_{\textup{method}}, \lambda_{\textup{method}})$ once on the tuning pool, and draw $100$ training subsets of size $1{,}140$ from the evaluation pool; each released estimator is individually $\varepsilon$-DP (resp.\ $(\varepsilon, \delta)$-DP) with respect to the evaluation pool. We follow the same evaluation protocol (privacy mechanisms, privacy budgets, weighting methods) as in Section~\ref{sec:simulation}, with accuracy computed against the optimal rule implied by the known semi-synthetic CATE. The reported DP guarantees are with respect to the evaluation pool; the tuning pool is treated as auxiliary data outside the formal guarantee. See Appendix for a discussion of preprocessing and a fully private alternative.

Table~\ref{tab:application} summarizes accuracy across privacy budgets. All methods improve as $\varepsilon$ increases, and the Gamma mechanism generally outperforms the Gaussian mechanism across methods and budgets, with rare exceptions at the tightest privacy budgets. At the tightest budget ($\varepsilon=0.01$), Gamma-based accuracies are broadly similar across methods (EBW $0.66$, MMD $0.73$, IPW $0.72$); notably, both IPW--Gamma and MMD--Gamma sit close to the all-control baseline ($0.71$), indicating that under severe noise, the learned rule is close to trivial. As $\varepsilon$ grows, EBW--Gamma and MMD--Gamma recover more strongly, each reaching $0.91$--$0.92$ at $\varepsilon=0.50$: EBW--Gamma effectively matches its non-private benchmark ($0.91$), while MMD--Gamma also stabilizes at $0.91$. IPW--Gamma improves more gradually and plateaus near $0.86$ by $\varepsilon=0.50$, essentially matching its own non-private benchmark ($0.86$). Variability across repetitions decreases as $\varepsilon$ increases, consistent with reduced perturbation at larger privacy budgets. Empirical-value results show a similar pattern; for example, at $\varepsilon=0.10$, EBW--Gamma achieves empirical value $3.79$, compared with $3.79$ for MMD--Gamma and $3.73$ for IPW--Gamma, all three exceeding the all-control baseline ($3.45$). Complete empirical-value results are reported in Appendix.

{\small
	\begin{table}[tbp]
		\centering
		\caption{Average accuracy and standard deviations (in parentheses) of DP-2ERM across privacy budgets ($\varepsilon$) on the Twins dataset. The non-private accuracy is shown in the final column. Hyperparameters are tuned once on a disjoint 20\% holdout pool, so each reported estimator is individually $\varepsilon$-DP or $(\varepsilon, \delta)$-DP with respect to the 80\% evaluation pool. The accuracies for the baselines---assigning all to treatment and assigning all to control---are 0.29 (0.00) and 0.71 (0.00), respectively.}
		\label{tab:application}
		\begin{tabular}{@{}lrrrrrrrr@{}}
			\hline
			& \multicolumn{8}{c}{Privacy Budget ($\varepsilon$)} \\
			\cline{2-9}
			Method &
			\multicolumn{1}{c}{0.01} &
			\multicolumn{1}{c}{0.05} &
			\multicolumn{1}{c}{0.10} &
			\multicolumn{1}{c}{0.50} &
			\multicolumn{1}{c}{1.00} &
			\multicolumn{1}{c}{5.00} &
			\multicolumn{1}{c}{10.00} &
			\multicolumn{1}{c@{}}{Non-Private} \\
			\hline
			{EBW } - Gamma& 0.67 (0.12) & 0.81 (0.08) & 0.87 (0.06) & 0.91 (0.04) & 0.91 (0.03) & 0.91 (0.03) & 0.91 (0.03) & 0.92 (0.03) \\
			{EBW} - Gaussian& 0.61 (0.14) & 0.69 (0.11) & 0.73 (0.10) & 0.82 (0.08) & 0.84 (0.05) & 0.89 (0.04) & 0.90 (0.04) & 0.92 (0.03) \\
			{IPW } - Gamma& 0.74 (0.09) & 0.84 (0.05) & 0.85 (0.03) & 0.86 (0.03) & 0.86 (0.03) & 0.86 (0.03) & 0.86 (0.03) & 0.90 (0.03) \\
			{IPW} - Gaussian& 0.66 (0.11) & 0.72 (0.09) & 0.76 (0.07) & 0.84 (0.05) & 0.85 (0.04) & 0.86 (0.04) & 0.86 (0.04) & 0.90 (0.03) \\
			{MMD } - Gamma& 0.74 (0.09) & 0.85 (0.04) & 0.86 (0.03) & 0.87 (0.03) & 0.87 (0.03) & 0.86 (0.03) & 0.86 (0.03) & 0.97 (0.01) \\
			{MMD} - Gaussian& 0.67 (0.10) & 0.75 (0.09) & 0.77 (0.07) & 0.84 (0.04) & 0.85 (0.04) & 0.86 (0.03) & 0.86 (0.03) & 0.97 (0.01) \\
			\hline
		\end{tabular}
	\end{table}
}

\section{Discussion}
This paper develops a framework for end-to-end differentially private estimation within two-stage architectures where a data-dependent first stage---designed to enforce distributional constraints such as covariate balance---feeds into a second-stage weighted empirical risk minimization. The framework rests on two main results: a sensitivity bound for objective perturbation under data-dependent reweighting and non-asymptotic stability bounds for the first-stage weight maps of three standard covariate-balancing schemes. Together, they avoid the conservatism of generic composition-based calibration. A practical implication is that the privacy--utility trade-off is driven not only by the privacy mechanism but also by the \emph{stability} of the weighting scheme: our theory and experiments suggest that optimization-based balancing weights can be substantially more stable than IPW, which translates into markedly improved private performance at the same privacy budget.

Our theory relies on conditions typical for objective perturbation—convexity of the loss, and strong convexity induced by regularization—together with boundedness/regularity assumptions used to control sensitivity. Extending DP-2ERM to nonconvex objectives (e.g., deep policy networks) will require different private optimization tools and sensitivity analyses, such as DP-SGD \cite{xia2025statistical} or output perturbation under alternative stability conditions. Moreover, limited overlap can amplify instability of weights and therefore increase the required noise; understanding the precise interaction between overlap diagnostics, weight trimming, and privacy calibration is an important direction for further study.

Several extensions appear promising. First, DP-2ERM can be adapted to richer decision-function classes beyond linear rules and to multi-stage decision problems such as dynamic treatment regimes \cite{kosorok2019precision}. Second, the same analysis blueprint applies to other 2ERM-style pipelines, including covariate-shift correction and transfer learning with balancing weights. Finally, it would be useful to develop private inference and uncertainty quantification for the learned rule \cite{cai2023private}.

\bibliographystyle{imsart-number}   
\bibliography{mybib}


\newpage
\begin{appendix}

\pagenumbering{arabic}
\setcounter{page}{1}

\section{Proofs of Key Lemmas and Main Results}

Omitted for the initial arXiv submission. 

\section{Proof of Stability of Covariate Balancing Weights}\label{sec:proof_stability_CBW}

Omitted for the initial arXiv submission.

\end{appendix}

\end{document}